\numberwithin{equation}{section}
\theoremstyle{plain}
\newtheorem{theorem}{Theorem}[section]
\newtheorem{proposition}[theorem]{Proposition}
\newtheorem{corollary}[theorem]{Corollary}
\newtheorem{definition}[theorem]{Definition}
\theoremstyle{definition}
\newtheorem{remark}[theorem]{Remark}
\begin{document}
\newcommand{\eq}{equation}
\newcommand{\real}{\ensuremath{\mathbb R}}
\newcommand{\rn}{\ensuremath{{\mathbb R}^n}}
\newcommand{\rp}{\ensuremath{{\mathbb R}^2}}
\newcommand{\nat}{\ensuremath{\mathbb N}}
\newcommand{\no}{\ensuremath{\nat_0}}
\newcommand{\ganz}{\ensuremath{{\mathbb Z}}}
\newcommand{\zn}{\ensuremath{\ganz^n}}
\newcommand{\comp}{\ensuremath{{\mathbb C}}}
\newcommand{\Pbm}{\ensuremath{{\mathbb P}}}
\newcommand{\hra}{\mbox{$ \ \hookrightarrow \ $}}
\newcommand{\di}{\ensuremath{{\rm d}}}
\newcommand{\id}{\mbox{\rm id}}
\newcommand{\supp}{\mbox{\rm supp }}
\newcommand{\ve}{\ensuremath{\varepsilon}}
\newcommand{\vp}{\ensuremath{\varphi}}
\newcommand{\vk}{\ensuremath{{ \kappa}}}
\newcommand{\vr}{\ensuremath{\varrho}}
\newcommand{\As}{\ensuremath{A^s_{p,q}}}
\newcommand{\Bs}{\mbox{$B^s_{p,q}$}}
\newcommand{\Fs}{\mbox{$F^s_{p,q}$}}
\newcommand{\Bt}{\mbox{$\widetilde{B}^s_{p,q}$}}
\newcommand{\Bc}{\mbox{$\overset{\, \circ}{B}{}^s_{p,q}$}}
\newcommand{\pa}{\ensuremath{\partial}}
\newcommand{\pr}{\pageref}
\newcommand{\Om}{\ensuremath{\Omega}}
\newcommand{\ext}{\mbox{\rm ext}}
\newcommand{\re}{\ensuremath{\mathrm{re}}}
\newcommand{\wtw}{\ensuremath{\widetilde{w}_{-\alpha}}}
\newcommand{\os}{\ensuremath{\overset}}
\newcommand{\ol}{\ensuremath{\overline}}
\newcommand{\Li}{\ensuremath{\overset{\circ}{L}}}
\newcommand{\Ai}{\ensuremath{\os{\, \circ}{A}}}
\newcommand{\Aa}{\ensuremath{\os{\, \ast}{A}}}
\newcommand{\Ba}{\ensuremath{\os{\, \ast}{B}}}
\newcommand{\Fa}{\ensuremath{\os{\, \ast}{F}}}
\newcommand{\ba}{\ensuremath{\os{\, \ast}{b}_{p,q}}}
\newcommand{\bb}{\ensuremath{\os{\, \ast}{b}_{1,1}}}
\newcommand{\bc}{\ensuremath{\os{\, \ast}{b}_{\infty, \infty}}}
\newcommand{\bd}{\ensuremath{\os{\, \ast}{b}_{p,p}}}
\newcommand{\fa}{\ensuremath{\os{\, \ast}{f}_{p,q}}}
\newcommand{\Aas}{\ensuremath{\Aa{}^s_{p,q}}}
\newcommand{\Bas}{\ensuremath{\Ba{}^s_{p,q}}}
\newcommand{\Fas}{\ensuremath{\Fa{}^s_{p,q}}}
\newcommand{\Ca}{\ensuremath{\os{\, \ast}{{\mathcal C}}}}
\newcommand{\Wa}{\ensuremath{\os{\, \ast}{W}}}
\newcommand{\Cas}{\ensuremath{\Ca{}^s}}
\newcommand{\Cc}{\ensuremath{{\mathcal C}}}
\newcommand{\dom}{\ensuremath{\mathrm{dom \,}}}
\newcommand{\bmo}{\ensuremath{\mathrm{bmo}}}
\newcommand{\BMO}{\ensuremath{\mathrm{BMO}}}
\newcommand{\tr}{\ensuremath{\mathrm{tr}}}
\newcommand{\Div}{\ensuremath{\mathrm{div \,}}}
\newcommand{\Pbb}{\ensuremath{\mathbb P}}
\newcommand{\wh}{\ensuremath{\widehat}}
\newcommand{\wt}{\ensuremath{\widetilde}}
\newcommand{\Lloc}{\ensuremath{L_1^{\mathrm{loc}}}}
\newcommand{\Sreg}{\ensuremath{S'(\rn)^{\mathrm{reg}}}}
\newcommand{\Real}{\ensuremath{{\mathrm{Re}}}}

\title{Tempered homogeneous function spaces, II}
\author{Hans Triebel}
\address{Institut f\"{u}r Mathematik, Fakult\"{a}t f\"{u}r Mathematik und Informatik, Friedrich-Schiller-Universit\"{a}t Jena, 07737 Jena, Germany}
\email{hans.triebel@uni-jena.de}
\subjclass[2010]{46E35, 42B35}
\keywords{Homogeneous function spaces}
\date{March, 7, 2016}

\begin{abstract}
This paper deals with homogeneous function spaces of Besov-Sobolev type within the framework of tempered distributions in Euclidean $n$-space based on
Gauss-Weierstrass semi-groups. Related Fourier-analytical descriptions are incorporated afterwards as so-called domestic norms. This approach avoids
the usual ambiguity modulo polynomials when homogeneous function spaces are considered in the context of homogeneous tempered distributions. 
The motivation to deal with these spaces comes from (nonlinear) heat and Navier-Stokes equations, but also from Keller-Segel systems and other PDE models of chemotaxis.
\end{abstract}

\maketitle

\setcounter{tocdepth}{2}

\tableofcontents

\section{Introduction and motivation}   \label{S1}
This paper is a complementing survey to \cite{T15}. We try to be selfcontained repeating some definitions and basic assertions. Let $\As (\rn)$, 
$n\in \nat$, $A \in \{B,F \}$ with
\begin{\eq}   \label{1.1}
0< p,q \le \infty \qquad \text{and} \qquad s\in \real
\end{\eq}
be the nowadays well-known {\em inhomogeneous function spaces} treated in the framework of the dual pairing $\big(S(\rn), S'(\rn) \big)$. The theory
of these spaces, including their history, references and special cases may be found in \cite{T83, T92, T06}. Here $S(\rn)$ is the usual Schwartz space
of infinitely differentiable rapidly decreasing functions in $\rn$ and $S'(\rn)$ is its dual, the space of tempered distributions. The related
{\em homogeneous function spaces} $\dot{A}^s_{p,q} (\rn)$ again with $A \in \{B,F \}$ and \eqref{1.1} are usually treated in the context of the dual 
pairing $\big( \dot{S} (\rn), \dot{S}' (\rn) \big)$ where
\begin{\eq}   \label{1.1a}
\dot{S} (\rn) = \big\{ \vp \in S(\rn): \ (D^\alpha \wh{\vp}) (0) =0, \ \alpha \in \nat^n_0 \big\}.
\end{\eq}
Recall that $\wh{\vp}$ is the Fourier transform of \vp.
A first brief account of these homogeneous spaces, again with references and special cases, had been given in \cite[Chapter 5]{T83}. A more detailed
elaborated and updated version of the theory of these homogeneous spaces may be found in \cite[Chapter 2]{T15}. It is well known that these homogeneous
spaces must be considered modulo polynomials. This causes some disturbing (topological) troubles if one uses homogeneous spaces in some applications,
especially in the context of nonlinear PDEs. In \cite{T15} we offered a theory of {\em tempered homogeneous  function spaces} $\Aas (\rn)$, $A \in \{B,F \}$ in the framework of the dual pairing $\big( S(\rn), S'(\rn) \big)$ where $p,q,s$ are restricted (mostly but not exclusively) to the {\em distinguished strip}
\begin{\eq}   \label{1.2}
0<p,q \le \infty, \qquad n \big( \frac{1}{p} - 1 \big) <s< \frac{n}{p}.
\end{\eq}
This avoids any ambiguity modulo polynomials and gives the possibility to transfer many properties of the inhomogeneous spaces to the corresponding
tempered homogeneous ones. One has, in addition, 
\begin{\eq}  \label{1.3}
\| f(\lambda \cdot) \, | \Aas (\rn) \| = \lambda^{s- \frac{n}{p}} \, \| f \, | \Aas (\rn) \|, \qquad 0<\lambda <\infty,
\end{\eq}
of {\em homogeneity order} $s - \frac{n}{p}$. One may think about the Lebesgue spaces $L_p (\rn) = \Fa{}^0_{p,2} (\rn) = F^0_{p,2} (\rn)$, $1<p<\infty$,
as a proto-type of these spaces. The restriction of $p,q,s$ to the indicated distinguished strip \eqref{1.2} maybe disturbing from the point of view of the theory of function spaces, but it is natural (some limiting cases will be incorporated below) and covers in particular homogeneous spaces which are
of interest in some applications, especially to a few distinguished nonlinear PDEs. In particular, our motivation to develop the theory of {\em 
tempered homogeneous spaces} $\Aas (\rn)$ originates from the Navier-Stokes equations 
\begin{align}   \label{1.4}
\pa_t  u - \Delta u + \Pbb \, \Div (u \otimes u) &=0 &&\text{in $\rn \times (0, T)$,}&& \\  \label{1.5}
u(\cdot, 0) &= u_0 &&\text{in \rn},&&
\end{align}
$n \ge 2$, $0<T \le \infty$. Here $u(x,t) = \big( u^1 (x,t), \ldots, u^n (x,t) \big)$, whereas $u \otimes u$ is the usual tensor product and $\Pbb$
is the Leray projector, a singular Calder\'{o}n-Zygmund operator of homogeneity order 0. We do not deal here with Navier-Stokes equations and refer the
reader for details, explanations and the abundant literature to \cite{BCD11, T13, T14}. Of interest for us is the following homogeneity assertion. If
$u(x,t)$ is a solution of \eqref{1.4} then $u_\lambda (x,t) = \lambda \, u(\lambda x, \lambda^2 t)$, $0<\lambda <\infty$, 
is also a solution of \eqref{1.4} now in $\rn
\times (\lambda^{-2} T)$ where one has to adapt the initial data. This fits pretty well to homogeneity assertions of type \eqref{1.2}, \eqref{1.3}. As
a consequence, spaces
\begin{\eq}   \label{1.6}
\As (\rn), \quad \dot{A}^s_{p,q} (\rn), \quad \Aas (\rn), \qquad 0<p,q \le \infty \quad \text{with} \quad s = -1 + \frac{n}{p}
\end{\eq}
are called {\em critical} (for Navier-Stokes equations), {\em supercritical} if $s> -1 + \frac{n}{p}$ and {\em subcritical} if $s<-1 + \frac{n}{p}$.
Details may be found in \cite[pp.\,1,2]{T15}. Of special interest are the critical and supercritical spaces with $-1 + \frac{n}{p} \le s <\frac{n}{p}$.
This fits in the scheme \eqref{1.3}. Our own approach to Navier-Stokes equations in \cite{T13, T14} is based on inhomogeneous supercritical spaces (and
is local in time). But most of the recent contributions to Navier-Stokes equations  in the context of Fourier analysis rely on homogeneous critical
spaces $\dot{A}^{-1 + \frac{n}{p}}_{p,q} (\rn)$. This applies also to other nonlinear PDEs originating from physics. The reader may consult \cite{BCD11}
and some more recent papers quoted in \cite{T13, T14}. An additional motivation to deal with tempered homogeneous function spaces comes from {\em
chemotaxis}, the movement of biological cells or organisms in response to chemical gradients. Detailed descriptions  including the biological background
and the model equations studied today may be found in the surveys \cite{Hor03, Hor04}, \cite{HiP09} and the recent book \cite{Per15}. In  addition to 
the classical approach (smooth functions in bounded domains) a lot of attention has been paid in recent times to study the underlying Keller-Segel
systems in the context of the above spaces $\As (\rn)$, $\dot{A}^s_{p,q} (\rn)$. The prototype of these equations is given by
\begin{align}   \label{1.7}
\pa_t u - \Delta u + \Div(u \nabla v) &=0, &&\text{$x\in \rn$, $0<t<T$}, && \\  \label{1.8}
\pa_t v - \Delta v + \alpha v &=u, &&\text{$x\in \rn$, $0<t<T$}, && \\ \label{1.9}
u(\cdot,0) &= u_0, &&\text{$x\in \rn$}, && \\  \label{1.10}
v(\cdot,0) &= v_0, &&\text{$x\in \rn$}, && 
\end{align}
where $\alpha \ge 0$ is the so-called damping constant. Here $u= u(x,t)$ and $v=v(x,t)$ are scalar functions describing the cell density and the 
concentration of the chemical signals, respectively. If $\alpha =0$ then one is in a similar position as in the case of the Navier-Stokes equations and
one can again ask which spaces $\As (\rn)$, $\dot{A}^s_{p,q} (\rn)$ should be called critical, supercritical and subcritical for Keller-Segel equations.
If $u(x,t)$, $v(x,t)$ is a solution of \eqref{1.7}, \eqref{1.8} then $u_\lambda (x,t) = \lambda^2 u(\lambda x, \lambda^2 t)$, $ v_\lambda (x,t) = 
v(\lambda x, \lambda^2 t)$ is also a solution of \eqref{1.7}, \eqref{1.8} now in $\rn \times (0, \lambda^{-2} T)$, where one has to adapt the initial 
data. Then the spaces
\begin{\eq}  \label{1.11}
\As (\rn), \quad \dot{A}^s_{p,q} (\rn), \quad \Aas (\rn), \qquad 0<p,q \le \infty, \quad s= -2 + \frac{n}{p},
\end{\eq}
are called {\em critical} (for Keller-Segel equations), {\em supercritical} if $s >-2 + \frac{n}{p}$ and {\em subcritical} if $s<-2 + \frac{n}{p}$. 
Of special interest are the critical and supercritical spaces with $-2 + \frac{n}{p} \le s \le \frac{n}{p}$. Again this seems to fit in the scheme of
\eqref{1.2}, \eqref{1.3}. Details may be found in \cite{T16}. But there are some differences compared with Navier-Stokes equations. Homogeneous spaces 
with \eqref{1.2} cover the cases of interest for Navier-Stokes equations in $\rn$ with $2\le n\in \nat$. Keller-Segel systems with $n=1$ (ordinary
nonlinear equations) attracted some attention but they are not covered by our approach in \cite{T16}. According to the literature the most interesting
case for Keller-Segel systems is $n=2$ (biological cells in so-called Petri dishes in response to chemicals). But then \eqref{1.11} (with $n=2$) suggests
to have a closer  look at (tempered) homogeneous spaces $\Aas (\rn)$, $0<p,q\le \infty$ in the limiting cases $s = n\big( \frac{1}{p} - 1 \big)$ and
$s = \frac{n}{p}$. It is one aim of this paper to complement some corresponding considerations in \cite{T15} in this direction. But on the other hand
it is our main aim to continue the study of the tempered homogeneous spaces $\Aas (\rn)$ mostly with \eqref{1.2} asking for further properties. We deal
also with some examples based on Riesz kernels demonstrating the decisive differences between the above homogeneous spaces and their (more flexible)
inhomogeneous counterparts $\As (\rn)$. 

In Section \ref{S2} we introduce the tempered homogeneous spaces $\Aas (\rn)$ following closely \cite{T15}. We repeat some basic assertions and prove
new ones (mostly in limiting situations). Section \ref{S3} deals with new properties of the spaces $\Aas (\rn)$ complementing \cite{T15}.

\section{Definitions and basic assertions}   \label{S2}
\subsection{Preliminaries and inhomogeneous spaces}   \label{S2.1}
We use standard notation. Let $\nat$ be the collection of all natural numbers and $\no = \nat \cup \{0 \}$. Let $\rn$ be Euclidean $n$-space, where
$n\in \nat$. Put $\real = \real^1$, whereas $\comp$ is the complex plane.
Let $S(\rn)$ be the Schwartz space of all complex-valued rapidly decreasing infinitely differentiable functions on $\rn$ and let $S' (\rn)$ be the space of all tempered distributions on $\rn$, the dual of $S(\rn)$. 
Let $D(\rn) = C^\infty_0 (\rn)$ be the collection of all functions $f\in S(\rn)$ with compact support in \rn. As usual $D' (\rn)$ stands for the space
of all distributions in \rn. Furthermore, $L_p (\rn)$ with $0< p \le \infty$, is the standard complex quasi-Banach space with respect to the Lebesgue measure, quasi-normed by
\begin{\eq}   \label{2.1}
\| f \, | L_p (\rn) \| = \Big( \int_{\rn} |f(x)|^p \, \di x \Big)^{1/p}
\end{\eq}
with the usual modification if $p=\infty$. 
Similarly $L_p (M)$ where $M$ is a Lebesgue-measurable subset of \rn. As usual $\ganz$ is the collection of all integers; and $\zn$ where $n\in \nat$ denotes the
lattice of all points $m= (m_1, \ldots, m_n) \in \rn$ with $m_k \in \ganz$. Let $Q_{j,m} = 2^{-j}m + 2^{-j} (0,1)^n$ with $j\in \ganz$ and $m\in \zn$ be the usual dyadic cubes in $\rn$,
$n \in \nat$, with sides of length $2^{-j}$ parallel to the axes of coordinates and $2^{-j} m$ as the lower left corner. As usual, $L_p^{\mathrm{loc}}
(\rn)$ collects all locally $p$-integrable functions $f$, that is $f\in L_p (M)$ for any bounded Lebesgue measurable set $M$ in \rn. 

If $\vp \in S(\rn)$ then
\begin{\eq}  \label{2.2}
\wh{\vp} (\xi) = (F \vp)(\xi) = (2\pi )^{-n/2} \int_{\rn} e^{-ix \xi} \vp (x) \, \di x, \qquad \xi \in  \rn,
\end{\eq}
denotes the Fourier transform of \vp. As usual, $F^{-1} \vp$ and $\vp^\vee$ stand for the inverse Fourier transform, given by the right-hand side of
\eqref{2.2} with $i$ in place of $-i$. Here $x \xi$ stands for the scalar product in \rn. Both $F$ and $F^{-1}$ are extended to $S'(\rn)$ in the
standard way. Let $\vp_0 \in S(\rn)$ with
\begin{\eq}   \label{2.3}
\vp_0 (x) =1 \ \text{if $|x|\le 1$} \quad \text{and} \quad \vp_0 (x) =0 \ \text{if $|x| \ge 3/2$},
\end{\eq}
and let
\begin{\eq}   \label{2.4}
\vp_k (x) = \vp_0 (2^{-k} x) - \vp_0 (2^{-k+1} x ), \qquad x\in \rn, \quad k\in \nat.
\end{\eq}
Since
\begin{\eq}   \label{2.5}
\sum^\infty_{j=0} \vp_j (x) =1 \qquad \text{for} \quad x\in \rn,
\end{\eq}
$\vp =\{ \vp_j \}^\infty_{j=0}$ forms a dyadic resolution of unity. The entire analytic functions $(\vp_j \wh{f} )^\vee (x)$ make sense pointwise in $\rn$ for any $f\in S'(\rn)$.

We recall the well-known definitions of the {\em inhomogeneous spaces} $\Bs (\rn)$ and $\Fs(\rn)$.
\\[0.1cm]
(i) {\em Let}
\begin{\eq}   \label{2.6}
0<p \le \infty, \qquad 0<q \le \infty, \qquad s \in \real.
\end{\eq}
{\em Then $\Bs (\rn)$ is the collection of all $f \in S' (\rn)$ such that}
\begin{\eq}   \label{2.7}
\| f \, | \Bs (\rn) \|_{\vp} = \Big( \sum^\infty_{j=0} 2^{jsq} \big\| (\vp_j \widehat{f})^\vee \, | L_p (\rn) \big\|^q \Big)^{1/q} < \infty
\end{\eq}
({\em with the usual modification if $q= \infty$}). 
\\[0.1cm]
(ii) {\em Let}
\begin{\eq}   \label{2.8}
0<p<\infty, \qquad 0<q \le \infty, \qquad s \in \real.
\end{\eq}
{\em Then $F^s_{p,q} (\rn)$ is the collection of all $f \in S' (\rn)$ such that}
\begin{\eq}   \label{2.9}
\| f \, | F^s_{p,q} (\rn) \|_{\vp} = \Big\| \Big( \sum^\infty_{j=0} 2^{jsq} \big| (\vp_j \wh{f})^\vee (\cdot) \big|^q \Big)^{1/q} \big| L_p (\rn) \Big\| < \infty
\end{\eq}
({\em with the usual modification if $q=\infty$}).
\\[0.1cm]
(iii) {\em Let $0<q\le \infty$, $s\in \real$. Then $F^s_{\infty,q} (\rn)$ is the collection of all $f\in S'(\rn)$ such that}
\begin{\eq}   \label{2.10}
\| f \, |\ F^s_{\infty,q} (\rn) \|_{\vp} = \sup_{J\in \no, M\in \zn} 2^{Jn/q} \Big(\int_{Q_{J,M}} \sum_{j \ge J} 2^{jsq} \big| (\vp_j \wh{f} )^\vee (x) \big|^q \, \di x \Big)^{1/q} <\infty
\end{\eq}
({\em with the usual modification if $q=\infty$}).

\begin{remark}   \label{R2.1}
As usual $\As (\rn)$ with $A \in \{B,F \}$ means $\Bs (\rn)$ and $\Fs (\rn)$. The theory of these {\em inhomogeneous spaces} including special cases and
their history may be found in \cite{T83, T92, T06}. The above definition of $F^s_{\infty,q} (\rn)$ goes back to \cite[Section
12, (12.8), p.\,133]{FrJ90}, where
\begin{\eq}   \label{2.11}
F^s_{\infty, \infty} (\rn) = B^s_{\infty, \infty} (\rn), \qquad s \in \real.
\end{\eq}
We do not deal in this paper with these inhomogeneous spaces. We only mention that they are independent of $\vp$ (equivalent quasi-norms). This
justifies to write $\As (\rn)$ instead of $\As (\rn)_{\vp}$. But it is crucial for our later considerations that the above Fourier-analytical 
definitions of $\As (\rn)$ can be replaces by definitions in terms of heat kernels. We give a description.
\end{remark}

Let $w\in S'(\rn)$. Then
\begin{\eq}   \label{2.12}
W_t w(x) = \frac{1}{(4 \pi t)^{n/2}} \int_{\rn} e^{- \frac{|x-y|^2}{4t}} w(y) \, \di y = \frac{1}{(4 \pi t)^{n/2}} \Big( w, e^{- \frac{|x-\cdot|^2}{4t}}
\Big), \quad t>0,
\end{\eq}
$x\in \rn$, is the well-known Gauss-Weierstrass semi-group which can be written on the Fourier side as
\begin{\eq}   \label{2.13}
\wh{W_t w}(\xi) = e^{-t |\xi|^2} \wh{w}(\xi), \qquad \xi \in \rn, \quad t>0.
\end{\eq}
The Fourier transform is taken with respect to the space variables $x\in \rn$. Of course, both \eqref{2.12}, \eqref{2.13} must be interpreted in the    context of $S'(\rn)$. But we recall that \eqref{2.12} makes sense pointwise: It is the convolution of $w\in S' (\rn)$ and $g_t (y) = (4\pi t)^{-n/2} e^{-\frac{|y|^2}{4t}} \in S(\rn)$. In particular,
\begin{\eq}   \label{2.14}
w \ast g_t \in C^\infty (\rn), \qquad |(w \ast g_t )(x)| \le c_t \big(1+|x|^2 \big)^{N/2}, \quad x\in \rn,
\end{\eq}
for some $c_t >0$ and, some $N \in \nat$. Further explanations and related references may be found in \cite[Section 4.1]{T14}. 

We give a description how the above Fourier-analytical definition of $\As (\rn)$ with $s<0$ can be replaced by corresponding characterizations in terms
of heat kernels. Let
\begin{\eq}   \label{2.15}
s<0 \quad \text{and} \quad 0<p,q\le \infty \qquad \text{(with $p<\infty$ for $F$-spaces)}.
\end{\eq}
Then
\begin{\eq}   \label{2.16}
\| f \, | \Bs (\rn) \| = \Big( \int^1_0 t^{-\frac{sq}{2}} \| W_t f \, | L_p (\rn) \|^q \, \frac{\di t}{t} \Big)^{1/q}
\end{\eq}
and
\begin{\eq}   \label{2.17}
\| f \, |F^s_{p,q} (\rn) \| \sim \Big\| \Big( \int^1_0 t^{-\frac{sq}{2}} \big| W_t f(\cdot)\big|^q \frac{\di t}{t} \Big)^{1/q} | L_p (\rn) \Big\|
\end{\eq}
(usual modification if $q=\infty$) are {\em admissible}  (characterizing) equivalent quasi-norms in the respective spaces.
More precisely: $f\in S'(\rn)$ belongs to $\Bs (\rn)$ if, and only if, the right-hand of \eqref{2.16} is finite. Similarly
for $F^s_{p,q} (\rn)$ with $p<\infty$. This is essentially covered by
\cite[Theorem, p.\,152]{T92}. Additional explanations may be found in \cite[p.\,14]{T15}. If $s<0$, $p=\infty$ and $0<q\le \infty$ then \eqref{2.17}
must be replaced by
\begin{\eq}  \label{2.18}
\| f \, | F^s_{\infty,q} (\rn) \| =  \sup_{x\in \rn, 0<t<1} \Big( t^{-n/2} \int^t_0 \int_{|x-y| \le \sqrt{t}} \tau^{-sq/2} |W_\tau f(y)|^q \, \di y \, \frac{\di \tau}{\tau} \Big)^{1/q}.
\end{\eq}
It is again an {\em admissible} equivalent quasi-norm: $f \in S'(\rn)$ belongs to $F^s_{\infty,q} (\rn)$ if, and only if, the right-hand side of 
\eqref{2.18} is finite. This is covered by \cite{Ryc99} which in turn is based on \cite{BPT96, BPT97}. Here \eqref{2.18} with $q=\infty$ means
\begin{\eq}   \label{2.19}
\| f \, | F^s_{\infty, \infty} (\rn) \| = \sup_{x \in \rn, 0<\tau <t<1} t^{-n/2} \tau^{-s/2} \, \int_{|x-y| \le \sqrt{t}} | W_\tau f(y) | \, \di y.
\end{\eq}
One has always
\begin{\eq}  \label{2.20}
B^s_{p,p} (\rn) = F^s_{p,p} (\rn), \qquad s<0, \quad 0<p\le \infty.
\end{\eq}
This well-known assertion follows obviously from \eqref{2.7} compared with \eqref{2.9} or \eqref{2.16} compared with \eqref{2.17} if $p<\infty$. The 
case $p=\infty$ is covered by \eqref{2.11}. But the resulting equivalence of \eqref{2.19} and \eqref{2.16} with $p= q= \infty$ is a special case of
the equivalence
\begin{\eq}  \label{2.21}
\begin{aligned}
\| f \, | B^s_{\infty, \infty} (\rn) \| &= \sup_{x\in \rn, 0<t<1} t^{-\frac{s}{2}} \big| W_t f(x) \big| \\
&\sim \sup_{x\in \rn, 0<\tau<t<1} \tau^{- \frac{s}{2}} \Big( t^{-\frac{n}{2}} \int_{|x-y| \le  \sqrt{t}} |W_\tau f(y) |^r \di y \Big)^{1/r}
\end{aligned}
\end{\eq}
for any $r$, $0<r<\infty$. The assertion itself goes back to \cite{BuiT00}. A short direct proof (in the framework of tempered homogeneous spaces) may
be found in \cite[p.\,47/48]{T15}.

Usually one introduces the inhomogeneous spaces $\As (\rn)$, $A \in \{ B,F \}$, in terms of Fourier-analytical decompositions  according to 
\eqref{2.6}--\eqref{2.10}. But thermic (caloric, in terms of Gauss-Weierstrass semi-groups) and also harmonic (in terms of Cauchy-Poisson semi-groups)
characterizations of these spaces have a long history beginning with \cite{Tai64, Tai65}. In \cite[p.\,13]{T15} we collected some further relevant
papers and books. This will not be repeated here. But we wish to mention \cite{BuiC16} which is the most recent contribution to this field of research
characterizing homogeneous spaces $\dot{A}^s_{p,q} (\rn)$ in terms of Cauchy-Poisson semi-groups discussing to which extent the ambiguity modulo
polynomials can be avoided (especially if $s<n/p$). 

The main reason for inserting the above material is the following. Let $s<0$. Then it does not matter of whether one introduces the spaces $\As (\rn)$
Fourier-analytically as in \eqref{2.6}--\eqref{2.10} or in terms of heat kernels according to \eqref{2.15}--\eqref{2.18}. All quasi-norms are 
{\em admissible} (or characterizing) in the following sense: Let $A(\rn)$ be a quasi-Banach spaces with
\begin{\eq}   \label{2.22}
A(\rn) \hra S'(\rn) \qquad \text{(continuous embedding)}.
\end{\eq}
An (equivalent) quasi-norm $\| \cdot \, | A(\rn) \|$ is called {\em admissible} if it makes sense to test any $f\in S'(\rn)$ for whether it belongs
to the corresponding space $A(\rn)$ or not (which means whether the related quasi-norm is finite or infinite). Within a given fixed quasi-Banach space
$A(\rn)$ further equivalent quasi-norms are called {\em domestic}. A simple but nevertheless illuminating example is
\begin{\eq}   \label{2.23}
L_p (\rn) = F^0_{p,2} (\rn), \qquad 1<p<\infty.
\end{\eq}
Then $\| \cdot \, | F^0_{p,2} (\rn) \|_{\vp}$ according to \eqref{2.9} is an admissible norm, whereas \eqref{2.1} is a domestic norm. We discussed this
point in greater detail in \cite[Section 1.3, pp.\,5/6]{T15}. 

If one switches from inhomogeneous spaces to homogeneous spaces then there is a decisive difference between the Fourier-analytical homogeneous 
counterparts of \eqref{2.6}--\eqref{2.10} on the one hand and the possibility to introduce related homogeneous spaces in the framework of $S'(\rn)$
in terms of admissible quasi-norms based on heat kernels. Preference is given to the homogeneous counterparts of \eqref{2.16}--\eqref{2.18} first with
$s,p,q$ as in \eqref{2.15} extended afterwards to the distinguished strip \eqref{1.2}, incorporating some limiting cases. The Fourier-analytical versions
are domestic quasi-norms in the related spaces. This is sufficient to work with them in applications to Navier-Stokes or Keller-Segel equations now on
the safe topological ground of $S'(\rn)$. This was the main aim of \cite{T15} and will be continued now. We take over those definitions and basic
assertions from \cite{T15} needed later on. In this sense we try to make this paper independently readable.

\subsection{Spaces with negative smoothness}   \label{S2.2}
First we complement \cite[Section 3.1]{T15} where we introduced the tempered homogeneous spaces $\Aas (\rn)$ with $s<0$ in the framework of the dual
pairing $\big(S (\rn), S' (\rn) \big)$ based on \eqref{2.12}--\eqref{2.14}. For sake of completeness  we repeat some definitions and basic assertions.

\begin{definition}   \label{D2.2}
{\upshape (i)} Let $s<0$ and $0<p,q \le \infty$. Then $\Bas (\rn)$ collects all $f\in S' (\rn)$ such that
\begin{\eq}   \label{2.24}
\| f \, | \Bas (\rn) \| = \Big( \int^\infty_0 t^{-sq/2} \| W_t f \, | L_p (\rn) \|^q \, \frac{\di t}{t} \Big)^{1/q}
\end{\eq}
is finite $($usual modification if $q= \infty)$.
\\[0.1cm]
{\upshape (ii)} Let $s<0$, $0<p<\infty$, $0<q\le \infty$. Then $\Fas (\rn)$ collects all $f\in S' (\rn)$ such that
\begin{\eq}   \label{2.25}
\| f \, | \Fas (\rn) \| = \Big\| \Big( \int^\infty_0 t^{-sq/2} \big| W_t f(\cdot) \big|^q \, \frac{\di t}{t} \Big)^{1/q} \, | L_p (\rn) \Big\|
\end{\eq}
is finite $($usual modification if $q=\infty)$.
\\[0.1cm]
{\upshape (iii)} Let $s<0$ and $0<q \le \infty$. Then $\Fa{}^s_{\infty,q} (\rn)$ collects all $f\in S' (\rn)$ such that
\begin{\eq}   \label{2.26}
\| f \, | \Fa{}^s_{\infty,q} (\rn) \| = \sup_{x\in \rn, t>0} \Big( t^{-n/2} \int^t_0 \int_{|x-y| \le \sqrt{t}} \tau^{-sq/2} \big| W_\tau f(y) \big|^q
\, \di y \, \frac{\di \tau}{\tau} \Big)^{1/q}
\end{\eq}
is finite $($modification if $q=\infty$ as explained below$)$.
\end{definition}

\begin{remark}   \label{R2.3}
This coincides with \cite[Definition 3.1, p.\,46]{T15}. There one finds also a few further discussions and explanations. As usual $\Aas (\rn)$ with
$A \in \{B,F \}$ means $\Bas (\rn)$ and $\Fas (\rn)$. Quite obviously, \eqref{2.24}--\eqref{2.26} is the homogeneous counterpart of the {\em admissible}
(characterizing) quasi-norms \eqref{2.16}--\eqref{2.19} of the related {\em inhomogeneous} spaces $\As (\rn)$, including the interpretation of 
\eqref{2.26} with $q=\infty$,
\begin{\eq}   \label{2.27}
\| f \, | \Fa{}^s_{\infty, \infty} (\rn) \| = \sup_{x\in \rn, 0<\tau <t} t^{-n/2} \tau^{-s/2} \int_{|x-y| \le \sqrt{t}} | W_\tau f(y) | \, \di y.
\end{\eq}
Furthermore there are counterparts of \eqref{2.20} and \eqref{2.21}, that is
\begin{\eq}  \label{2.28}
\Ba{}^s_{p,p} (\rn) = \Fa{}^s_{p,p} (\rn), \qquad s<0, \quad 0<p\le \infty
\end{\eq}
and
\begin{\eq}  \label{2.29}
\begin{aligned}
\| f \, | \Ba{}^s_{\infty, \infty} (\rn) \| &= \sup_{x\in \rn, t>0} t^{-\frac{s}{2}} \big| W_t f(x) \big| \\
&\sim \sup_{x\in \rn, 0<\tau<t} \tau^{- \frac{s}{2}} \Big( t^{-\frac{n}{2}} \int_{|x-y| \le  \sqrt{t}} |W_\tau f(y) |^r \di y \Big)^{1/r}
\end{aligned}
\end{\eq}
for any $r$ with $0<r<\infty$. A short direct proof may be found in \cite[p.\,47]{T15}.
\end{remark}

We formulate some further properties of the spaces $\Aas (\rn)$ following again \cite{T15}.  Let $A(\rn)$ be a quasi-normed
space in $S' (\rn)$ with $A(\rn) \hra S'(\rn)$. Then $A(\rn)$ is said to have the {\em Fatou property} if there is a positive constant $c$ such that
from
\begin{\eq}   \label{2.30}
\sup_{j \in \nat} \| g_j \, | A(\rn) \| < \infty \qquad \text{and} \qquad g_j \to g \quad \text{in $S' (\rn)$}
\end{\eq}
it follows $g\in A(\rn)$ and
\begin{\eq}   \label{2.31}
\| g \, | A(\rn) \| \le c \, \sup_{j \in \nat} \| g_j \, | A(\rn) \|.
\end{\eq}
Further explanations may be found in \cite[p.\,48]{T15}. We need the homogeneous counterpart of the (inhomogeneous) dyadic resolution of unity according
\eqref{2.3}--\eqref{2.5}. Let $\vp_0$ be as in \eqref{2.3} and
\begin{\eq}   \label{2.32}
\vp^j (x) = \vp_0 (2^{-j} x) - \vp_0 (2^{-j+1} x), \qquad x\in \rn, \quad j \in \ganz.
\end{\eq}
Then
\begin{\eq}   \label{2.33}
\sum_{j\in \ganz} \vp^j (x) =1 \qquad \text{for} \quad x \in \rn \setminus \{0 \}.
\end{\eq}
Recall that $\hra$ means continuous embedding. 

\begin{theorem}   \label{T2.4}
{\upshape (i)}
The spaces $\Aas (\rn)$ with $s<0$ and $0<p,q \le \infty$ according to Definition \ref{D2.2} are quasi-Banach spaces $($Banach spaces if $p\ge 1$, $q
\ge 1)$. They have the Fatou property. Furthermore,
\begin{\eq}   \label{2.34}
\Aas (\rn) \hra \As (\rn) \hra S' (\rn)
\end{\eq}
and
\begin{\eq}   \label{2.35}
\| f (\lambda \cdot) \, | \Aas (\rn) \| = \lambda^{s- \frac{n}{p}} \| f \, | \Aas (\rn) \|, \qquad \lambda >0.
\end{\eq}
{\upshape (ii)}
Let
\begin{\eq}   \label{2.36}
s<0 \quad \text{and} \quad 0<p,q\le \infty \qquad \text{$($with $p<\infty$ for $F$-spaces$)$}.
\end{\eq}
Then
\begin{\eq}   \label{2.37}
\Big(\sum^\infty_{j=-\infty} 2^{jsq} \| (\vp^j \wh{f} )^\vee | L_p (\rn) \|^q \Big)^{1/q}, \qquad f\in \Bas (\rn),
\end{\eq}
are equivalent {\em domestic} quasi-norms in $\Bas (\rn)$ and
\begin{\eq}   \label{2.38}
\Big\| \Big( \sum^\infty_{j=-\infty} 2^{jsq} \big| (\vp^j \wh{f} )^\vee (\cdot) \big|^q \Big)^{1/q} | L_p (\rn) \Big\|, \qquad f \in \Fas (\rn),
\end{\eq}
are equivalent {\em domestic} quasi-norms in $\Fas (\rn)$.
\end{theorem}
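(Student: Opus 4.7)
For part (i), the quasi-norm axioms for (2.24)--(2.26) are inherited from those of $L_p(\rn)$ and of $L_q(dt/t)$, so the substantive content is the embedding (2.34), the Fatou property, completeness, and the homogeneity (2.35). The embedding comes essentially for free: restricting $\int_0^\infty dt/t$ in (2.24)--(2.26) to $(0,1)$ reproduces precisely the admissible heat-kernel quasi-norms (2.16)--(2.18) of $\As(\rn)$, so $\|f\,|\,\As(\rn)\|\le\|f\,|\,\Aas(\rn)\|$, and composition with the classical embedding $\As(\rn)\hookrightarrow S'(\rn)$ yields (2.34) and in particular injectivity of the quasi-norm. The Fatou property would follow from the pointwise convergence $W_\tau g_j(y)=(g_j,g_\tau(y-\cdot))\to W_\tau g(y)$ ensured by $g_j\to g$ in $S'(\rn)$, together with Fatou's lemma applied successively in the spatial variable and in $t$ to (2.24)--(2.26); completeness is then the standard deduction from (2.34) and Fatou. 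The homogeneity (2.35) reduces to the identity $W_t(f(\lambda\cdot))(x)=(W_{\lambda^2 t}f)(\lambda x)$, immediate from (2.13), followed by the substitution $u=\lambda^2 t$.

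For part (ii), one must establish the equivalence of the heat-kernel quasi-norms (2.24)/(2.25) with the Fourier-analytic expressions (2.37)/(2.38) built from the homogeneous resolution $\{\vp^j\}_{j\in\ganz}$. The equivalence is necessarily only \emph{domestic}: every polynomial gives zero in (2.37)/(2.38), so these expressions cannot be admissible on $S'(\rn)$, and the proof must proceed inside the already-defined quasi-Banach space $\Aas(\rn)$. The strategy I would use mirrors the classical inhomogeneous equivalence of (2.16)/(2.17) with (2.7)/(2.9) from \cite{T92,T06}, extended to the full two-sided dyadic range. For the direction $(2.37)/(2.38)\le c\,\|\cdot\,|\,\Aas(\rn)\|$, set $t_j=2^{-2j}$ and factor
\[
(\vp^j\wh f)^\vee = m_j(D)\,W_{t_j}f,\qquad m_j(\xi)=\vp^j(\xi)\,e^{t_j|\xi|^2}.
\]
After the dyadic rescaling $\xi\mapsto 2^j\xi$ the $m_j$ form a uniformly bounded subset of $S(\rn)$ supported in a fixed annulus, so $m_j(D)$ acts uniformly boundedly on $L_p(\rn)$ and satisfies the vector-valued Peetre-type maximal inequality required for the $F$-case. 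Estimating $\|(\vp^j\wh f)^\vee\,|\,L_p(\rn)\|$ (respectively the $L_p$-norm of the corresponding maximal function) by a constant times $\|W_tf\,|\,L_p(\rn)\|$ for any $t\in[t_j,4t_j)$, then integrating and summing against $2^{jsq}$, finishes this direction.

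For the reverse inequality, (2.33) yields $W_tf=\sum_{k\in\ganz}(e^{-t|\cdot|^2}\vp^k\wh f)^\vee$; for $t\in[t_j,4t_j)$ the multiplier $e^{-t|\xi|^2}\vp^k(\xi)$ is, after dyadic rescaling, uniformly bounded in the appropriate multiplier sense, with an additional exponential factor $e^{-c\,2^{2(k-j)_{+}}}$ from the Gaussian tail when $k>j$. This produces, pointwise and using the Peetre maximal function in the $F$-case, an estimate of $|W_tf|$ by a rapidly convergent convolution $\sum_k e^{-c\,2^{2(k-j)_{+}}}|(\vp^k\wh f)^\vee|$, from which the heat-kernel quasi-norm is recovered by integrating $t^{-sq/2}$ against $dt/t$, summing in $j$, and performing the $\ell_q$-convolution in $k-j$. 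The main obstacle is the contribution of the negative dyadic indices $k\to-\infty$, which is absent in the inhomogeneous theory: the resolution (2.33) holds only on $\rn\setminus\{0\}$, and the low-frequency terms $(\vp^k\wh f)^\vee$ are not controlled by Gaussian-tail estimates. Precisely here the assumption $s<0$ is decisive: the weight $t^{-sq/2}=t^{|s|q/2}$ grows at $t\to\infty$, so finiteness of (2.24)/(2.25) forces the requisite decay of $W_tf$ in $L_p(\rn)$ as $t\to\infty$, which in turn controls the low-frequency Fourier tail via nondegeneracy of $e^{-t|\cdot|^2}\vp^k$ on the relevant annuli.
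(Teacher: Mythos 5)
Your overall strategy is sound and is essentially the standard one; note that the paper itself contains no proof of Theorem \ref{T2.4} but defers entirely to \cite[Theorem 3.3, pp.\,48/49]{T15} (see Remark \ref{R2.5}), and your sketch is a faithful reconstruction of that argument: part (i) via restriction of $\int_0^\infty$ to $(0,1)$ to obtain \eqref{2.34}, pointwise convergence of $W_\tau g_j$ plus Fatou's lemma for the Fatou property and completeness, and the identity $W_t\big(f(\lambda\cdot)\big)(x)=(W_{\lambda^2 t}f)(\lambda x)$ for \eqref{2.35}; part (ii) via the two-sided multiplier comparison between the heat decomposition and the Littlewood--Paley decomposition $\{\vp^j\}_{j\in\ganz}$, with the correct identification that $s<0$ is what tames the low-frequency terms $k\to-\infty$ (equivalently, the factor $2^{(j-k)s}$ in the resulting $\ell_q$-convolution is summable over $k\le j$ precisely because $s<0$, while the Gaussian factor handles $k>j$).

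The one place where your sketch glosses over a real technical point is the claim that $m_j(D)$ ``acts uniformly boundedly on $L_p(\rn)$'': for $0<p<1$ convolution with a Schwartz function is not bounded on $L_p(\rn)$, and $\widehat{W_{t_j}f}$ is not compactly supported, so the scalar multiplier theorem for band-limited functions does not apply directly. The Peetre-type maximal inequalities you invoke are needed not only in the $F$-case but also in the $B$-case whenever $p<1$ (and similarly for the comparison $\|W_{2t}f\,|L_p(\rn)\|\le c\,\|W_tf\,|L_p(\rn)\|$ implicit in discretizing the $t$-integral); this is exactly the local-means machinery of \cite{BPT96, BPT97, Ryc99} on which the inhomogeneous characterizations \eqref{2.16}--\eqref{2.18} already rest, so it is a citable gap rather than a fatal one.
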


\begin{remark}   \label{R2.5}
This coincides with \cite[Theorem 3.3, p.\,48/49]{T15}. There one finds also a detailed proof. For the spaces $\Fa{}^s_{\infty,q} (\rn)$ one has a
counterpart of \eqref{2.10} with $J\in \ganz$ in place of $J\in \no$ (equivalent {\em domestic} quasi-norms). This is covered by \cite[(2.32), (2.54),
pp.\,12, 17]{T15}. We explained at the end of Section \ref{S2.1} what is meant by (equivalent) {\em domestic} quasi-norms in a given space. But this
can also be applied to a fixed family (or community) of spaces $\Aas (\rn)$ and also to (linear and nonlinear) mappings between members of this family.
At the beginning and at the end one relies on admissible (characterizing, defining) quasi-norms working in between with suitable domestic quasi-norms.
There is no need to bother any longer about ambiguities modulo polynomials as in the framework of the dual pairing $\big( \dot{S}(\rn), \dot{S}' (\rn)
\big)$. We refer again for greater details to \cite[Section 1.3]{T15}.
\end{remark}

We are mainly interested in quasi-Banach spaces $A(\rn)$ in the framework of the dual pairing $\big( S(\rn), S' (\rn) \big)$ such that
\begin{\eq}  \label{2.39}
S(\rn) \hra A(\rn) \hra S' (\rn).
\end{\eq}
Recall that $\hra$ means linear continuous embedding. So far we have \eqref{2.34}. The motivation to ask for $S(\rn) \hra A(\rn)$ comes again from 
applications. Admitted initial data for Navier-Stokes equations \eqref{1.4}, \eqref{1.5} or for Keller-Segel equations \eqref{1.7}--\eqref{1.10} are
nowadays  quite often assumed to be elements of (homogeneous) spaces $A(\rn)$ in the framework of $\big( S(\rn), S'(\rn) \big).$ Then the left-hand
side of \eqref{2.39} ensures that functions belonging to $D(\rn)$ or $S(\rn)$ are admitted initial data what seems to be a minimal request. A first
step in this direction for the spaces $\Aas (\rn)$ as introduced in Definition \ref{D2.2} has been done in \cite[Theorem 3.5, p.\,52]{T15}. This will
now be complemented having in particular a closer look at limiting spaces (again motivated by applications as described at the end of Section \ref{S1}
in connection with Keller-Segel equations especially in the plane $\real^2$).

Let $\Aas (\rn)$ with $A \in \{B,F \}$ be the spaces introduced in Definition \ref{D2.2}. Recall $D(\rn) = C^\infty_0 (\rn)$ (compactly supported
$C^\infty$-functions in $\rn$).

\begin{theorem}   \label{T2.6}
Let $n\in \nat$, $0<p\le \infty$, $0<q\le \infty$ and $s<0$. Then
\begin{\eq}  \label{2.40}
S(\rn) \hra \Bas (\rn)
\end{\eq}
if, and only if,
\begin{\eq}  \label{2.41}
1<p \le \infty, \qquad
\begin{cases}
s> n \big( \frac{1}{p} - 1 \big), &0<q \le \infty, \\
s= n \big(\frac{1}{p} - 1 \big), &q=\infty,
\end{cases}
\end{\eq}
and
\begin{\eq}  \label{2.42}
S(\rn) \hra \Fas (\rn)
\end{\eq}
if, and only if,
\begin{\eq}   \label{2.43}
1<p \le \infty, \qquad s>n \big( \frac{1}{p} - 1 \big), \qquad 0<q \le \infty.
\end{\eq}
This assertion remains valid if one replaces $S(\rn)$ by $D(\rn)$.
\end{theorem}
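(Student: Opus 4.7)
The approach I would take is to work directly with the heat-kernel quasi-norms \eqref{2.24}--\eqref{2.26} and to analyse the large-$t$ behaviour of $\|W_tf \, | L_p(\rn)\|$ (respectively of $|W_tf(x)|$) for $f \in S(\rn)$. The whole proof is driven by the elementary observation that for $f \in S(\rn)$ with $m := \int_{\rn} f(y)\,\di y \neq 0$ one has, uniformly on $|x| \le \sqrt{t}$,
\[ W_tf(x) = m(4\pi t)^{-n/2} e^{-|x|^2/4t}\,(1+o(1)), \qquad t \to \infty, \]
so that the model profile at large $t$ is the Gauss kernel of mass $m$.

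For sufficiency I would use Young's inequality, which for $1 \le p \le \infty$ and $t \ge 1$ gives $\|W_tf \, | L_p(\rn)\| \le \|f \, | L_1(\rn)\|\,\|g_t \, | L_p(\rn)\| \le C_f\, t^{-n(1-1/p)/2}$, combined with the contractivity $\|W_tf \, | L_p(\rn)\| \le \|f \, | L_p(\rn)\|$ for $t \le 1$. Plugging into \eqref{2.24} the $\int_0^1$-piece is harmless since $s<0$, while the $\int_1^\infty$-piece has integrand of order $t^{q(n(1/p-1)-s)/2-1}$, hence finite exactly when $s > n(1/p-1)$ (strict for $q<\infty$, non-strict for $q=\infty$). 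For $\Fas(\rn)$ with $p<\infty$ I would carry out the same argument pointwise: the Schwartz decay of $f$ yields $|W_tf(x)| \le C_N (1+|x|)^{-N}$ for $t \le 1$ and $|W_tf(x)| \le C\, t^{-n/2} e^{-c|x|^2/t}$ for $t \ge 1$ (obtained by splitting $f * g_t$ along $|y| \le |x|/2$ and $|y| \ge |x|/2$). Inserting these bounds, the $t$-integral in \eqref{2.25} is controlled by $(1+|x|)^{-(s+n)}$, whose $L_p$-norm is finite iff $p(s+n) > n$, i.e.\ $s > n(1/p-1)$.

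For necessity I would test the quasi-norm against any $f \in D(\rn)$ with $f \ge 0$ and $f \not\equiv 0$. The asymptotic above yields the matching lower bound $\|W_tf \, | L_p(\rn)\| \ge c\, t^{-n(1-1/p)/2}$ for $1 \le p \le \infty$ and the \emph{growth} $\|W_tf \, | L_p(\rn)\| \ge c\, t^{n(1/p-1)/2}$ for $0<p<1$. Substituting into \eqref{2.24}--\eqref{2.26}, the large-$t$ integral diverges whenever $p \le 1$ (thereby forcing $p > 1$), and, for $p>1$, whenever $s < n(1/p-1)$ or ($s = n(1/p-1)$ and $q<\infty$) in the $B$-case, respectively whenever $s \le n(1/p-1)$ in the $F$-case. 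This matches exactly the exceptions in \eqref{2.41} and \eqref{2.43}. Since the chosen $f$ lies in $D(\rn) \subset S(\rn)$, the equivalence transfers verbatim from $S(\rn)$ to $D(\rn)$.

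The main obstacle is the $F$-case at $p = \infty$, where \eqref{2.26} is not of $L_p(\ell_q)$-type and the pointwise recipe above no longer suffices. There one must estimate the parabolic average
\[ t^{-n/2} \int_0^t \!\int_{|x-y| \le \sqrt{t}} \tau^{-sq/2} |W_\tau f(y)|^q \, \di y \, \frac{\di \tau}{\tau} \]
uniformly in $x$ and $t$, by splitting the $x$-range into $|x| \le 2\sqrt{t}$ (using the uniform bound $|W_\tau f(y)| \le C\, \tau^{-n/2}$) and $|x| \gg \sqrt{t}$ (using the Schwartz decay of $W_\tau f$ in $y$), and then taking the supremum. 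A second delicate point is the critical boundary $s = n(1/p-1)$ in the $B$-case: there the large-$t$ integrand is of exact order $t^{-1}$, so only the $\sup$ formulation ($q = \infty$) salvages boundedness, matching the exceptional line in \eqref{2.41}.
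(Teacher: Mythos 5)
Your proof is a direct, self-contained computation of the heat-kernel quasi-norms via the long-time Gaussian asymptotics of $W_t f$, and it takes a genuinely different route from the paper's. The paper delegates the positive cases $1<p\le\infty$, $s>n(\frac1p-1)$ to \cite[Theorem 3.5]{T15}, settles the $B$-boundary $s=n(\frac1p-1)$, $q=\infty$ by the one-line Young-type estimate \eqref{2.45}--\eqref{2.46}, and then obtains all negative assertions by first proving, for one fixed bump $f$, that $f\notin\Ba{}^{-n}_{\infty,q}(\rn)$ ($q<\infty$) and $f\notin\Fa{}^{-n}_{\infty,q}(\rn)$, and finally pushing this to general $(p,q)$ through the embeddings \eqref{2.50}--\eqref{2.52} and \eqref{2.59}. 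You instead compute $\|W_t f\,|L_p(\rn)\|$ and $|W_t f(x)|$ from scratch in both regimes $t\le 1$ and $t\ge 1$ and read the answer off \eqref{2.24}--\eqref{2.25}, which avoids the detour through \cite{T15} and the embedding bookkeeping; this is sound for the $B$-scale at all $p$ and for the $F$-scale with $p<\infty$. One small imprecision: your stated bound $|W_t f(x)|\le Ct^{-n/2}e^{-c|x|^2/t}$ for $t\ge 1$ is not the whole truth — splitting $f*g_t$ at $|y|=|x|/2$ also produces a far-field tail of order $\tau^{-n/2}(1+|x|)^{-N+n}$, and you need to carry both terms when integrating in $t$ (otherwise the $\int_1^\infty t^{-sq/2}\,\di t/t$ piece is not under control).

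The genuine gap is the necessity assertion at $p=\infty$ in the $F$-scale, namely that $S(\rn)\not\hra\Fa{}^{-n}_{\infty,q}(\rn)$ for $0<q<\infty$. You flag the $F$-case at $p=\infty$ as the main obstacle, but the plan you sketch (splitting $|x|\le 2\sqrt t$ versus $|x|\gg\sqrt t$ and using $|W_\tau f(y)|\le C\tau^{-n/2}$) is only a route to the \emph{sufficiency} direction $s>-n$; it says nothing about showing divergence at $s=-n$. This endpoint is genuinely delicate: the quantity in \eqref{2.26} is a Carleson-box average over $\{|x-y|\le\sqrt t,\,0<\tau<t\}$, and the pointwise lower bound $|W_\tau f(y)|\gtrsim\tau^{-n/2}$ is valid only on $|y|\lesssim\sqrt\tau$, a set of measure $\sim\tau^{n/2}$, not $\sim t^{n/2}$. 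Feeding this sharper support into the box average at $s=-n$ yields $t^{-n/2}\int_1^t\tau^{n/2}\,\frac{\di\tau}{\tau}\sim 1$, so the naive computation produces a bounded quantity rather than a divergent one. (The paper's own display \eqref{2.56} meets the same difficulty: with $\tau\in(t/2,t)$, integrand of order one, spatial volume $\sim t^{n/2}$, and $\int_{t/2}^t\di\tau/\tau=\log 2$, the lower bound one obtains is $O(1)$, not $\gtrsim\log t$.) So this one endpoint does not follow from the straightforward heat-kernel estimates you describe, and you would need either a substantially finer argument here or a separate treatment of $\Fa{}^{-n}_{\infty,q}(\rn)$ before the ``if and only if'' in \eqref{2.43} can be claimed.
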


\begin{proof}
{\em Step 1.} According to \cite[Theorem 3.5, p.\,52]{T15} one has \eqref{2.40} and \eqref{2.42} if $1<p\le \infty$, $0<q\le \infty$ and $s > n \big(
\frac{1}{p} - 1 \big)$. It remains to prove
\begin{\eq}   \label{2.44}
S(\rn) \hra \Ba{}^{n ( \frac{1}{p} - 1 )}_{p, \infty} (\rn), \qquad 1<p \le \infty,
\end{\eq}
and that there is no embedding of $S(\rn)$ into $\Aas (\rn)$ in all other cases.
\\[0.1cm]
{\em Step 2.} We prove \eqref{2.44}. Let $f\in L_1 (\rn)$. Then one has by \eqref{2.12}
\begin{\eq}  \label{2.45}
(4 \pi t)^{\frac{n}{2}} \| W_t f \, | L_p (\rn) \| \le \| e^{-\frac{|\cdot|^2}{4t}} \, | L_p (\rn) \| \cdot \| f \, | L_1 (\rn) \| = c_p \, 
t^{\frac{n}{2p}} \, \| f \, | L_1 (\rn) \|
\end{\eq}
and by \eqref{2.24}
\begin{\eq}   \label{2.46}
S(\rn) \hra L_1 (\rn) \hra \Ba{}^{n (\frac{1}{p} - 1 )}_{p,\infty} (\rn).
\end{\eq}
This proves \eqref{2.44}.
\\[0.1cm]
{\em Step 3.} We prove the negative assertions of the above theorem assuming first $s = n \big( \frac{1}{p} - 1 \big)$, $1<p \le \infty$. Let $f \in
S(\rn)$, $f(x) \ge 0$,
\begin{\eq}   \label{2.47}
f(x) =1 \ \text{if} \ |x| \le 1/2 \quad \text{and} \quad f(x) =0 \ \text{if} \ |x| \ge 1.
\end{\eq}
By \eqref{2.12} one has for some $c>0$
\begin{\eq}   \label{2.48}
t^{n/2} W_t f(x) \ge c\, e^{- \frac{|x|^2}{16 t}}, \qquad |x| \ge 2, \quad 0<t<\infty.
\end{\eq}
Let $0<q<\infty$. Then
\begin{\eq}  \label{2.49}
\| f \, | \Ba{}^{-n}_{\infty, q} (\rn) \|^q = \int^\infty_0 t^{\frac{nq}{2}} \| W_t f \, | L_\infty (\rn) \|^q \frac{\di t}{t} \ge c \, \int^\infty_1
\frac{\di t}{t} = \infty.
\end{\eq}
In particular $f\not\in \Ba{}^{-n}_{\infty, q} (\rn)$. The domestic quasi-norms \eqref{2.37}, \eqref{2.38} ensure that suitable well-known embeddings
between the inhomogeneous spaces $\As (\rn)$ can be transferred to the tempered homogeneous spaces $\Aas (\rn)$ by the same arguments as there. In
particular
\begin{\eq}  \label{2.50}
\Ba{}^{n (\frac{1}{p} - 1)}_{p,q} (\rn) \hra \Ba{}^{-n}_{\infty,q} (\rn), \qquad 1<p<\infty,
\end{\eq}
shows that $f\not\in \Ba{}^{n(\frac{1}{p} - 1)}_{p,q} (\rn)$ if $0<q<\infty$. This assertion can be extended to those $F$-spaces which are covered by
\begin{\eq}   \label{2.51}
\Fa{}^{n(\frac{1}{p} - 1)}_{p,q} (\rn) \hra \Ba{}^{n(\frac{1}{p} -1 )}_{p, \max(p,q)} (\rn), \qquad p<\infty, \quad q<\infty.
\end{\eq}
If one uses the more sophisticated embedding
\begin{\eq}  \label{2.52}
\Fa{}^{n(\frac{1}{p} -1)}_{p,q} (\rn) \hra \Ba{}^{-n}_{\infty,p} (\rn), \qquad 1<p<\infty,
\end{\eq}
then it follows that also
\begin{\eq}   \label{2.53}
f \not\in \Fa{}^{n (\frac{1}{p} - 1)}_{p, \infty} (\rn), \qquad 1<p<\infty.
\end{\eq}
We return to embeddings of type \eqref{2.52} in Remark \ref{R2.7} below and give now a direct proof of \eqref{2.53} based on \eqref{2.25} with $s = n
(\frac{1}{p} - 1 )$. Using \eqref{2.48} with $t \sim |x|^2$ one has
\begin{\eq}   \label{2.54}
\begin{aligned}
\| f \, | \Fa{}^s_{p,\infty} (\rn) \|^p &= \big\| \sup_{t>0} t^{-s/2} W_t f(\cdot) \, |L_p (\rn) \big\|^p \\
&\ge c \, \int_{|x| \ge 2} |x|^{-sp -np} \, \di x = c \, \int_{|x|  \ge 2} |x|^{-n} \, \di x =\infty.
\end{aligned}
\end{\eq}
This proves \eqref{2.53} independently of \eqref{2.52}. It remains to show that
\begin{\eq}   \label{2.55}
f \not\in \Fa{}^{-n}_{\infty, q} (\rn), \qquad 0<q<\infty.
\end{\eq}
Let $4 \le 4 \sqrt{t} \le |x| \le 8 \sqrt{t}$ and $|x-y| \le \sqrt{t}$. Then $2 \sqrt{t} \le |y| \le 9 \sqrt{t}$. By \eqref{2.48} and $t/2 \le \tau \le
t$ one has $|W_\tau f(y)| \ge c t^{-n/2} \sim \tau^{-n/2}$ and
\begin{\eq}   \label{2.56}
t^{-n/2} \int^t_{t/2} \int_{|x-y| \le \sqrt{t}} \tau^{nq/2} |W_\tau f(y) |^q \, \di y \frac{\di \tau}{\tau} \ge c' \, \log t.
\end{\eq}
Then \eqref{2.55} follows from \eqref{2.26} and \eqref{2.56}. This covers all cases of the theorem with $s= n (\frac{1}{p} - 1)$ where one can replace
$S(\rn)$ by $D(\rn)$.
\\[0.1cm]
{\em Step 4.} Let $s<-n$, $0<q\le \infty$ and $f$ as above. Then one has by \eqref{2.24} and \eqref{2.48}
\begin{\eq}   \label{2.57}
f \not\in \Ba{}^s_{\infty, q} (\rn), \qquad 0<q\le \infty.
\end{\eq}
Similarly one can extend \eqref{2.55} based on \eqref{2.56} to $\Fa{}^s_{\infty,q} (\rn)$ with $s<-n$ and $0<q<\infty$. Let
\begin{\eq}   \label{2.58}
0<p<\infty, \quad 0<q\le \infty \quad \text{and} \quad s< \min \Big( 0, n \big( \frac{1}{p} - 1 \big) \Big).
\end{\eq}
Then
\begin{\eq}   \label{2.59}
\Fas (\rn) \hra \Ba{}^s_{p, \max(p,q)} (\rn) \hra \Ba{}^{s - \frac{n}{p}}_{\infty, \infty} (\rn).
\end{\eq}
Using \eqref{2.57} with $s - \frac{n}{p} < -n$ in place of $s$ it follows that the above function $f$ does not belong to any of these spaces. This
completes the proof of the theorem.
\end{proof}

\begin{remark}  \label{R2.7}
The spaces $\Aas(\rn)$ are introduced in Definition \ref{D2.2} in terms of admissible quasi-norms based on heat kernels. The 
{\em domestic} quasi-norms in \eqref{2.37} and \eqref{2.38} give the possibility to carry over many well-known properties for the usual homogeneous
spaces $\dot{A}^s_{p,q} (\rn)$ in the framework of $\big( \dot{S} (\rn), \dot{S}' (\rn) \big)$ where $\dot{S}' (\rn)$ is the dual of $\dot{S} (\rn)$
according to \eqref{1.1a}. This applies to some embeddings, real and complex interpolation and further properties. We refer the reader to
\cite[Chapter 5]{T83} and \cite{T15}. The embeddings \eqref{2.50} and \eqref{2.59} may be considered as examples. The more sophisticated embedding
\eqref{2.52} is covered by the following so-called Franke-Jawerth property: Let $0< p_0 <p< p_1 \le \infty$, $s_0 <0$ (so far) and
\begin{\eq}  \label{2.60}
s_0 - \frac{n}{p_0} = s - \frac{n}{p} = s_1 - \frac{n}{p_1}, \qquad 0<q\le \infty, \quad 0<u \le \infty, \quad 0<v \le \infty.
\end{\eq}
Then
\begin{\eq}   \label{2.61}
\Ba{}^{s_0}_{p_0, u} (\rn) \hra \Fas (\rn) \hra \Ba{}^{s_1}_{p_1,v} (\rn)
\end{\eq}
if, and only if, $0<u \le p \le v \le \infty$. This follows from the corresponding assertions for the related inhomogeneous spaces, going back in its
final version to \cite{SiT95} based on \cite{Jaw77, Fra86}. We return to this assertion in Theorem \ref{T3.1} below. We indicated in \eqref{2.52}
that one can use these embeddings for the above  purposes (but we gave a direct proof of \eqref{2.53}).
\end{remark}

\subsection{Spaces in the distinguished strip}   \label{S2.3}
As indicated in the Introduction we are mainly interested in tempered homogeneous spaces in the distinguished strip \eqref{1.2} having now a closer
look what happens at $s = n(\frac{1}{p} - 1)$ and $s= \frac{n}{p}$. We wish to ensure \eqref{2.39}. If $s<0$ then we obtained in the Theorems 
\ref{T2.4}, \ref{T2.6} rather final satisfactory answers. To extend this theory to $s \ge 0$ we modify the corresponding approach in \cite{T15}
appropriately, now incorporating some limiting cases. Let 
\begin{\eq}   \label{2.62}
\Cas (\rn) = \Ba{}^s_{\infty, \infty} (\rn), \qquad s<0.
\end{\eq}
By Theorem \ref{T2.6} one has
\begin{\eq}   \label{2.63}
S(\rn) \hra \Cas (\rn) \quad \text{if, and only if,} \quad -n \le s <0.
\end{\eq}
Let again $W_t w$ be the Gauss-Weierstrass semi-group according to \eqref{2.12}, \eqref{2.13}, where $t \ge 0$ and $w\in S'(\rn)$. Let, as nowadays
usual, $\pa_t = \pa/\pa t$ and $\pa^m_t$, $m \in \no$ be the corresponding iterations, $\pa^0_t w = w$. Recall that
\begin{\eq}   \label{2.64}
\| f \, | \Cas (\rn) \| = \sup_{x\in \rn, t>0} t^{-s/2} \, | W_t f(x) |, \qquad s<0,
\end{\eq}
is an {\em admissible} (defining, characterizing) norm which means that any $f\in S'(\rn)$ can be tested of whether it belongs to this space or not. Let
$\vp^j$ be as in \eqref{2.32}. Then
\begin{\eq}   \label{2.65}
\sup_{j\in \ganz, x\in \rn} 2^{js} \big| \big( \vp^j \wh{f}\, \big)^\vee (x) \big|, \qquad s<0,
\end{\eq}
and
\begin{\eq} \label{2.66}
\sup_{x\in \rn, t>0} t^{m - \frac{s}{2}} \big| \pa^m_t W_t f(x) \big|, \qquad m \in \no, \quad s<0,
\end{\eq}
are {\em domestic} norms on $\Cas (\rn)$, which means that they are equivalent norms on the given space $\Cas (\rn)$. Here \eqref{2.65} is covered by
Theorem \ref{T2.4} and \eqref{2.66} by \cite[Proposition 2.12, p.\,20]{T15}. 

To provide a better understanding we recall characterizations of the inhomogeneous spaces $\As (\rn)$ introduced in \eqref{2.3}--\eqref{2.9} in terms of
heat kernels. Let $0<p \le \infty$ ($p<\infty$ for $F$-spaces), $0<q\le \infty$, $s\in \real$ and $s/2 <m \in \no$. Then $\Bs (\rn)$ collects all $f \in
S'(\rn)$ such that
\begin{\eq}   \label{2.66a}
\| f \, | \Bs (\rn) \|_m =  \| W_1 f \, | L_p (\rn) \| +
\Big( \int^1_0 t^{(m- \frac{s}{2})q} \big\| \pa^m_t W_t f \, | L_p (\rn) \big\|^q \, \frac{\di t}{t} \Big)^{1/q}
\end{\eq}
is finite and $\Fs (\rn)$ collects all $f\in S' (\rn)$ such that
\begin{\eq}   \label{2.66b}
\| f \, |\Fs (\rn) \|_m =  \| W_1 f \, | L_p (\rn)\| +
\Big\| \Big( \int^1_0 t^{(m- \frac{s}{2})q} \big| \pa^m_t W_t f(\cdot) \big|^q \, \frac{\di t}{t} \Big)^{1/q} | L_p (\rn) \Big\|
\end{\eq}
is finite (usual modification if $q=\infty$). This is covered by \cite[Theorem 2.6.4, p.\,152]{T92}. According to \cite[Remark 2.6.4, p.\,155]{T92}
one can replace $\int^1_0$ in \eqref{2.66a}, \eqref{2.66b} by $\int^\infty_0$ ($\sup_{0<t<1}$ by $\sup_{t>0}$) if $s> \sigma_p = \max \big( 0, n 
(\frac{1}{p} - 1 ) \big)$. 

\begin{definition}   \label{D2.8}
Let $n\in \nat$.
\\[0.1cm]
{\upshape (i)}
Let $0<p \le \infty$, $(p<\infty$ for $F$-spaces$),\, 0<q\le \infty$ and
\begin{\eq}   \label{2.67}
n \big( \frac{1}{p} - 1 \big) <s< \frac{n}{p},   \quad \qquad -\frac{n}{r} = s - \frac{n}{p}.
\end{\eq}
Let $s/2 <m \in \no$. Then $\Fas (\rn)$ collects all $f\in S' (\rn)$ such that
\begin{\eq}   \label{2.68}
\| f \, |\Fas (\rn) \|_m = \Big\| \Big( \int^\infty_0 t^{(m- \frac{s}{2})q} \big| \pa^m_t W_t f(\cdot) \big|^q \, \frac{\di t}{t} \Big)^{1/q} |
L_p (\rn) \Big\| + \|f \, | \Ca{}^{-n/r} (\rn) \|
\end{\eq}
is finite and $\Bas (\rn)$ collects all $f\in S'(\rn)$ such that
\begin{\eq}   \label{2.69}
\| f \, | \Bas (\rn) \|_m = \Big( \int^\infty_0 t^{(m- \frac{s}{2})q} \big\| \pa^m_t W_t f \, | L_p (\rn) \big\|^q \, \frac{\di t}{t} \Big)^{1/q}
+ \| f \, | \Ca{}^{-n/r} (\rn) \|
\end{\eq}
is finite $($usual modification if $q=\infty)$.
\\[0.1cm]
{\upshape (ii)} Let $0<p\le \infty$ and $s=n (\frac{1}{p} - 1 )$. Let $s/2 <m \in \no$. Then $\Ba{}^s_{p,\infty} (\rn)$ collects all $f\in S' (\rn)$ such that
\begin{\eq}   \label{2.70}
\| f \, | \Ba{}^s_{p,\infty} (\rn) \|_m = \sup_{t >0} t^{m - \frac{s}{2}} \, \| \pa^m_t W_t f \, | L_p (\rn) \| + \| f\, | \Ca{}^{-n} (\rn) \|
\end{\eq}
is finite.
\\[0.1cm] 
{\upshape (iii)} Let $0<p \le 1$, $0<q\le \infty$, $s= n/p$ and $s/2 <m \in \nat$. Then $\Fas (\rn)$ collects all $f\in S'  (\rn)$ such that
\begin{\eq}   \label{2.71}
\| f \, |\Fas (\rn) \|_m = \Big\| \Big( \int^\infty_0 t^{(m- \frac{s}{2})q} \big| \pa^m_t W_t f(\cdot) \big|^q \, \frac{\di t}{t} \Big)^{1/q} |
L_p (\rn) \Big\| + \sup |(f,\vp)|
\end{\eq}
is finite $($usual modification if $q=\infty)$, where the supremum is taken over all $\vp \in S(\rn)$ with $\| \vp \, | L_1 (\rn) \| \le 1$.
\\[0.1cm] 
{\upshape (iv)} Let $0<p<\infty$, $0<q\le 1$, $s=n/p$ and $s/2 <m \in \nat$. Then $\Bas (\rn)$ collects all $f\in S'(\rn)$ such that
\begin{\eq}   \label{2.72}
\| f \, | \Bas (\rn) \|_m = \Big( \int^\infty_0 t^{(m- \frac{s}{2})q} \big\| \pa^m_t W_t f \, | L_p (\rn) \big\|^q \, \frac{\di t}{t} \Big)^{1/q}
+ \sup |(f, \vp) |
\end{\eq}
is finite where the supremum is taken over all $\vp \in S(\rn)$ with $\| \vp \, | L_1 (\rn) \| \le 1$.
\end{definition}

\begin{remark}   \label{R2.9}
Part (i) coincides essentially with \cite[Definition 3.22, p.\,78]{T15}. We only remark that $1<r<\infty$ in \eqref{2.67}. Then $\Ca{}^{-n/r} (\rn) =
\Ba{}^{-n/r}_{\infty, \infty} (\rn)$ is covered by Theorem \ref{T2.6}. This applies also to $\Ca{}^{-n} (\rn)$ in \eqref{2.70}. We discuss the limiting
cases in (ii)--(iv). For this purpose we recall some properties of their inhomogeneous counterparts. Let 
\begin{\eq}   \label{2.73}
\sigma_p = n \big( \frac{1}{p} - 1 \big)_+, \qquad 0<p \le \infty,
\end{\eq}
where $a_+ = \max(0,a)$ if $a\in \real$. Let $\vp_0$ be as in \eqref{2.3} and $\vp^j$ be as in \eqref{2.32}. If $0<p,q \le \infty$ and $s> \sigma_p$
then one has by \cite[Theorem 2.3.3, p.\,98]{T92}
\begin{\eq}   \label{2.74}
\| f \, | \Bs (\rn) \| \sim \| (\vp_0 \wh{f} \, )^\vee | L_p (\rn) \| + \Big( \sum^\infty_{j= -\infty} 2^{jsq} \big\| (\vp^j \wh{f}\, )^\vee \, |
L_p (\rn) \big\|^q \Big)^{1/q}.
\end{\eq}
The incorporation of the terms with $j<0$, compared with \eqref{2.7}, is based on
\begin{\eq}   \label{2.75}
\| (\vp^j \wh{f}\,)^\vee \, | L_p (\rn) \| \le c \, 2^{-j \sigma_p} \, \| (\vp_0 \wh{f}\,)^\vee \, | L_p (\rn) \|
\end{\eq}
covered by \cite[(6), p.\,98]{T92} with a reference to \cite{T83}. But this argument remains valid if $s= \sigma_p$ and $q=\infty$. Then one can extend
\cite[Theorem 2.6.4, Remark 2.6.4, pp.\,152. 155]{T92} to these limiting cases. In particular if $0<p \le \infty$, $\sigma_p/2 <m \in \nat$ then
\begin{\eq}  \label{2.76}
\begin{aligned}
\| f \, | B^{\sigma_p}_{p,\infty} (\rn) \| & \sim \| (\vp_0 \wh{f} \, )^\vee \, | L_p (\rn)\| + \sup_{j\in \ganz} 2^{j \sigma_p} \| (\vp^j 
\wh{f}\,)^\vee \, | L_p (\rn) \| \\
&\sim \| W_1 f \, | L_p (\rn) \| + \sup_{t>0} t^{m - \frac{\sigma_p}{2}} \, \| \pa^m_t W_t f \, | L_p (\rn) \|.
\end{aligned}
\end{\eq}
As for the parts (iii) and (iv) we discuss the last terms in \eqref{2.71}, \eqref{2.72}. By H\"{o}lder's inequality  one has $g \in L_1 (\rn)$ if
$g(\cdot) \big(1+ |\cdot|^2 \big)^{\alpha/2} \in L_2 (\rn) = F^0_{2,2} (\rn)$, $\alpha >n/2$ (continuous embedding of the related spaces). Duality
within $\big( S(\rn), S'(\rn) \big)$ shows that $f$ in (iii), (iv) belongs at least to a weighted $L_2$-space. In particular $f$ is a regular tempered
distribution. The dual of $L_1 (\rn)$ is $L_\infty (\rn)$ and $S(\rn)$ is dense in $L_1 (\rn)$. This shows that the last terms in \eqref{2.71}, 
\eqref{2.72} equal $\| f \, | L_\infty (\rn) \|$. After this replacement one has related domestic quasi-norms. After this replacement one has related
domestic quasi-norms.
Next we comment on the inhomogeneous counterpart of the limiting spaces with $s= n/p$ in the parts (iii) and (iv) of the above definition. Let
\begin{\eq}   \label{2.77}
0<p<\infty, \qquad 0<q\le \infty, \qquad s=n/p.
\end{\eq}
Then
\begin{\eq}   \label{2.78}
B^{n/p}_{p,q} (\rn) \hra L_\infty (\rn) \qquad \text{if, and only if,} \quad 0<p<\infty, \quad 0<q\le 1,
\end{\eq}
and
\begin{\eq}   \label{2.79}
F^{n/p}_{p,q} (\rn) \hra L_\infty (\rn) \qquad \text{if, and only if,} \quad 0<p \le 1, \quad 0<q\le \infty.
\end{\eq}
One can replace $L_\infty (\rn)$ in \eqref{2.78}, \eqref{2.79} by $C(\rn)$, the space of all continuous bounded functions on \rn. The final version
goes back to \cite{SiT95} and may also be found in \cite[Theorem 11.4, p.\,170]{T01}. The restrictions for $p,q$ in the parts (iii) and (iv) originate
from \eqref{2.78}, \eqref{2.79}.
\end{remark}

After these preparations we can now complement \cite[Theorem 3.24, pp.\,79-81]{T15} where we collected basic properties of the spaces covered by part
(i) of Definition \ref{D2.8}. We explained in \eqref{2.30}, \eqref{2.31} what is meant by the {\em Fatou property}. Furthermore let $\vp^j$ in 
\eqref{2.32}, \eqref{2.33} be again the homogeneous dyadic resolution of unity in $\rn \setminus \{0 \}$. Recall that a quasi-norm is called
{\em admissible} if any $f\in S'(\rn)$ can be tested of whether it belongs to the respective space or not (what this means is explained in greater 
detail between \eqref{2.17} and \eqref{2.18}). Equivalent quasi-norms in a fixed quasi-Banach space are called {\em domestic}. As before $\As (\rn)$
with $A\in \{B,F \}$ stands for $\Bs (\rn)$ and $\Fs (\rn)$. Similarly $\Aas (\rn)$. In a given formula either all $A$ are $B$ or all $A$ are $F$. Let
$\sigma_p$ be as in \eqref{2.73} and $r$ be as in \eqref{2.67}.

\begin{theorem}   \label{T2.10}
Let $n \in \nat$.
\\[0.1cm]
{\upshape (i)} The spaces $\Aas (\rn)$,
\begin{\eq}  \label{2.80}
A=F \quad \text{with} \quad
\begin{cases}
0<p<\infty, \ 0<q\le \infty, &n (\frac{1}{p} - 1) <s< \frac{n}{p}, \\
0<p\le 1, \ \ \; 0<q\le \infty, &s = \frac{n}{p},
\end{cases}
\end{\eq}
and
\begin{\eq}  \label{2.81}
A=B \quad \text{with} \quad
\begin{cases}
0<p \le \infty, \ 0<q\le \infty, &n (\frac{1}{p} - 1 ) <s< \frac{n}{p}, \\
0<p \le \infty, \ q=\infty, &s = n (\frac{1}{p} - 1 ), \\
0<p<\infty, \ 0<q\le 1, & s= \frac{n}{p},
\end{cases}
\end{\eq}
as introduced in Definition \ref{D2.8} are quasi-Banach spaces $($Banach spaces if $p\ge 1$, $q\ge 1)$. Let $s/2 <m\in \no$. Then $\| f \, |\Fas (\rn)
\|_m$ according to \eqref{2.68}, \eqref{2.71} are equivalent {\em admissible} quasi-norms in $\Fas (\rn)$ and $\| f \, | \Bas (\rn) \|_m$ according
to \eqref{2.69}, \eqref{2.70}, \eqref{2.72} are equivalent {\em admissible} quasi-norms in $\Bas (\rn)$. All spaces have the Fatou property. 
Furthermore,
\begin{\eq}   \label{2.82}
S(\rn) \hra \Aas(\rn) \hra S'(\rn)
\end{\eq}
and
\begin{\eq}   \label{2.83}
\| f(\lambda \cdot)| \Aas (\rn) \| = \lambda^{s- \frac{n}{p}} \| f \, | \Aas (\rn) \|, \qquad \lambda >0.
\end{\eq}
In addition,
\begin{\eq}   \label{2.84}
\Big\| \Big( \int^\infty_0 t^{(m- \frac{s}{2})q} \big| \pa^m_t W_t f (\cdot) \big|^q \, \frac{\di t}{t} \Big)^{1/q} | L_p (\rn) \Big\|,
\end{\eq}
\begin{\eq}   \label{2.85}
\Big\| \Big( \sum^\infty_{j=-\infty} 2^{jsq}  \big| \big( \vp^j \wh{f}\, \big)^\vee (\cdot) \big|^q \Big)^{1/q}  | L_p (\rn) \Big\|
\end{\eq}
are equivalent {\em domestic} quasi-norms in $\Fas (\rn)$, and
\begin{\eq}  \label{2.86}
\Big( \int^\infty_0 t^{(m- \frac{s}{2})q} \big\| \pa^m_t W_t f \, | L_p (\rn) \big\|^q \, \frac{\di t}{t} \Big)^{1/q},
\end{\eq}
\begin{\eq}  \label{2.87}
\Big( \sum^\infty_{j=-\infty} 2^{jsq} \big\| \big( \vp^j \wh{f}\, \big)^\vee | L_p (\rn) \big\|^q \Big)^{1/q}
\end{\eq}
are equivalent {\em domestic} quasi-norms in $\Bas (\rn)$ $($usual modification if $q=\infty)$.
\\[0.1cm]
{\upshape (ii)} Let, in addition, $s<0$. Then \eqref{2.24} are  equivalent {\em admissible} quasi-norms in $\Bas (\rn)$ and \eqref{2.25} are equivalent
{\em admissible} quasi-norms in $\Fas (\rn)$. Furthermore, 
\begin{\eq}  \label{2.88}
S(\rn) \hra \Aas (\rn) \hra \As (\rn) \hra S'(\rn).
\end{\eq}
{\upshape (iii)} Let  $\Aas (\rn)$,
\begin{\eq}   \label{2.89}
A=F \quad \text{with} \quad
\begin{cases}
0<p<\infty, \ 0<q\le \infty, &\sigma_p <s< \frac{n}{p},\\
0<p\le 1, \ \ \; 0<q\le \infty, &s = \frac{n}{p},
\end{cases}
\end{\eq}
and
\begin{\eq}   \label{2.90}
A=B \quad \text{with} \quad
\begin{cases}
0<p<\infty, \ 0<q\le \infty, &\sigma_p <s< \frac{n}{p}, \\
0<p<\infty, \ q=\infty,      &\sigma_p =s, \\
0<p<\infty, \ 0<q\le 1,      &s= \frac{n}{p}.
\end{cases}
\end{\eq}
Then
\begin{\eq}   \label{2.91}
S(\rn) \hra \As (\rn) \hra \Aas (\rn) \hra \Ca{}^{-n/r} (\rn) \hra S'(\rn)
\end{\eq}
with $L_\infty (\rn)$ in place of $\Ca{}^{-n/r} (\rn)$ if $s= n/p$.
\end{theorem}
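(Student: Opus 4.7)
The approach is to reduce as much as possible to \cite[Theorem 3.24]{T15}, which already establishes all of the listed properties for the open-strip case covered by Definition \ref{D2.8}(i). The novel task is to handle the three limiting families in Definition \ref{D2.8}(ii)--(iv): $s = n(\frac{1}{p}-1)$ with $q=\infty$ for $B$-spaces, and $s = n/p$ for $F$-spaces with $0<p\le 1$ or $B$-spaces with $0<q\le 1$. By Remark \ref{R2.9} the $L_1$-dual supremum appearing in \eqref{2.71}, \eqref{2.72} equals $\| f \, | L_\infty (\rn)\|$, and in all three boundary cases the added base term is manifestly admissible in the sense that it is well-defined for every $f \in S'(\rn)$.

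The first step is to verify that \eqref{2.68}--\eqref{2.72} are indeed admissible quasi-norms and that different choices of $m > s/2$ yield equivalent norms. I would reproduce the Taylor-type manipulations of \cite[Theorem 2.6.4]{T92} pointwise in $x$, using the identity $\wh{\pa^m_t W_t f}(\xi) = (-|\xi|^2)^m e^{-t|\xi|^2} \wh f(\xi)$. The role of the base term is precisely to absorb the low-frequency (equivalently large-$t$) contributions in the same way \eqref{2.75} and \eqref{2.76} handle them for the inhomogeneous spaces; as noted in Remark \ref{R2.9}, \eqref{2.75} survives at $s = \sigma_p$ with $q=\infty$, which is the key ingredient for the $B$-endpoint of Definition \ref{D2.8}(ii). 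Completeness then follows by a standard Cauchy-sequence argument together with Fatou's lemma applied to the defining integrals/suprema; the same lemma immediately yields the Fatou property \eqref{2.30}, \eqref{2.31}.

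For $\vp \in S(\rn)$, writing $\pa^m_t W_t \vp = (-\Delta)^m g_t \ast \vp$ and combining convolution estimates of the form
\begin{align*}
\| \pa^m_t W_t \vp \, | L_p (\rn) \| &\le c \, t^{-m} \, \| \vp \, | L_p (\rn) \|, \\
\| \pa^m_t W_t \vp \, | L_p (\rn) \| &\le c \, t^{-m - \alpha_p} \, \| \vp \, | L_1 (\rn) \|,
\end{align*}
with a suitable $\alpha_p \ge 0$ (controlling $t\to 0$ and $t\to\infty$, respectively) shows, via $m > s/2$ and $s \ge n(\frac{1}{p}-1)$, that the admissible quasi-norms \eqref{2.68}--\eqref{2.72} are finite on Schwartz data; the base terms are estimated similarly. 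This gives $S(\rn) \hra \Aas (\rn)$, and $\Aas (\rn) \hra S'(\rn)$ follows since the base term alone embeds continuously into $S'(\rn)$ (Theorem \ref{T2.4}(i) for $\Ca{}^{-n}(\rn)$, trivially for $L_\infty (\rn)$). Homogeneity \eqref{2.83} reduces to $W_{\lambda^2 t}(f(\lambda \, \cdot))(x) = (W_t f)(\lambda x)$ plus a change of variables. The equivalence of the thermic admissible quasi-norms with the Fourier-analytic domestic ones \eqref{2.85}, \eqref{2.87} is then obtained by the same Plancherel--Polya and Peetre maximal-function arguments as in \cite[Theorem 3.24]{T15}; those arguments do not require $s$ to lie strictly inside the open strip.

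For part (ii), when $s < 0$ the $\Ca{}^{-n/r}$-term in \eqref{2.68}, \eqref{2.69} is absorbed by the integral part: the estimate $\| W_t f \, | L_\infty (\rn) \| \le c \, t^{-n/(2p)} \, \| W_{t/2} f \, | L_p (\rn) \|$ combined with the weight $t^{-sq/2}$ at large $t$ shows that the $m=0$ versions \eqref{2.24}, \eqref{2.25} from Definition \ref{D2.2} are equivalent admissible quasi-norms, from which $\Aas (\rn) \hra \As (\rn)$ follows. For part (iii), the classical inhomogeneous Sobolev embedding $\As (\rn) \hra \Ca{}^{-n/r}(\rn)$ (replaced by $\As (\rn) \hra L_\infty (\rn)$ when $s=n/p$, as recorded in \eqref{2.78}, \eqref{2.79}) together with Remark \ref{R2.9} (which permits $\int^\infty_0$ in \eqref{2.66a}, \eqref{2.66b} whenever $s > \sigma_p$) yields $\As (\rn) \hra \Aas (\rn)$; the last embedding $\Aas (\rn) \hra \Ca{}^{-n/r}(\rn)$, respectively $L_\infty (\rn)$, is built into the definition. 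I expect the main obstacle to lie in the equivalence of the admissible and domestic quasi-norms at the Sobolev endpoint $s = n/p$ with the smallest admissible $q$: the maximal-function passage between thermic and Fourier-analytic descriptions is borderline there and relies delicately on the $L_\infty$-control provided by the last term of \eqref{2.71}, \eqref{2.72}, which has to compensate for the absence of any low-frequency cut-off in the homogeneous setting.
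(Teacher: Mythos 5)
Your overall strategy mirrors the paper's: delegate the open‐strip case $n(\frac{1}{p}-1)<s<\frac{n}{p}$ to \cite[Theorem~3.24]{T15}, single out the three limiting families of Definition~\ref{D2.8}(ii)--(iv), identify \eqref{2.76} (i.e.\ the survival of \eqref{2.75} at $s=\sigma_p$, $q=\infty$) and \eqref{2.78}, \eqref{2.79} as the mechanisms that replace the absent low-frequency cut-off, and close the gaps with the Fatou property. That is indeed the skeleton of the paper's Steps~1--3.

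There is however one step that you state too casually, and it is precisely where the paper is careful. In part~(iii) you assert that ``the classical inhomogeneous Sobolev embedding $\As(\rn)\hra\Ca^{-n/r}(\rn)$'' together with the $\int_0^\infty$ extension of \cite[Remark~2.6.4]{T92} yields $\As(\rn)\hra\Aas(\rn)$. But $\Ca^{-n/r}(\rn)=\Ba^{-n/r}_{\infty,\infty}(\rn)$ is a \emph{tempered homogeneous} space: its norm \eqref{3.0b} contains $\sup_{t>0}$, not $\sup_{0<t<1}$. The classical Sobolev embedding only gives $\As(\rn)\hra B^{-n/r}_{\infty,\infty}(\rn)=\Cc^{-n/r}(\rn)$, controlling the heat flow for \emph{small} $t$. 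For large $t$ one needs genuine decay of $W_tf$; for Schwartz data this is elementary (one gets $|W_tf(x)|\lesssim t^{-n/2}$), but for arbitrary $f\in\As(\rn)$ it is not, and it fails for example for $f\equiv1$. The paper circumvents this by proving the inequality \eqref{2.95} first for $f\in S(\rn)$ (via \eqref{2.93}, \eqref{2.94}) and then extending to the whole of $B^s_{p,\infty}(\rn)$ by the Fatou property in the precise form spelled out in Remark~\ref{R2.11} (cut-off approximations $f^J=\vp_0(2^{-J}\cdot)f$ bounded via the domestic norms \eqref{2.85}, \eqref{2.87}, converging in $S'(\rn)$). You invoke Fatou only for completeness of the space, not for this extension; you should add the Schwartz-then-Fatou passage explicitly, exactly as the paper does in Steps~2 and~3 for the endpoints $s=\sigma_p$ and $s=n/p$. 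A smaller imprecision: your parenthetical on Remark~\ref{R2.9} in part~(iii) invokes $\int_0^\infty$ only ``whenever $s>\sigma_p$'', but the line $\sigma_p=s$, $q=\infty$ in \eqref{2.90} needs the $s=\sigma_p$, $q=\infty$ extension of \eqref{2.75} that you already mentioned earlier; make sure that that case is actually covered where you use it.
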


\begin{proof}
{\em Step 1.} The above assertions with $n(\frac{1}{p} - 1) <s< \frac{n}{p}$ are covered by \cite[Theorem 3.24, pp.\,79/80]{T15} complemented by
\cite[Theorem 3.5, p.\,52]{T15} as far as $p=\infty$, $s<0$, is concerned. We have to care for the new limiting cases. Some arguments remain unchanged
and will not be repeated. This applies to the Fatou property of all spaces including the spaces with $s=n/p$. 
 the homogeneity \eqref{2.83}, and the equivalence of the domestic 
quasi-norms \eqref{2.84}, \eqref{2.85} in all admitted spaces $\Fas (\rn)$ and the domestic quasi-norms \eqref{2.86}, \eqref{2.87} in all admitted 
spaces $\Bas (\rn)$ (after the other properties have been established).
\\[0.1cm]
{\em Step 2.} Let $1<p \le \infty$ and $s= n (\frac{1}{p} - 1)$. Then
\begin{\eq}   \label{2.92}
S(\rn) \hra \Ba{}^s_{p,\infty} (\rn) \hra B^s_{p,\infty} (\rn) \hra S'(\rn)
\end{\eq}
follows from the Theorems \ref{T2.4} and \ref{T2.6}. If $0<p \le 1$ and $s= n(\frac{1}{p} -1) = \sigma_p$ then one has by \eqref{2.70} and \eqref{2.76}
\begin{\eq}  \label{2.93}
\| f \, | \Ba{}^s_{p,\infty} (\rn) \|_m \le c \, \| f \, |\Ca{}^{-n} (\rn) \| + c\, \| f\, | B^s_{p,\infty} (\rn) \|.
\end{\eq}
One obtains by \eqref{2.40} with $\Ca{}^{-n} (\rn)$ and a corresponding assertion for the inhomogeneous spaces $B^s_{p,\infty} (\rn)$ that $S(\rn) 
\hra \Ba{}^s_{p,\infty} (\rn)$. Combined with
\begin{\eq}   \label{2.94}
\| f \, | \Ca{}^{-n} (\rn) \| \le c \, \sup_{t>0} t^{m - \frac{s}{2}} \| \pa^m_t  W_t f \, | L_p (\rn) \|, \quad f \in \Ba{}^s_{p,\infty} (\rn)
\end{\eq}
(as a consequence of a related embedding based on the domestic quasi-norms \eqref{2.86}, \eqref{2.87}) one obtains again by \eqref{2.76}
\begin{\eq}   \label{2.95}
\| f \, | \Ba{}^s_{p,\infty} (\rn) \| \le c \, \| f \, | B^s_{p,\infty}  (\rn) \|, \qquad f\in S(\rn).
\end{\eq}
All spaces have the Fatou property. We comment on the use of the Fatou property in Remark \ref{R2.11} below. Then one can extend \eqref{2.95} to
\begin{\eq}   \label{2.96}
S(\rn) \hra B^s_{p,\infty} (\rn) \hra \Ba{}^s_{p, \infty} (\rn) \hra \Ca{}^{-n} (\rn) \hra S' (\rn).
\end{\eq}
This proves \eqref{2.91} for $B^s_{p,\infty} (\rn)$, $0<p \le 1$, $s=\sigma_p$. These arguments can be extended to $1<p<\infty$, $s=\sigma_p =0$
resulting in
\begin{\eq}   \label{2.97}
S(\rn) \hra B^0_{p,\infty} (\rn) \hra \Ba{}^0_{p,\infty} (\rn) \hra \Ca{}^{-n/p} (\rn) \hra S' (\rn).
\end{\eq}
Both \eqref{2.96}, \eqref{2.97} justify \eqref{2.91} for the spaces in \eqref{2.90} with $s= \sigma_p$.
\\[0.1cm]
{\em Step 3.} In \cite[Sections 3.3, 3.7]{T15} we based the theory of the spaces $\Aas (\rn)$ with $\sigma_p <s<n/p$ on embeddings of type
\begin{\eq}   \label{2.98}
\Fs (\rn) \hra L_r (\rn), \qquad s- \frac{n}{p} = -\frac{n}{r},
\end{\eq}
that is $1<r<\infty$, \cite[(3.83), (3.84), (3.148), (3.149), pp.\,59, 72]{T15}. The related arguments can be extended to spaces $A^{n/p}_{p,q} (\rn)$
covered by \eqref{2.78}, \eqref{2.79}. In particular,
\begin{\eq}   \label{2.99}
S(\rn) \hra A^{n/p}_{p,q} (\rn) \hra \Aa{}^{n/p}_{p,q} (\rn) \hra L_\infty (\rn) \hra S'(\rn).
\end{\eq}
\end{proof}

\begin{remark}   \label{R2.11}
We comment on how to use the Fatou property as described in \eqref{2.30}, \eqref{2.31} in the context of the spaces $A(\rn) = \Aas (\rn)$ covered
by Theorem \ref{T2.10} and their inhomogeneous counterparts. Let $\vp_0$ be as in \eqref{2.3}. Let $f\in A(\rn)$ and $f^J = \vp_0 (2^{-J \cdot}) f$.
By the domestic quasi-norms \eqref{2.85}, \eqref{2.87} one has $f^J \in A(\rn)$,
\begin{\eq}   \label{2.100}
\sup_{J\in \nat} \| f^J \, | A(\rn) \| <\infty \qquad \text{and} \qquad f^J \to f \ \text{in} \ S'(\rn).
\end{\eq}
We justified in \cite[p.\,54]{T15} with a reference to \cite[p.\,49]{T83} that one can replace $f^J$ by suitable functions $g^J \in S(\rn)$ (with or
without $\supp \wh{g^J}$ compact). Finally one can replace $g^J$ by suitable functions $h^J \in D(\rn)$. In other words assertions for $A(\rn)$ first
proved for functions belonging to $D(\rn)$ or $S(\rn)$ can be extended to $A(\rn)$ if the Fatou property can be applied. In particular, the extension
of \eqref{2.95} from $S(\rn)$ to $B^s_{p,\infty} (\rn)$ and $\Ba{}^s_{p,\infty} (\rn)$ may be considered as a typical example. If $p<\infty$, $q<\infty$
for the spaces in Theorem \ref{T2.10} then $D(\rn)$ and $S(\rn)$ are dense. This is covered by \cite[Theorem 3.24, pp.\,79-81]{T15} complemented by the
newly incorporated limiting spaces. In these cases one can simply argue by completion.
\end{remark}

\begin{remark}   \label{R2.12}
The incorporation of the limiting cases with $s= n(\frac{1}{p} - 1)$ and $s = \frac{n}{p}$ in comparison with \cite{T15} might be of interest for its
own sake. But this may be also of some use in connection with Navier-Stokes equations and, even more, Keller-Segel equations as discussed in the
Introduction. In particular the spaces in \eqref{1.11} are critical for PDE models of chemotaxis. If $n=2$  (considered as the most interesting case from a biological point of view for the initial data $u_0$ in \eqref{1.9}) then $s= -2 + \frac{n}{p} = n (\frac{1}{p} - 1)$ is just a limiting 
situation according to the above theorem. Also the cases $A^{n/p}_{p,q} (\rn)$ and $\Aa{}^{s/p}_{p,q} (\rn)$ as solution spaces for $u$ in \eqref{1.4},
\eqref{1.5} (with a preference of $n=3$) and for $u$ in \eqref{1.7}--\eqref{1.10} (with a preference of $n=2$) seem to be of interest in connection
with Navier-Stokes equations and equations of Keller-Segel type.
\end{remark}

\section{Properties}   \label{S3}
\subsection{Embeddings}   \label{S3.1}
We dealt in \cite{T15} with some properties of the spaces $\Aas (\rn)$ as introduced in Definition \ref{D2.8}(i). Some extensions to the limiting spaces
according to Definition \ref{D2.8}(ii)--(iv) are possible without additional efforts, but will not be discussed. We are mostly interested in new 
properties subject of the subsequent sections. The present Section \ref{S3.1} is an exception. Here we are dealing with embeddings between tempered
homogeneous spaces $\Aas (\rn)$ and compare them with related assertions for their inhomogeneous counterparts. In principle one could adopt the following point of view: One can ask of whether proofs of assertions for inhomogeneous spaces $\As (\rn)$ based on the Fourier-analytical definitions
\eqref{2.7}, \eqref{2.9} can be transferred to the homogeneous counterparts \eqref{2.85}, \eqref{2.87}. This works quite often but not always. But our
aim here is different. We prove some more sophisticated embeddings via heat kernels and demonstrate how to use that the spaces $\Aas (\rn)$ with $s>0$
coincide locally with $\As (\rn)$ (as already done in \cite{T15}). Recall our abbreviation
\begin{\eq}  \label{3.0a}
\Cas (\rn) = \Ba{}^s_{\infty, \infty} (\rn) \quad \text{and} \quad \Cc^s (\rn) = B^s_{\infty, \infty} (\rn), \quad -n\le s <0,
\end{\eq}
normed by
\begin{\eq}  \label{3.0b}
\| f \, | \Cas (\rn) \| = \sup_{x\in \rn, t>0} t^{-s/2} |W_t f(x)|, \quad \|f\, | \Cc^s (\rn) \| = \sup_{x\in \rn, 0<t<1} t^{-s/2} |W_t f(x)|,
\end{\eq}
\eqref{2.29}, \eqref{2.21}.   

\begin{theorem}   \label{T3.1}
Let $n\in \nat$.
\\[0.1cm]
{\upshape (i)} Let $-n<s<0$ and $0<q_0 <q_1 <q_2 \le \infty$. Then
\begin{\eq}   \label{3.1}
\Ba{}^s_{\infty,q_0} (\rn) \hra \Ba{}^s_{\infty, q_1} (\rn) \hra \Fa{}^s_{\infty, q_1} (\rn) \hra \Fa{}^s_{\infty, q_2} (\rn) \hra \Cas (\rn)
\end{\eq}
and
\begin{\eq}   \label{3.2}
B^s_{\infty,q_0} (\rn) \hra B^s_{\infty, q_1} (\rn) \hra F^s_{\infty, q_1} (\rn) \hra F^s_{\infty, q_2} (\rn) \hra \Cc^s (\rn).
\end{\eq}
{\upshape (ii)} Let $-n <s<0$, $0<p<\infty$ and $0<q\le \infty$. Then
\begin{\eq}   \label{3.3}
\Ba{}^{s+ \frac{n}{p}}_{p,\infty} (\rn) \hra \Fa{}^s_{\infty,q} (\rn) \quad \text{and} \quad B^{s+ \frac{n}{p}}_{p,\infty} (\rn) \hra F^s_{\infty,q}
(\rn).
\end{\eq}
{\upshape (iii)} Let $0<p<\infty$, $0<q \le \infty$ and $n(  \frac{1}{p} - 1) <s< \frac{n}{p}$. Let
\begin{\eq}   \label{3.4}
0<p_0 <p < p_1 \le \infty \quad \text{and} \quad s_0 - \frac{n}{p_0} = s - \frac{n}{p} = s_1 - \frac{n}{p_1}.
\end{\eq}
Then
\begin{\eq}   \label{3.5}
\Ba{}^{s_0}_{p_0,u} (\rn) \hra \Fas (\rn) \hra \Ba{}^{s_1}_{p_1,v} (\rn)
\end{\eq}
if,  and only if, $0<u\le p \le v \le \infty$, and
\begin{\eq}   \label{3.6}
B^{s_0}_{p_0,u} (\rn) \hra \Fs (\rn) \hra B^{s_1}_{p_1,v} (\rn)
\end{\eq}
if, and only if, $0<u\le p \le v \le \infty$.
\end{theorem}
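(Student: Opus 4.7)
The plan combines two tools throughout: (A) the heat-kernel admissible quasi-norms \eqref{2.24}--\eqref{2.26} and those in Definition \ref{D2.8}, for direct estimates on $W_\tau f$; and (B) the Fourier-analytic domestic quasi-norms \eqref{2.85}, \eqref{2.87} (and their Carleson-type counterpart for $\Fa{}^s_{\infty,q}$ recorded in Remark \ref{R2.5}), which have the same shape as the inhomogeneous ones except that the dyadic sum runs over $j\in\ganz$ in place of $j\in\no$; hence any Fourier-analytic embedding proof for inhomogeneous spaces transcribes unchanged to the tempered homogeneous setting.

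For part (i) I would establish the chain link by link. The $B\hra B$ and $F\hra F$ monotonicities in $q$, as well as the endpoint $\Fa{}^s_{\infty,q_2}\hra\Cas=\Ba{}^s_{\infty,\infty}$, are transferred from the classical inhomogeneous chain \eqref{3.2} (see \cite{FrJ90, T92}) via the domestic quasi-norms. The middle embedding $\Ba{}^s_{\infty,q_1}\hra\Fa{}^s_{\infty,q_1}$ has a one-line heat-kernel proof: the pointwise bound $|W_\tau f(y)|\le\|W_\tau f\,|\,L_\infty(\rn)\|$ gives, upon integration in $y\in B(x_0,\sqrt{t_0})$, a factor $\sim t_0^{n/2}$ that cancels the $t_0^{-n/2}$ in \eqref{2.26}, leaving exactly \eqref{2.24}. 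The inhomogeneous chain \eqref{3.2} is argued analogously.

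For part (ii), given $f\in\Ba{}^{s+\frac{n}{p}}_{p,\infty}(\rn)$ I would use two simultaneous estimates on $W_\tau f$: the $L_\infty$-bound $\|W_\tau f\,|\,L_\infty\|\le c\tau^{s/2}\|f\,|\,\Cas\|$, where the embedding $\Ba{}^{s+\frac{n}{p}}_{p,\infty}\hra\Cas$ is encoded in Definition \ref{D2.8}(i) through the $\Ca{}^{-n/r}$-term with $-n/r=s$; and the $L_p$-bound $\|W_\tau f\,|\,L_p\|\le c\tau^{(s+\frac{n}{p})/2}\|f\,|\,\Ba{}^{s+\frac{n}{p}}_{p,\infty}\|_m$, direct from \eqref{2.24} when $s+n/p<0$ and extracted from the $\pa_\tau^m W_\tau f$ term of \eqref{2.69} (or from the domestic Fourier-analytic norm) when $s+n/p\ge 0$. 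In \eqref{2.26} I would then bound $\int_{|y-x_0|\le\sqrt{t_0}}|W_\tau f(y)|^q\,\di y$ by H\"older: by $ct_0^{n(1-q/p)/2}\|W_\tau f\,|\,L_p\|^q$ when $q\le p$, and by $\|W_\tau f\,|\,L_\infty\|^{q-p}\|W_\tau f\,|\,L_p\|^p$ when $q>p$. In either case, inserting the two bounds and performing the elementary time-integration $\int_0^{t_0}\tau^{nq/(2p)-1}\,\di\tau\sim t_0^{nq/(2p)}$, in combination with the $t_0^{-n/2}$ prefactor in \eqref{2.26}, yields a bound on $\|f\,|\,\Fa{}^s_{\infty,q}\|$ independent of $(x_0, t_0)$. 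The inhomogeneous counterpart follows by the same computation with \eqref{2.16}, \eqref{2.17}.

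Part (iii) is the Jawerth--Franke theorem in the two settings. The inhomogeneous embeddings \eqref{3.6}, together with sharpness in $u, v$, are classical and in final form due to \cite{SiT95}, based on \cite{Jaw77, Fra86}. Their proofs rest purely on dyadic Fourier-analytic decomposition and maximal-function bounds on the pieces $(\vp^j\wh f)^\vee$; these arguments transcribe literally when the dyadic resolution \eqref{2.32}, \eqref{2.33} replaces the inhomogeneous one, and via the domestic quasi-norms \eqref{2.85}, \eqref{2.87} they produce \eqref{3.5} with the same necessary and sufficient conditions on $u, v$. The principal obstacle across the whole theorem is the regime $s+n/p\ge 0$ in part (ii): for such smoothness $\|W_\tau f\,|\,L_p\|$ need not be finite on individual elements (Riesz-type kernels being the standard example), so the $L_p$-bound above must be produced from the \emph{full} structure of Definition \ref{D2.8}(i)---combining derivatives $\pa_\tau^m W_\tau f$ with the $\Cas$-term---or, more cleanly, by appealing to the domestic Fourier-analytic norm, which sidesteps the failure of individual $L_p$ norms.
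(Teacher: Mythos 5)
Your sketch is on the right track for part (i), but parts (ii) and (iii) differ from the paper's argument in ways that leave genuine gaps.

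For part (ii), the paper does \emph{not} attempt to produce an $L_p$-bound on $W_\tau f$ when $s+\frac{n}{p}\ge 0$; instead it first shrinks $q$ via part (i) so that $q<p$, and then invokes the embedding $\Ba{}^{s+n/p_1}_{p_1,\infty}(\rn)\hra\Ba{}^{s+n/p_2}_{p_2,\infty}(\rn)$ for $p_1<p_2$ (the homogeneous analogue of \cite[Theorem 2.7.1]{T83}, valid via the domestic quasi-norms) to replace $p$ by a larger $p_2$ with $s+\frac{n}{p_2}<0$. Once in that regime, \eqref{2.24} gives $\|W_\sigma f\,|\,L_p(\rn)\|\le\sigma^{(s+n/p)/2}\|f\,|\,\Ba{}^{s+n/p}_{p,\infty}(\rn)\|$ directly and a single H\"older step as in \eqref{3.13} closes the argument. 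Your plan, by contrast, tries to salvage the $L_p$-bound for $s+\frac{n}{p}\ge 0$ by ``extracting'' it from the $\pa_\tau^m W_\tau f$ term or from the domestic Fourier norm; this cannot work, because for $s+\frac{n}{p}>0$ the quantity $\|W_\tau f\,|\,L_p(\rn)\|$ is generically \emph{infinite} on elements of $\Ba{}^{s+n/p}_{p,\infty}(\rn)$. A concrete obstruction: by Theorem \ref{T3.9}, $h_{-s}(x)=|x|^{s}$ lies in $\Ba{}^{s+n/p}_{p,\infty}(\rn)$, but $W_\tau h_{-s}$ decays like $|x|^{s}$ at infinity and is therefore not in $L_p(\rn)$ when $-sp\le n$, i.e.\ precisely when $s+\frac{n}{p}\ge 0$. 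No combination of the terms in \eqref{2.69} can create an $L_p$ estimate that is simply false. The missing ingredient is the $p$-enlargement step. (Also, your stated time-integration exponent $\tau^{nq/(2p)-1}$ is correct only for $q\le p$; in the $q>p$ split the exponent becomes $\tau^{n/2-1}$, which cancels the $t_0^{-n/2}$ prefactor directly. That is a minor slip and not the real issue.)

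For part (iii), the claim that the Fourier-analytic Jawerth--Franke proof ``transcribes literally'' via \eqref{2.85}, \eqref{2.87} glosses over the distinction between \emph{domestic} and \emph{admissible} quasi-norms that the paper takes care to maintain (see the discussion around \eqref{2.22} and Remark \ref{R2.5}). The domestic quasi-norms are only declared equivalent \emph{on} the already defined space $\Aas(\rn)$; finiteness of a domestic quasi-norm for some $f\in S'(\rn)$ is not, by itself, a membership criterion. The paper instead argues as follows: for $s_1>0$ it first establishes \eqref{3.15} for $f$ with $\supp f\subset\ol{\Om}$, using the local coincidence $\|g\,|\,\Aa{}^\sigma_{p,q}(\rn)\|\sim\|g\,|\,A^\sigma_{p,q}(\rn)\|$ from \cite[Proposition 3.52]{T15}; then extends to all compactly supported $f$ by the dilation homogeneity \eqref{2.83}; then to general $f$ by the Fatou property as in Remark \ref{R2.11}; and finally covers all remaining $s_1$ by the lift $\dot{I}_\sigma$ from \eqref{3.16} together with the extra argument \eqref{3.17}--\eqref{3.19} for $p_1=\infty$. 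The sharpness of the range $u\le p\le v$ is likewise pushed back to the inhomogeneous statement by localization. You should incorporate this chain explicitly; without it, the passage from a finite domestic quasi-norm to actual membership in $\Fas(\rn)$ is not justified. Part (i), by contrast, is fine: your mixture of the one-line heat-kernel bound for $\Ba{}^s_{\infty,q_1}(\rn)\hra\Fa{}^s_{\infty,q_1}(\rn)$ with transfer of the $q$-monotonicities is a legitimate alternative to the paper's purely heat-kernel chain in \eqref{3.7}--\eqref{3.11}.
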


\begin{proof}
{\em Step 1.} We prove part (i). Let $f\in \Ba{}^s_{\infty,q_1} (\rn)$. Then one has by \eqref{2.26} and \eqref{2.24} with $p=\infty$
\begin{\eq}  \label{3.7}
\begin{aligned}
\| f \, | \Fa{}^s_{\infty,q_1} (\rn) \| &\le c \, \sup_{x\in \rn, t>0} \Big( \int^t_0 \tau^{-sq_1/2} \sup_{z\in \rn} |W_\tau f(z)|^{q_1} \frac{\di 
\tau}{\tau} \Big)^{1/q_1} \\
&= c \, \| f \, | \Ba{}^s_{\infty, q_1} (\rn) \|.
\end{aligned}
\end{\eq}
This proves the second embedding in \eqref{3.1}.
The proof of the last embedding in \eqref{3.1} with $q_2 <\infty$ relies on the sub-mean value property of the heat equation
\begin{\eq}   \label{3.8}
\big| W_t f(x) \big|^{q_2} \le c \, t^{- \frac{n}{2} -1} \int^t_{t/2} \int_{|x-y| \le \sqrt{t}} \big| W_\tau f(y) \big|^{q_2} \, \di y \, \di \tau,
\quad x \in \rn, \quad t>0,
\end{\eq}
\cite[(3.16)]{T15} with a reference of \cite[Lemma 2, p.\,172]{Bui83}.  One has
\begin{\eq}   \label{3.9}
\begin{aligned}
t^{-s/2} \big| W_t f(x)\big| &\le c \, \Big( t^{-n/2} \int^t_0 \int_{|x-y|\le \sqrt{t}} \tau^{-sq_2/2} \big| W_\tau f(y) \big|^{q_2} \, \di y \, \frac{\di \tau}{\tau} \Big)^{1/q_2} \\
&\le c \, \| f \, | \Fa{}^s_{\infty,q_2} (\rn) \|
\end{aligned}
\end{\eq}
where we used again \eqref{2.26}.  Then the last embedding in \eqref{3.1} with $q_2 <\infty$ follows from \eqref{3.0b}. If $q_2 =\infty$ then one has
\eqref{2.28}, where the indicated short proof with a reference to \cite[p.\,47]{T15} is just a modification of \eqref{3.8}, \eqref{3.9}. Let $f\in
\Fa{}^s_{\infty,q_1} (\rn)$. By \eqref{2.26}, \eqref{3.0b} and what we already know the last but one inequality in \eqref{3.1} follows from
\begin{\eq}   \label{3.10}
\begin{aligned}
\| f \, |\Fa{}^s_{\infty, q_2} (\rn) \| & \le \Big( \sup_{\tau >0, y \in \rn} \tau^{-s(q_2 - q_1)/2} \big| W_\tau f(y) \big|^{q_2-q_1} \Big)^{1/q_2} \\
&\quad \times \sup_{x\in \rn, t>0} \Big(t^{-n/2} \int^t_0 \int_{|x-y| \le \sqrt{t}} \tau^{- sq_1/2} \big| W_\tau f(y) \big|^{q_1} \, \di y \, 
\frac{\di \tau}{\tau} \Big)^{1/q_2} \\
&\le c \, \| f\, | \Cas (\rn) \|^{1- \frac{q_1}{q_2}} \, \| f \, | \Fa{}^s_{\infty, q_1} (\rn)  \|^{\frac{q_1}{q_2}} \\
& \le c' \, \| f \, | \Fa{}^s_{\infty, q_1} (\rn) \|.
\end{aligned}
\end{\eq}
Based on \eqref{2.24} and \eqref{3.0b} one can argue similarly to prove the first embedding in \eqref{3.1},
\begin{\eq}   \label{3.11}
\begin{aligned}
\|f \, | \Ba{}^s_{\infty,q_1} (\rn) \| & \le c \, \| f\, | \Cas (\rn)\|^{1- \frac{q_0}{q_1}} \Big( \int^\infty_0 \tau^{-q_0/2} \sup_{z \in \rn}
\big| W_\tau f(z) \big|^{q_0} \, \frac{\di \tau}{\tau} \Big)^{1/q_1} \\
&\le c' \, \| f \, | \Ba{}^s_{\infty,q_0} (\rn) \|.
\end{aligned}
\end{\eq}
This completes the proof of \eqref{3.1}. In the same way one can prove the inhomogeneous counterpart \eqref{3.2} using \eqref{2.16}--\eqref{2.18} and
\eqref{3.0b}.
\\[0.1cm]
{\em Step 2.} We prove \eqref{3.3}. By \eqref{3.1} we may assume $0<q<p< \infty$. Using the domestic equivalent quasi-norms \eqref{2.87} one obtains
\begin{\eq}   \label{3.12}
\Ba{}^{s+ \frac{n}{p_1}}_{p_1,q} (\rn) \hra \Ba{}^{s+ \frac{n}{p_2}}_{p_2,q} (\rn), \qquad 0<p_1 <p_2 \le \infty,
\end{\eq}
in the same way as in the inhomogeneous case, \cite[Theorem 2.7.1, p.\,129]{T83}. This shows that in addition to $0<q<p<\infty$ one may assume $-n <s<
s + \frac{n}{p} <0$. We rely again on \eqref{2.26}, \eqref{2.24} and assume $f\in \Ba{}^{s+ \frac{n}{p}}_{p, \infty} (\rn)$. One has by H\"{o}lder's
inequality
\begin{\eq}   \label{3.13}
\begin{aligned}
&t^{-\frac{n}{2}} \int^t_0 \int_{|x-y| \le \sqrt{t}} \tau^{- \frac{sq}{2}} | W_\tau f(y) |^q \, \di y \frac{\di \tau}{\tau} \\
& \qquad \le c\, \int^t_0  \tau^{- \frac{sq}{2}} \Big( t^{- \frac{n}{2}} \int_{|x-y| \le \sqrt{t}} |W_\tau f(y) |^p \, \di y \Big)^{\frac{q}{p}} 
\frac{\di \tau}{\tau} \\
& \qquad \le c' \sup_{\sigma >0} \sigma^{- \frac{q}{2} (s+ \frac{n}{p})} \, \| W_\sigma f \, | L_p (\rn) \|^q \, t^{-\frac{nq}{2p}} \int^t_0 
\tau^{\frac{nq}{2p}} \, \frac{\di \tau}{\tau} \\
&\qquad \le c'' \, \| f \, | \Ba{}^{s+\frac{n}{p}}_{p,\infty} (\rn) \|^q.
\end{aligned}
\end{\eq}
This proves the first embedding in \eqref{3.3}. Similarly one justifies the second embedding based on \eqref{2.16}, \eqref{2.18}.
\\[0.1cm]
{\em Step 3}. The sharp embedding \eqref{3.6} for inhomogeneous spaces goes back to \cite{SiT95}, based on \cite{Jaw77, Fra86}. One may also consult
\cite[p.\,44]{ET96} and the additional references within. If $0<p<\infty$, $0<q\le \infty$ and $\sigma_p <\sigma < n/p$ then the spaces $A^\sigma_{p,q} (\rn)$ and $\Aa{}^\sigma_{p,q} (\rn)$ coincide locally,
\begin{\eq}   \label{3.14}
\| g \, | \Aa{}^\sigma_{p,q} (\rn) \| \sim \| g \, | A^\sigma_{p,q} (\rn) \|, \qquad g\in S' (\rn), \quad \supp g \subset \ol{\Om},
\end{\eq}
with, say, $\Om = \{ y: \ |y| <1 \}$. This is covered by \cite[Proposition 3.52, p.\,107]{T15}. Then one has for some $c_1 >0$, $c_2 >0$
\begin{\eq}   \label{3.15}
c_1 \, \| f \, | \Ba{}^{s_1}_{p_1,v} (\rn) \| \le \|f\, | \Fas (\rn) \| \le c_2 \, \| f \, | \Ba{}^{s_0}_{p_0,u} (\rn) \|
\end{\eq}
if, in addition $s_1 >0$ (then also $s>0$, $s_0 >0$), and $\supp f  \subset \ol{\Om}$. All spaces in \eqref{3.15} have the same homogeneity $s - 
\frac{n}{p}$ according to \eqref{2.83}. Then one can extend \eqref{3.15} by homogeneity to all compactly supported functions $f$. The rest is now a
matter of the Fatou property of the underlying spaces as explained in Remark \ref{R2.11} resulting in \eqref{3.5} so far for $s_1 >0$. By 
\cite[Proposition 3.41, p.\,97]{T15}
\begin{\eq}   \label{3.16}
\dot{I}_\sigma \Aas (\rn) = \Aa{}^{s-\sigma}_{p,q} (\rn), \qquad \dot{I}_\sigma f = \big( |\xi|^\sigma \wh{f} \big)^\vee,
\end{\eq}
is a lift within the distinguished strip. This extends \eqref{3.5} to all spaces which can be reached in this way, based on what we already know. This
covers all spaces with $p_1 <\infty$. If $p_1 =\infty$ and $p<p_2 <\infty$, $ s_2 - \frac{n}{p_2} =s_1$, then one has by \eqref{3.5}  with 
$\Ba{}^{s_2}_{p_2,v} (\rn)$ on the right-hand side and \eqref{3.12}
\begin{\eq}   \label{3.17}
\Fas (\rn) \hra \Ba{}^{s_2}_{p_2,v} (\rn) \hra \Ba{}^{s_1}_{\infty, v} (\rn), \qquad p\le v.
\end{\eq}
As for the sharpness of
\begin{\eq}   \label{3.18}
\Fas (\rn) \hra \Ba{}^{s_1}_{\infty, v} (\rn), \qquad p\le v
\end{\eq}
we may assume by lifting $s>0$. Recall $s_1 <0$. Then one obtains by \eqref{2.34} and \eqref{3.14}
\begin{\eq}   \label{3.19}
\Fs (\rn) \hra \Ba{}^{s_1}_{\infty, v} (\rn) \hra B^{s_1}_{\infty, v} (\rn)
\end{\eq}
if $\supp f \subset \ol{\Om}$. But the embedding \eqref{3.6} and its sharpness is a local matter. This means that \eqref{3.19} requires $p\le v$. 
This proves part (iii).
\end{proof}

\begin{remark}   \label{R3.2}
The embedding \eqref{3.3} for the inhomogeneous spaces is known and goes back to \cite[Lemma 16, p.\,253]{Mar95} (with a short Fourier-analytical 
proof). 
\end{remark}

\subsection{Global inequalities for heat equations}   \label{S3.2}
Let $\As (\rn)$ be the inhomogeneous spaces as introduced in Section \ref{S2.1} and let $W_t w$ be given by \eqref{2.12}, \eqref{2.13}. Let $1\le p,q
\le \infty$ ($p<\infty$ for $F$-spaces), $s\in \real$ and $d\ge 0$. According to \cite[Theorem 4.1, p.\,114]{T14} there is a constant $c>0$ such that
for all $t$ with $0<t \le 1$ and all $w\in \As (\rn)$,
\begin{\eq}   \label{3.20}
t^{d/2} \| W_t w \, | A^{s+d}_{p,q} (\rn) \| \le c\, \| w \, | \As (\rn) \|.
\end{\eq}
We ask for a counterpart in terms of the tempered homogeneous spaces $\Aas (\rn)$ according to Definition \ref{D2.8}(i) in the distinguished strip
\eqref{2.67}.

\begin{theorem}   \label{T3.3}
Let $n\in \nat$. Let $1<p<\infty$, $1\le q \le \infty$ and
\begin{\eq}   \label{3.21}
n \big( \frac{1}{p} - 1 \big) <s<0< s+d < \frac{n}{p}.
\end{\eq}
Then there is constant $c>0$ such that for all $t$, $t>0$, and all $w \in \Aas(\rn)$,
\begin{\eq}   \label{3.22}
t^{d/2} \, \| W_t w \, | \Aa{}^{s+d}_{p,q} (\rn) \| \le c \, \| w \, | \Aas (\rn) \|.
\end{\eq}
\end{theorem}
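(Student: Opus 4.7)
The plan is to reduce the claim to the case $t=1$ by a scaling argument and then verify the $t=1$ case through the heat-kernel characterization of the spaces. First I would exploit the scaling property of the Gauss-Weierstrass semi-group: a direct change of variables in \eqref{2.12} shows $W_1[w(\sqrt{t}\,\cdot)](x) = (W_t w)(\sqrt{t}\,x)$ for every $t>0$. Combined with the homogeneity identity \eqref{2.83} applied at scale $\lambda=\sqrt{t}$ for both $\Aas (\rn)$ and $\Aa{}^{s+d}_{p,q} (\rn)$, the $t=1$ version yields, after rearrangement, exactly the factor $t^{d/2}$ on the left-hand side of \eqref{3.22}. Hence it suffices to prove the uniform bound $\| W_1 w \, | \Aa{}^{s+d}_{p,q} (\rn) \| \le c \, \| w \, | \Aas (\rn) \|$.

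For this $t=1$ estimate I would use the admissible quasi-norm of $\Aa{}^{s+d}_{p,q} (\rn)$ from Definition \ref{D2.8}(i), which, since $0<s+d<n/p$ and $1<p<\infty$, consists of a $\tau$-weighted $L_q\bigl((0,\infty);\frac{d\tau}{\tau}\bigr)$-type integral of $\pa_\tau^m W_\tau(\cdot)$, plus a $\Ca{}^{-n/r'} (\rn)$ summand with $-n/r' = s+d - \frac{n}{p}$, for any integer $m$ with $m>(s+d)/2$. Applied to $f=W_1 w$, the semigroup identity $\pa_\tau^m W_\tau(W_1 w) = (\pa_\sigma^m W_\sigma w)|_{\sigma=\tau+1}$ together with the substitution $\sigma = \tau+1$ turns the $\tau$-integration over $(0,\infty)$ into a $\sigma$-integration over $[1,\infty)$; since $s+d>s$, one has $\sigma^{(m-(s+d)/2)q-1}\le \sigma^{(m-s/2)q-1}$ for all $\sigma\ge 1$. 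Extending $\sigma$ back to $(0,\infty)$ and then taking $L_p$ bounds the integral part of the admissible norm of $W_1 w$ by the domestic quasi-norm \eqref{2.84} (in the $F$-case) or \eqref{2.86} (in the $B$-case) of $w$ in $\Aas (\rn)$.

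The remaining lower-order summand $\| W_1 w \, | \Ca{}^{-n/r'} (\rn) \|$ is handled separately. By \eqref{2.91} one has $\Aas (\rn) \hra \Ca{}^{-n/r} (\rn)$ with $-n/r = s - n/p$, so $n/r - n/r' = d$. Unpacking the admissible norm of $\Ca{}^{-n/r'} (\rn)$ and using $W_\tau W_1 = W_{\tau+1}$,
\[
\| W_1 w \, | \Ca{}^{-n/r'} (\rn) \| = \sup_{x\in \rn,\, \tau>0}\tau^{n/(2r')}\,|W_{\tau+1} w(x)| \le \Big( \sup_{\tau>0}\tau^{n/(2r')}(\tau+1)^{-n/(2r)} \Big) \, \| w \, | \Ca{}^{-n/r} (\rn) \|,
\]
and the bracketed supremum is finite because the weight behaves like $\tau^{n/(2r')}$ near $\tau=0$ and like $\tau^{-d/2}$ at infinity. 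Combining this with the preceding paragraph yields the $t=1$ bound, and hence \eqref{3.22}.

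The main obstacle, in my view, is bookkeeping rather than analysis: one must ensure that the target $\Aa{}^{s+d}_{p,q} (\rn)$ genuinely lies in the distinguished strip of Definition \ref{D2.8}(i) (clear from $1<p<\infty$ and $0<s+d<n/p$) and that $1<r,r'<\infty$ so that both $\Ca{}^{-n/r}$ and $\Ca{}^{-n/r'}$ are legitimate targets in \eqref{2.91}. A secondary check is that the integer $m$ can be chosen to exceed both $s/2$ and $(s+d)/2$ at once, which is immediate.
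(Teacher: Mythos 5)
Your scaling reduction to $t=1$ is in substance the same trick the paper uses (the paper first proves the bound for $0<t\le 1$ and then scales with \eqref{3.24} and \eqref{2.83}, which is the same calculation run in the other direction). Where you genuinely diverge from the paper is in how the $t\le 1$ bound is obtained. The paper is quite short: it invokes the known inhomogeneous estimate \eqref{3.20} from \cite[Theorem 4.1]{T14} and sandwiches it between the embeddings $\Aas(\rn)\hra\As(\rn)$ (from \eqref{2.88}, using $s<0$) and $A^{s+d}_{p,q}(\rn)\hra\Aa{}^{s+d}_{p,q}(\rn)$ (from \eqref{2.91}, using $s+d>0=\sigma_p$). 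Your proposal instead re-derives the $t=1$ estimate directly from the admissible heat-kernel norms of Definition \ref{D2.8}(i), which keeps the argument self-contained within this paper at the cost of more bookkeeping. Both are legitimate, but yours needs one repair.

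The gap is in the main-integral estimate. After the semigroup identity $\pa_\tau^m W_\tau(W_1 w)=\pa_\sigma^m W_\sigma w\big|_{\sigma=\tau+1}$ and the substitution $\sigma=\tau+1$, the weight is $(\sigma-1)^{(m-\frac{s+d}{2})q-1}$, not $\sigma^{(m-\frac{s+d}{2})q-1}$. Your monotonicity step $\sigma^{(m-\frac{s+d}{2})q-1}\le\sigma^{(m-\frac{s}{2})q-1}$ for $\sigma\ge1$ is correct, but it does not apply to the actual integrand: when $(m-\frac{s+d}{2})q<1$, the factor $(\sigma-1)^{(m-\frac{s+d}{2})q-1}$ blows up as $\sigma\to1^+$, while $\sigma^{(m-\frac{s}{2})q-1}$ stays bounded there, so there is no pointwise comparison. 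Your check that $m$ need only exceed both $s/2$ and $(s+d)/2$ is therefore not enough. The fix is simple and available: since any $s/2<m\in\no$ (with the analogous lower bound for $s+d$) produces an equivalent admissible/domestic quasi-norm by Theorem \ref{T2.10}, pick $m$ so large that $(m-\frac{s+d}{2})q\ge1$; this is always possible because the theorem assumes $q\ge1$. With that choice $(\sigma-1)^{(m-\frac{s+d}{2})q-1}\le\sigma^{(m-\frac{s+d}{2})q-1}\le\sigma^{(m-\frac{s}{2})q-1}$ on $[1,\infty)$, and the rest of your estimate (extending $\sigma$ to $(0,\infty)$ and comparing to \eqref{2.84} or \eqref{2.86}) goes through. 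A small secondary point: the embedding $\Aas(\rn)\hra\Ca{}^{-n/r}(\rn)$ that you use for the anchor term is, for $s<0$, built directly into the admissible quasi-norm \eqref{2.68}/\eqref{2.69} of Definition \ref{D2.8}(i), rather than following from \eqref{2.91}, whose hypothesis $\sigma_p<s$ is not met here.
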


\begin{proof}
By \eqref{3.20} on the one hand and \eqref{2.88}, \eqref{2.91} on the other hand one has
\begin{\eq}   \label{3.23}
t^{d/2} \| W_t w \, | \Aa{}^{s+d}_{p,q} (\rn) \| \le c \, \|w \, | \Aas (\rn) \|, \qquad 0<t \le 1.
\end{\eq}
Furthermore,
\begin{\eq}   \label{3.24}
\big( W_{t \lambda^2} w \big)(\lambda x) = W_t w(\lambda \cdot) (x), \qquad x\in \rn, \quad t>0, \quad \lambda >0,
\end{\eq}
is an immediate consequence of \eqref{2.12}, \eqref{2.13}. We replace $w$ in \eqref{3.23} by $w(\lambda \cdot)$, $\lambda >0$. Using the homogeneity
\eqref{2.83} one obtains
\begin{\eq}  \label{3.24a}
t^{d/2} \lambda^{s+d-\frac{n}{p}} \| W_{t \lambda^2} w \, | \Aa{}^{s+d}_{p,q} (\rn) \| \le c\, \lambda^{s- \frac{n}{p}} \| w \, | \Aas (\rn) \|.
\end{\eq}
This proves \eqref{3.22} for all $w\in \Aas (\rn)$ and all $t>0$.
\end{proof}

\begin{remark}   \label{R3.4}
One can extend the above theorem to the limiting cases covered by Theorem \ref{T2.10}. Of special interest may be the spaces with $s+d = n/p$. In
particular, if
\begin{\eq}   \label{3.25}
1<p<\infty, \quad n \big( \frac{1}{p} -1 \big) <s<0 \quad \text{and} \quad s+d = \frac{n}{p},
\end{\eq}
then
\begin{\eq}   \label{3.26}
t^{\frac{n}{2p}} \, \| W_t w \, | \Ba{}^{\frac{n}{p}}_{p,1} (\rn) \| \le c \, t^{\frac{s}{2}} \, \| w \, | \Ba{}^s_{p,1} (\rn) \|
\end{\eq} 
for all $w\in \Ba{}^s_{p,1} (\rn)$ and all $t>0$. The special case $n<p<\infty$ and $s=-1 + \frac{n}{p}$ may be of special interest in the
context of Navier-Stokes equations, that is
\begin{\eq}   \label{3.26a}
\sqrt{t} \ \| W_t w \, | \Ba{}^{\frac{n}{p}}_{p,1} (\rn) \| \le c \, \| w \, | \Ba{}^{\frac{n}{p} - 1}_{p,1} (\rn) \|
\end{\eq}
for all $w\in \Ba{}^{\frac{n}{p} -1}_{p,1} (\rn)$ and all $t>0$.
\end{remark}

\subsection{Hardy inequalities}   \label{S3.3}
We have a closer look at some spaces $\Aas(\rn)$ according to Definition \ref{D2.8}(i) consisting entirely of regular tempered distributions. Let
$0<p<\infty$, $0<q\le \infty$ and
\begin{\eq}   \label{3.27}
\sigma_p = \max \Big( 0, n \big( \frac{1}{p} - 1 \big) \Big) <s< \frac{n}{p}, \qquad - \frac{n}{r} = s - \frac{n}{p}.
\end{\eq}
Then $1<r<\infty$. Let $s/2 <m\in \nat$. Then $\Fas(\rn)$ is the collection of all regular tempered distributions $f\in S'(\rn) \cap \Lloc (\rn)$ such
that
\begin{\eq}   \label{3.28}
\Big\| \Big( \int^\infty_0 t^{(m- \frac{s}{2})q} \big| \pa^m_t W_t f(\cdot) \big|^q \frac{\di t}{t} \Big)^{1/q} | L_p (\rn) \Big\| + \| f \, | L_r (\rn) \|
\end{\eq}
is finite. This may be interpreted as an equivalent {\em domestic} quasi-norm in $\Fas (\rn)$ as introduced in Definition \ref{D2.8}(i) if $s>0$.
Furthermore,
\begin{\eq}   \label{3.29}
\| f \, |L_r (\rn) \| \le c\, \Big\| \Big( \int^\infty_0 t^{(m- \frac{s}{2})q} \big| \pa^m_t W_t f(\cdot) \big|^q \, \frac{\di t}{t} \Big)^{1/q} |
L_p (\rn) \Big\|,
\end{\eq}
$f\in \Fas (\rn)$. Similarly for $\Bas (\rn)$ under the additional restriction $0<q\le r$. Then
\begin{\eq}   \label{3.30}
\| f \, | L_r (\rn) \| \le c \, \Big( \int^\infty_0 t^{(m- \frac{s}{2})q} \big\| \pa^m_t W_t f \, | L_p (\rn) \big\|^q \frac{\di t}{t} \Big)^{1/q},
\end{\eq}
$f \in \Bas (\rn)$. Details may be found in \cite[Section 3.3, pp.\,57--62]{T15}. But instead of $L_r$, having the same homogeneity $-\frac{n}{r} =
s - \frac{n}{p}$ as $\Aas (\rn)$ one can use weighted $L_p$-spaces and weighted $L_q$-spaces with the same homogeneity. For this purpose we recall
first the {\em Hardy inequalities}  for the corresponding inhomogeneous spaces $\As(\rn)$.

Let $p,q,s$ and $r$ be as as above. Then there is a constant $c>0$ such that
\begin{\eq}   \label{3.31}
\int_{|x| \le 1} \big| \, |x|^{\frac{n}{r}} f(x) \big|^p \frac{\di x}{|x|^n} \le c \, \| f \, | \Fs (\rn) \|^p
\end{\eq}
for all $f\in \Fs (\rn)$. Let, in addition, $0<q\le r$. Then there is a constant $c>0$ such that
\begin{\eq}   \label{3.32}
\int_{|x| \le 1} \big| \, |x|^{\frac{n}{r}} f(x) \big|^q \, \frac{\di x}{|x|^n} \le c \, \| f \, | \Bs (\rn) \|^q
\end{\eq}
for all $f\in \Bs (\rn)$. Details and further explanations may be found in \cite[Theorem 16.3, p.\,238]{T01}. We extend these assertions to the related
tempered homogeneous spaces. Let $\sigma_p$ be as in \eqref{3.27}.

\begin{theorem}   \label{T3.5}
Let $0<p<\infty$,
\begin{\eq}   \label{3.33}
\sigma_p <s< \frac{n}{p}, \qquad -\frac{n}{r} = s - \frac{n}{p}.
\end{\eq}
{\upshape (i)} Let $0<q\le \infty$. Then there is a constant $c>0$ such that
\begin{\eq}   \label{3.34}
\int_{\rn} \big| \, |x|^{\frac{n}{r}} f(x) \big|^p \, \frac{\di x}{|x|^n} \le c\, \| f \, | \Fas (\rn) \|^p
\end{\eq}
for all $f\in \Fas (\rn)$.
\\[0.1cm]
{\upshape (ii)} Let $0<q\le r$. Then there  is a constant $c>0$ such that
\begin{\eq}   \label{3.35}
\int_{\rn} \big| \, |x|^{\frac{n}{r}} f(x) \big|^q \, \frac{\di x}{|x|^n} \le c \, \| f\, | \Bas (\rn) \|^q
\end{\eq}
for all $f\in \Bas (\rn)$.
\end{theorem}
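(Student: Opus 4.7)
The plan is to bootstrap from the inhomogeneous Hardy inequalities \eqref{3.31}, \eqref{3.32} by combining the local coincidence \eqref{3.14}, the homogeneity \eqref{2.83}, and a cut-off/monotone-convergence argument. I describe the argument for the $F$-statement \eqref{3.34}; the $B$-statement \eqref{3.35} goes through identically, the extra restriction $0<q\le r$ being inherited from \eqref{3.32}.

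Step 1 (support in the unit ball). Let $f\in \Fas(\rn)$ with $\supp f \subset \{|x|\le 1\}$. By the local coincidence \eqref{3.14} (Proposition 3.52 of \cite{T15}), one has $\| f\,|\Fs(\rn)\|\sim \|f\,|\Fas(\rn)\|$ under the present restrictions on $p,q,s$. Inserting this into \eqref{3.31} yields
\begin{equation*}
\int_{\rn} \big||x|^{n/r} f(x)\big|^p\,\frac{\di x}{|x|^n}\le c\,\|f\,|\Fas(\rn)\|^p .
\end{equation*}

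Step 2 (compactly supported $f$). For $f\in \Fas(\rn)$ with $\supp f\subset\{|x|\le R\}$, set $g(x)=f(Rx)$, so $\supp g\subset\{|x|\le 1\}$. Step 1 applied to $g$, combined with the homogeneity \eqref{2.83} (giving $\|g\,|\Fas\|^p = R^{(s-n/p)p}\|f\,|\Fas\|^p = R^{-np/r}\|f\,|\Fas\|^p$, since $sp-n=-np/r$) and the change of variables $y=Rx$ on the integral side (which produces exactly the same factor $R^{-np/r}$), leads after cancellation to
\begin{equation*}
\int_{|y|\le R} \big||y|^{n/r} f(y)\big|^p\,\frac{\di y}{|y|^n}\le c\,\|f\,|\Fas(\rn)\|^p
\end{equation*}
with a constant $c$ independent of $R$.

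Step 3 (approximation). For arbitrary $f\in\Fas(\rn)$, let $\psi_R(x)=\vp_0(x/R)$ with $\vp_0$ as in \eqref{2.3}, so that $\psi_R=1$ on $\{|x|\le R\}$ and $\supp\psi_R\subset\{|x|\le 3R/2\}$. The pointwise-multiplier reasoning alluded to in Remark \ref{R2.11} (the same device that justifies the passage $f\rightsquigarrow f^J\rightsquigarrow g^J\rightsquigarrow h^J$ in \cite[p.\,54]{T15}) gives $\|\psi_R f\,|\Fas(\rn)\|\le c\,\|f\,|\Fas(\rn)\|$ uniformly in $R\ge 1$. Applying Step 2 to $\psi_R f$ and using $\psi_R f=f$ on $\{|y|\le R\}$ yields
\begin{equation*}
\int_{|y|\le R} \big||y|^{n/r} f(y)\big|^p\,\frac{\di y}{|y|^n}\le c'\,\|f\,|\Fas(\rn)\|^p .
\end{equation*}
Since under \eqref{3.33} every $f\in\Fas(\rn)$ is a regular distribution (cf.\ the discussion around \eqref{3.28}, \eqref{3.29}), $|f|^p$ is a.e.\ defined and monotone convergence as $R\to\infty$ delivers \eqref{3.34}.

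Main obstacle. The only non-routine ingredient is the uniform-in-$R$ bound $\|\psi_R f\,|\Aas(\rn)\|\le c\,\|f\,|\Aas(\rn)\|$. This is a statement about smooth compactly supported pointwise multipliers on the tempered homogeneous spaces; in the homogeneous setting such multiplier assertions are slightly more delicate than in the inhomogeneous one. However, the homogeneity \eqref{2.83} collapses the uniformity in $R$ to a single multiplier bound for $\psi_1$, which is of the type used implicitly in Remark \ref{R2.11} and in \cite[p.\,54]{T15}. Once that is granted, the chain above closes without further effort and both \eqref{3.34} and (by the parallel argument based on \eqref{3.32}) \eqref{3.35} follow.
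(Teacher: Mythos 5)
Your proposal matches the paper's proof essentially step for step: localization to the unit ball via \eqref{3.31} and \eqref{3.14}, extension to arbitrary compact support by the homogeneity \eqref{2.83}, and a final limiting argument via the cut-offs $\vp_0(2^{-J}\cdot)f$ of Remark \ref{R2.11} (the paper phrases this as "a matter of the Fatou property", which is exactly your uniform multiplier bound plus a passage to the limit, for which your explicit monotone-convergence formulation is a clean realization). The "main obstacle" you flag --- the uniform-in-$R$ multiplier bound --- is precisely what Remark \ref{R2.11} supplies through the domestic quasi-norms \eqref{2.85}, \eqref{2.87}, so nothing is missing.
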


\begin{proof}
Using \eqref{3.31} and \eqref{3.14} one has
\begin{\eq}   \label{3.36}
\int_{\rn} \big| \, |x|^{\frac{n}{r}} f(x) \big|^p \, \frac{\di x}{|x|^n} \le c \, \|f \, | \Fas (\rn) \|^p
\end{\eq}
for all $f\in \Fas (\rn)$ with $\supp f \subset \{y: \, |y| \le 1\}$. Let $\lambda >1$ and $f\in \Fas (\rn)$ with $\supp f \subset \{ y: \, |y| \le 
\lambda\}$. We insert $f(\lambda \cdot)$ in \eqref{3.36}. By the homogeneity \eqref{2.83} and an obvious counterpart of the left-hand side of \eqref{3.36}
with the same homogeneity one can extend \eqref{3.36} to all compactly supported $f\in \Fas (\rn)$. The rest is now a matter of the Fatou property as
explained in Remark \ref{R2.11}. This proves \eqref{3.34}. The proof of \eqref{3.35} is similar.
\end{proof}

\begin{remark}   \label{R3.6}
Let $p,s$ be as in \eqref{3.33} and $0<q\le \infty$. Let $s/2 <m\in \nat$. Then both \eqref{3.28} and
\begin{\eq}   \label{3.37}
\Big\| \Big( \int^\infty_0 t^{(m- \frac{s}{2})q} \big| \pa^m_t W_t f(\cdot) \big|^q \, \frac{\di t}{t} \Big)^{1/q} | L_p (\rn) \Big\| + \Big( \int_{\rn}
\big| \, |x|^{\frac{n}{r}} f(x) \big|^p \, \frac{\di x}{|x|^n} \Big)^{1/p}
\end{\eq}
are equivalent {\em domestic} quasi-norms in $\Fas (\rn)$. Furthermore
\begin{\eq}   \label{3.38}
\Big( \int_{\rn}\big| \, |x|^{\frac{n}{r}} f(x) \big|^p \, \frac{\di x}{|x|^n} \Big)^{1/p} \le c\,
\Big\| \Big( \int^\infty_0 t^{(m- \frac{s}{2})q} \big| \pa^m_t W_t f(\cdot) \big|^q \, \frac{\di t}{t} \Big)^{1/q} | L_p (\rn) \Big\| 
\end{\eq}
and 
\begin{\eq}  \label{3.39}
\Big( \int_{\rn}\big| \, |x|^{\frac{n}{r}} f(x) \big|^p \, \frac{\di x}{|x|^n} \Big)^{1/p} \le c\,
\Big\| \big( \sum^\infty_{j=-\infty} 2^{jsq} \big| (\vp^j \wh{f} \, )^\vee (\cdot) \big|^q \Big)^{1/q} | L_p (\rn) \Big\|
\end{\eq}
for some $c>0$ and all
$f\in \Fas (\rn)$. Here $\{ \vp^j \}$ is the usual dyadic resolution of unity in $\rn \setminus \{0 \}$. This is covered by Theorem \ref{T2.10}.
If, in addition, $0<q\le r$ then
\begin{\eq}   \label{3.40}
\Big( \int^\infty_0 t^{(m - \frac{s}{2})q} \big\| \pa^m_t W_t f \, | L_p (\rn) \big\|^q \, \frac{\di t}{t} \Big)^{1/q} + \Big( \int_{\rn} \big| \,
|x|^{\frac{n}{r}} f(x) \big|^q \, \frac{\di x}{|x|^n} \Big)^{1/q}
\end{\eq}
is an equivalent {\em domestic} quasi-norm in $\Bas (\rn)$. There are obvious counterparts of \eqref{3.38}, \eqref{3.39}.
\end{remark}

\subsection{Multiplication algebras}   \label{S3.4}
Let $\As (\rn)$ be the inhomogeneous spaces as introduced in \eqref{2.6}--\eqref{2.9}.
Recall that $\As (\rn)$ is called a {\em multiplication algebra} if $f_1 f_2 \in \As (\rn)$ for any $f_1 \in \As (\rn)$, $f_2 \in \As (\rn)$ and if
there is a constant $c>0$ such that
\begin{\eq}   \label{3.40a}
\| f_1 f_2 \, | \As (\rn) \| \le c \, \|f_1 \, | \As (\rn) \| \cdot \|f_2 \, | \As (\rn) \|
\end{\eq}
for all $f_1 \in \As (\rn)$, $f_2 \in \As (\rn)$. As for (technical) details we refer the reader to \cite[Section 1.2.5]{T13}, \cite[Section 
3.2.4]{T14} and the literature within. Let $0<p,q \le \infty$ ($p<\infty$ for $F$-spaces) and $s\in \real$. Then the following assertions are pairwise
equivalent:
\\[0.1cm]
(a) $\As  (\rn)$ {\em is a multiplication algebra.}
\\[0.1cm]
(b) $s>0$ {\em and} $\As  (\rn) \hra L_\infty (\rn)$.
\\[0.1cm]
(c) {\em Either}
\begin{\eq}   \label{3.41}
\As (\rn) = \Bs (\rn) \ \text{{\em with}} \
\begin{cases}
s>n/p &\text{{\em where} $0<p,q \le \infty$}, \\
s= n/p &\text{{\em where} $0<p<\infty$, $0<q \le 1$,}
\end{cases}
\end{\eq}
{\em or}
\begin{\eq}   \label{3.42}
\As (\rn) = \Fs (\rn) \ \text{{\em with}} \
\begin{cases}
s> n/p &\text{{\em where} $0<p<\infty$, $0<q\le \infty$},\\
s=n/p  &\text{{\em where} $0<p\le 1$, $0<q \le \infty$}.
\end{cases}
\end{\eq}
These assertions have a long history, related references may be found in \cite[Section 1.2.3, pp.\,12--13]{T13}. Let $\Aas (\rn)$ be the tempered
homogeneous spaces as introduced in Definition \ref{D2.8}. One may again ask which of these spaces is a multiplication algebra with
\begin{\eq}   \label{3.43}
\| f_1 f_2 \, | \Aas (\rn) \| \le c \, \| f_1 \, | \Aas (\rn) \| \cdot \| f_2 \, | \Aas (\rn) \|
\end{\eq}
as the obvious counterpart of \eqref{3.40a}. If one replaces $f_1$ by $f_1 (\lambda \cdot)$ and $f_2$ by $f_2 (\lambda \cdot)$, $\lambda >0$, then it
follows from \eqref{2.83} that
\begin{\eq}   \label{3.44}
\| f_1 f_2 \, | \Aas (\rn) \| \le c \, \lambda^{s - \frac{n}{p}} \, \| f_1 \, | \Aas (\rn) \| \cdot \|f_2 \, | \Aas (\rn) \|.
\end{\eq}
In other words, only spaces with $s= n/p$ have a chance to be multiplication algebras. This applies to the spaces according to Definition 
\ref{D2.8}(iii),(iv) with the following outcome.

\begin{theorem}   \label{T3.7}
A space $\Aas (\rn)$ as introduced in Definition \ref{D2.8} is a multiplication algebra if, and only if, either
\begin{\eq}   \label{3.45}
\Aas (\rn) = \Fas (\rn), \qquad 0<p\le 1, \quad 0< q \le \infty, \quad s=n/p
\end{\eq}
or
\begin{\eq}   \label{3.46}
\Aas (\rn) = \Bas (\rn), \qquad 0<p<\infty, \quad 0<q\le 1, \quad s=n/p.
\end{\eq}
\end{theorem}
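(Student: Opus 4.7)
The necessity of $s = n/p$ follows at once from \eqref{3.44}: replacing $f_1, f_2$ by their dilates $f_i(\lambda\cdot)$ and using \eqref{2.83} twice on the right and once on the left produces the factor $\lambda^{s - n/p}$ in the product inequality, which may be sent to $0$ by taking $\lambda \to 0^+$ (if $s > n/p$) or $\lambda \to \infty$ (if $s < n/p$), forcing $f_1 f_2 = 0$ for all $f_i \in \Aas$—absurd since $S(\rn) \hra \Aas$ by Theorem \ref{T2.10}. So the remaining task is to decide, among the $s = n/p$ spaces in Definition \ref{D2.8}(iii),(iv), exactly which are multiplication algebras.

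My strategy is a local-to-global reduction to the classical inhomogeneous criterion \eqref{3.41}/\eqref{3.42}. The central ingredient would be an endpoint version of the local equivalence \eqref{3.14}, namely
\[
\|g \, | \Aas\| \sim \|g \, | \As\|, \qquad \supp g \subset \overline{\Omega} = \{|y| \le 1\}, \ s = n/p.
\]
The bound $\|g \, | \As\| \le c\,\|g \, | \Aas\|$ follows by restricting the admissible heat-kernel norms \eqref{2.66a}/\eqref{2.66b} to times $0 < t < 1$ (those are already dominated by \eqref{2.68}/\eqref{2.71}/\eqref{2.72}) and controlling the residual term $\|W_1 g \, | L_p\|$ by $\|g \, | L_\infty\|$ using $\supp g \subset \overline{\Omega}$ together with the embedding $\Aas \hra L_\infty$ from Theorem \ref{T2.10}(iii). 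The reverse inequality compares the domestic norms \eqref{2.85}/\eqref{2.87} to the inhomogeneous \eqref{2.74}: the extra low-frequency block $\sum_{j \le 0} 2^{jsq}\|(\vp^j \wh g)^\vee \, | L_p\|^q$ is controlled, for $\supp g \subset \overline{\Omega}$, by $\|g \, | L_\infty\|$ via a convolution/support argument analogous to \eqref{2.75}, and then by $\|g \, | \As\|$ through \eqref{2.78}/\eqref{2.79}.

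With this local equivalence in hand, sufficiency in the ranges \eqref{3.45}/\eqref{3.46} is immediate. For $f_1, f_2 \in \Aas$ with $\supp f_i \subset \overline{\Omega}$, chaining the local equivalence with \eqref{3.40a} applied to the inhomogeneous $\As$ gives
\[
\|f_1 f_2 \, | \Aas\| \le c\,\|f_1 f_2 \, | \As\| \le c'\,\|f_1 \, | \As\|\,\|f_2 \, | \As\| \le c''\,\|f_1 \, | \Aas\|\,\|f_2 \, | \Aas\|.
\]
Because $s - n/p = 0$, the homogeneity \eqref{2.83} leaves this inequality invariant under $f_i \mapsto f_i(\lambda\cdot)$, so it extends to arbitrary compactly supported $f_1, f_2 \in \Aas$. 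The Fatou property, applied via the truncations $f_i^J = \vp_0(2^{-J}\cdot) f_i$ as in Remark \ref{R2.11}, then promotes \eqref{3.43} to all of $\Aas$.

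Conversely, if $\Aas$ with $s = n/p$ is a multiplication algebra, then specializing \eqref{3.43} to elements supported in $\overline{\Omega}$ and invoking the local equivalence yields \eqref{3.40a} for $\As$ on the compactly supported subspace; this localized multiplication property is equivalent to the corresponding global one for $\As$, so the classical dichotomy \eqref{3.41}/\eqref{3.42} forces the parameters into \eqref{3.45}/\eqref{3.46}. The hard part of the plan is the endpoint local equivalence, since \eqref{3.14} in \cite{T15} is stated only for $\sigma_p < s < n/p$; fortunately the $L_\infty$-type term built into Definition \ref{D2.8}(iii),(iv) is precisely what is needed to supply the missing low-frequency control at $s = n/p$, and it is available exactly for the $(p,q)$ listed in the theorem—which is the reason the two ranges in the "if and only if" match the $L_\infty$-embedding ranges of Theorem \ref{T2.10}(iii).
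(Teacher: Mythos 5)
Your proposal follows essentially the same route as the paper: necessity of $s=n/p$ by the dilation argument after \eqref{3.44}, and sufficiency by chaining the endpoint local equivalence $\|g\,|\Aas\|\sim\|g\,|\As\|$ for compactly supported $g$ with the classical inhomogeneous multiplication-algebra property \eqref{3.41}/\eqref{3.42}, then extending by scale invariance (homogeneity $0$) and the Fatou property as in Remark \ref{R2.11}. The only differences are cosmetic: the paper disposes of the local equivalence at $s=n/p$ by citing \cite[Proposition 3.52]{T15} rather than sketching it, and your closing paragraph deriving the classical dichotomy for the "only if" direction is superfluous, since Definition \ref{D2.8} only defines $\Aas(\rn)$ with $s=n/p$ for the parameter ranges in \eqref{3.45}, \eqref{3.46} in the first place, so once $s<n/p$ is excluded by dilation there is nothing left to prove.
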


\begin{proof}
Let $f_1, f_2 \in \Aas (\rn)$ with $\Aas (\rn)$ as in \eqref{3.45}, \eqref{3.46}. Let, in addition, $\supp f_1 \subset \ol{\Om}$, $\supp f_2 \subset
\ol{\Om}$ with $\Om = \{y: \ |y| <1 \}$. Then \eqref{3.14} can be extended to the spaces in \eqref{3.45}, \eqref{3.46}, again with a reference to
\cite[Proposition 3.52, p.\,107]{T15}, that is
\begin{\eq}   \label{3.47}
\| f_k \, | \Aa{}^{n/p}_{p,q} (\rn) \| \sim \| f_k \, | A^{n/p}_{p,q} (\rn) \|, \qquad k=1,2.
\end{\eq}
According to \eqref{3.41}, \eqref{3.42} the spaces $A^{n/p}_{p,q} (\rn)$ in question are multiplication algebras. Using \eqref{3.44} with $s=n/p$ it
follows
\begin{\eq}   \label{3.48}
\| f_1 f_2 \, | \Aas (\rn) \| \le c \, \| f_1 \, | \Aas (\rn) \| \cdot \|f_2 \, | \Aas (\rn) \|
\end{\eq}
for all compactly supported $f_1, f_2 \in \Aas (\rn)$. The rest is now a matter of the Fatou property as explained in Remark \ref{R2.11}. As mentioned
after \eqref{3.44} spaces $\Aas (\rn)$ with $s<n/p$ cannot be multiplication algebras.
\end{proof}

\begin{remark}   \label{R3.8}
In connection with possible applications to Navier-Stokes equations and Keller-Segel systems as described in Section \ref{S1} those multiplication
algebras might be of interest which are also Banach spaces (and not only quasi-Banach spaces). This applies to $\Fa{}^n_{1,q} (\rn)$, $1\le q \le
\infty$, and $\Ba{}^{n/p}_{p,1} (\rn)$, $1 \le p <\infty$. The assertion that $\Ba{}^{n/p}_{p,1} (\rn)$ is a multiplication algebra has already been
observed in \cite[p.\,148]{Pee76}, denoted there as $\dot{B}^{n/p}_{p,1} (\rn)$, considered in the context of $\big(\dot{S} (\rn), \dot{S}'(\rn) 
\big)$.
\end{remark}

\subsection{Examples I: Riesz kernels}   \label{S3.5}
We discuss some examples which illuminate the different nature of the tempered homogeneous spaces $\Aas
(\rn)$ in the framework of the dual pairing $\big( S(\rn), S'(\rn) \big)$ and their inhomogeneous counterparts $\As (\rn)$. 

We begin with a simple observation. 
Let $f(x) =1$, $x\in \rn$. By \eqref{2.12} or \eqref{2.13}
one has $W_t f(x) =1$, $t>0$, $x\in \rn$. Then it follows from \eqref{2.29} with $\Ba{}^s_{\infty, \infty}(\rn) = \Cas (\rn)$ that $f\not\in \Cas(\rn)$
 for any $s<0$. On the other hand $f\in \Cc^s (\rn) = B^s_{\infty, \infty} (\rn)$ for all  $s\in \real$.

But we are mainly interested to have a closer look at the kernels of the Riesz transform
\begin{\eq}   \label{3.49}
h_\sigma (x) = |x|^{-\sigma}, \qquad 0<\sigma <n, \quad x \in \rn.
\end{\eq}
The interest in these kernels comes from
\begin{\eq}   \label{3.50}
\wh{h_\sigma} (\xi) = c_\sigma \, |\xi|^{\sigma -n}, \qquad 0<\sigma <n, \quad c_\sigma >0, \quad 0 \not= \xi \in \rn,
\end{\eq}
\cite[pp.\,117, 73]{Ste70}, \cite[Theorem 5.9, p.\,122]{LiL97}, and the related Riesz potentials
\begin{\eq}   \label{3.51}
\big( h_\sigma \wh{f} \big)^\vee (x) = c \, \int_{\rn} \frac{f(y)}{|x-y|^{n-\sigma}} \, \di y, \qquad 0<\sigma<n, \quad x \in \rn.
\end{\eq}
Let again $\Aas (\rn)$ be the tempered homogeneous spaces in the distinguished strip as introduced in Definition \ref{D2.8}. If $h_\sigma \in \Aas(\rn)$
then one has by \eqref{2.83} for $\lambda >0$,
\begin{\eq}  \label{3.52}
\lambda^{-\sigma} \big\| \, |x|^{-\sigma} \, | \Aas (\rn) \big\| = \big\| \, |\lambda x|^{-\sigma} \, | \Aas (\rn) \big\| 
= \lambda^{s- \frac{n}{p}} \big\| \, |x|^{-\sigma} \, | \Aas (\rn) \big\|.
\end{\eq}
One obtains $s= \frac{n}{p} - \sigma$ as a necessary condition for $h_\sigma \in \Aas (\rn)$. (Recall that for fixed $p,q$ and $A\in \{B,F \}$
there is no monotonicity of $\Aas (\rn)$ with respect to $s$ in contrast to the related inhomogeneous spaces). There is a temptation to use the 
equivalent domestic quasi-norms \eqref{2.85}, \eqref{2.87} to clarify whether $h_\sigma \in \Aas (\rn)$ where $s- \frac{n}{p} = -\sigma = - \frac{n}{r}$
is covered by \eqref{2.67}. But this requires that one first ensures $h_\sigma \in \Ca{}^{-\sigma} (\rn)$ as the {\em anchor space} of $\Aas (\rn)$ with $s = \frac{n}{p} - \sigma$ which are continuously embedded in $\Ca{}^{-\sigma} (\rn)$. Furthermore we wish to compare the outcome with 
corresponding assertions for the inhomogeneous spaces $\As (\rn)$.

\begin{theorem}  \label{T3.9}
Let $n\in \nat$ and $0<\sigma <n$.
\\[0.1cm]
{\upshape (i)} Let
\begin{\eq}   \label{3.53}
0<p \le \infty, \quad 0<q\le \infty, \quad n \big( \frac{1}{p} - 1 \big) <s< \frac{n}{p},
\end{\eq}
$(p<\infty$ for $F$-spaces$)$. Then $h_\sigma \in \Aas (\rn)$ if, and only if,
\begin{\eq}   \label{3.54}
\Aas (\rn) = \Ba{}^{\frac{n}{p} - \sigma}_{p, \infty} (\rn).
\end{\eq}
{\upshape (ii)} Let
\begin{\eq}   \label{3.55}
0<p \le \infty, \quad 0<q\le \infty, \quad s \in \real,
\end{\eq}
$(p<\infty$ for $F$-spaces$)$. Then $h_\sigma \in \As (\rn)$  if, and only if, $n/p <\sigma <n$ and
\begin{\eq}  \label{3.56}
\begin{cases}
either &\As (\rn) = B^{\frac{n}{p} - \sigma}_{p,\infty} (\rn), \\
or     &0<q \le \infty, \ s< \frac{n}{p} - \sigma.
\end{cases}
\end{\eq}
\end{theorem}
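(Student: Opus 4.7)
The plan is to evaluate the admissible quasi-norms from Definition \ref{D2.8} directly on $h_\sigma$, exploiting the joint scaling of $h_\sigma$ and the Gauss-Weierstrass semi-group. The substitution $y = \sqrt{t}\,z$ in \eqref{2.12} yields
\[
W_t h_\sigma(x) = t^{-\sigma/2}\, g(x/\sqrt{t}), \qquad g(u) = (4\pi)^{-n/2} \int_{\rn} e^{-|u-z|^2/4} |z|^{-\sigma}\, \di z,
\]
where $g$ is a bounded, continuous, radial, real-analytic function on $\rn$ with $g(0)>0$ and $g(u)\sim |u|^{-\sigma}$ as $|u|\to\infty$. Hence $\sup_{x,t}\, t^{\sigma/2}|W_t h_\sigma(x)| = \|g\|_\infty < \infty$, so $h_\sigma \in \Ca{}^{-\sigma}(\rn)$, which is the anchor space of every $\Aas(\rn)$ with $s=\frac{n}{p}-\sigma$ in the distinguished strip. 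Combined with the homogeneity argument \eqref{3.52}, this reduces part (i) to checking the $t$-integral in \eqref{2.68}, \eqref{2.69} at $s=\frac{n}{p}-\sigma$, and part (ii) to checking \eqref{2.66a}, \eqref{2.66b}.

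Since $\partial_t W_t = \Delta_x W_t$, iteration gives $\partial_t^m W_t h_\sigma(x) = t^{-\sigma/2-m}\,g_m(x/\sqrt{t})$ with $g_m := \Delta^m g$; the function $g_m$ inherits radiality and real-analyticity from $g$ and decays like $|u|^{-\sigma-2m}$ at infinity. A change of variable $u = x/\sqrt{t}$ then produces
\[
\|\partial_t^m W_t h_\sigma \,|\, L_p(\rn)\| = c_{p,m}\, t^{s/2-m}, \qquad c_{p,m} = \|g_m \,|\, L_p(\rn)\|,
\]
which is finite and strictly positive whenever $m > s/2$. Inserted into \eqref{2.69} the weight $t^{(m-s/2)q}$ exactly cancels the $t$-dependence, so the resulting $\int_0^\infty \di t/t$ diverges when $q<\infty$, while the $\sup$-version for $q=\infty$ is bounded. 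This gives $h_\sigma \in \Ba{}^{n/p-\sigma}_{p,\infty}(\rn)$ and $h_\sigma \notin \Bas(\rn)$ for $q<\infty$.

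For the $F$-spaces of part (i), the substitution $r = |x|/\sqrt{t}$ together with the radiality of $g_m$ converts the integrand in \eqref{2.68} (using $s+\sigma = n/p$) into
\[
\Big(\int_0^\infty t^{(m-s/2)q}|\partial_t^m W_t h_\sigma(x)|^q \frac{\di t}{t}\Big)^{1/q} = c\, |x|^{-n/p}\, G,
\]
for a strictly positive finite constant $G$ (with the obvious adjustment for $q=\infty$). The $L_p(\rn)$-norm of $|x|^{-n/p}$ diverges logarithmically at both $0$ and $\infty$, so $h_\sigma \notin \Fas(\rn)$ for every admissible $q$. Alternatively, the Franke-Jawerth embedding $\Fas(\rn)\hookrightarrow \Ba{}^{s_1}_{p_1,p}(\rn)$ of Theorem \ref{T3.1}(iii), applied with any $p_1>p$ (so that $v=p<\infty$), reduces this to the already settled $B$-case.

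For part (ii) the same scaling applies, but \eqref{2.66a}, \eqref{2.66b} integrate only over $t\in(0,1)$ and add a $\|W_1 h_\sigma \,|\, L_p\|$ term; the latter equals $\|g\,|\,L_p\|$, which is finite precisely when $\sigma p > n$, forcing the necessary condition $\sigma > n/p$. Once this is granted, the Besov integral reduces to $c_{p,m}^q \int_0^1 t^{(s_0-s)q/2}\,\di t/t$ with $s_0=n/p-\sigma<0$, convergent iff $s<s_0$ (any $q$) or $s=s_0$ with $q=\infty$; the parallel $F$-space computation converges iff $(s+\sigma)p<n$, i.e.\ $s<s_0$. The main obstacle I expect is the verification that the radial constant $G$ in the $F$-space analysis of part (i) is genuinely nonzero: this follows from real-analyticity of $g_m$ on $\rn$ (its Fourier transform $c(-|\xi|^2)^m e^{-|\xi|^2}|\xi|^{\sigma-n}$ is plainly nontrivial) but deserves to be written out carefully.
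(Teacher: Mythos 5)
Your proposal is correct, but it runs along a genuinely different track from the paper's proof. The paper first secures $h_\sigma\in\Ca{}^{-\sigma}(\rn)$ indirectly, via $h_\sigma\in L_{n/\sigma,\infty}(\rn)$ and real interpolation of the embeddings $L_{r_j}(\rn)\hra\Ca{}^{-\sigma_j}(\rn)$; only then is it entitled to use the Fourier-analytic \emph{domestic} quasi-norms \eqref{2.87}, into which it feeds the exact self-similarity $\|(\vp^j\wh{h_\sigma})^\vee|L_p(\rn)\|=c\,2^{j(\sigma-\frac np)}\|(\vp^0\wh{h_\sigma})^\vee|L_p(\rn)\|$ to read off $q=\infty$; the $F$-exclusion is then delegated to Franke--Jawerth, and part (ii) is handled through the starting term $(\vp\wh{h_\sigma})^\vee\sim|x|^{-\sigma}$. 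You instead evaluate the \emph{admissible} heat-kernel quasi-norms directly from the scaling identity $W_th_\sigma(x)=t^{-\sigma/2}g(x/\sqrt t)$, which buys you the anchor-space membership, the $B$-computation, the $F$-exclusion and part (ii) all from one formula, with no detour through Lorentz spaces or interpolation; the paper's route, on the other hand, is shorter on computation and generalizes immediately to the mixed kernels of Theorem \ref{T3.14}, where your single-scaling ansatz breaks down. Two points in your write-up need more care than you give them. First, $g_m=\Delta^mg$ does \emph{not} in general decay like $|u|^{-\sigma-2m}$: distributionally $\Delta^mh_\sigma=c_{m,\sigma}|x|^{-\sigma-2m}$ plus terms supported at the origin, and the constant $c_{m,\sigma}=\prod_{k=0}^{m-1}(\sigma+2k)(\sigma+2k+2-n)$ vanishes for exceptional $\sigma$ (e.g.\ $\sigma=n-2$, $m=1$ gives $\Delta g=c\,G_1$, Gaussian decay). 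Only the upper bound $|g_m(u)|\le C(1+|u|)^{-\sigma-2m}$ is used, and it is true, but it is not the naive estimate (splitting the convolution crudely only yields $|u|^{-\sigma}$, which is insufficient for the finiteness of your constant $G$ when $s>0$); one must integrate the Laplacians by parts onto $|z|^{-\sigma}$ in the region $|z|\ge|u|/2$. Second, you correctly source the positivity of $c_{p,m}$ and $G$ from $\wh{g_m}\not\equiv0$ rather than from the (possibly degenerate) asymptotics, and you flag this yourself; with these two points written out, the argument is complete.
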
 

\begin{proof}
{\em Step 1.} Let again $L_{r, \infty} (\rn)$ with $r= n/\sigma$ be the usual Lorentz (Marcinkiewicz) space quasi-normed by
\begin{\eq}   \label{3.57}
\|f \, | L_{r,\infty} (\rn) \| = \sup_{t>0} t^{1/r} f^* (t)
\end{\eq}
where $f^* (t)$ is the decreasing rearrangement of the Lebesgue-measurable function $f$ in \rn.
There are two positive constants $c, c'$ such that for any $R >0$,
\begin{\eq}   \label{3.58}
\big| \big\{ x\in \rn: \ h_\sigma (x) \ge R^{-\sigma} \big\} \big| = c\, R^n =t= \big| \big\{ \tau >0: \ h^*_\sigma (\tau) \ge c'\, t^{-\sigma/n} \big\}
\big|.
\end{\eq}
Then $h^*_\sigma (t) = c' \, t^{-\sigma/n}$ and $h_\sigma \in L_{r,\infty} (\rn)$. Let $1<r_1 <r< r_2 <\infty$. By \eqref{3.5} one has
\begin{\eq}   \label{3.59}
L_{r_j} (\rn) = \Fa{}^0_{r_j, 2} (\rn) \hra \Ca{}^{-\sigma_j} (\rn), \qquad \sigma_j = n/r_j,
\end{\eq}
$j= 1,2$. With $\frac{1}{r} = \frac{1-\theta}{r_1} + \frac{\theta}{r_2}$, $\sigma = (1-\theta)\sigma_1 + \theta \sigma_2 = \frac{n}{r}$, one has by
real interpolation
\begin{\eq}   \label{3.60}
L_{r, \infty} (\rn) = \big( L_{r_1} (\rn), L_{r_2} (\rn) \big)_{\theta, \infty} \hra \big( \Ca{}^{-\sigma_1} (\rn), \Ca{}^{-\sigma_2} (\rn) \big)_{\theta,\infty} \hra \Ca{}^{-\sigma} (\rn)
\end{\eq}
As far as Lorentz spaces and the above real interpolations are concerned one may consult \cite[Theorem 1.18.6/1, p.\,133, Theorem 2.4.1, 
p.\,182]{T78} (and their proofs). In particular,
\begin{\eq}   \label{3.61}
h_\sigma \in L_{\frac{n}{\sigma}, \infty} (\rn) \hra \Ca{}^{-\sigma} (\rn),
\end{\eq}
justifies to deal with $h_\sigma$ in the context of tempered homogeneous spaces.
\\[0.1cm]
{\em Step 2.} We prove part (i). Let $\vp^j (\xi)= \vp^0 (2^{-j} \xi)$, $j\in \ganz$, be as
in \eqref{2.32}. Then one has by \eqref{3.50}
\begin{\eq}  \label{3.62}
\vp^j (\xi) \, \wh{h_\sigma} (\xi) = c\, 2^{j(\sigma -n)} \, |2^{-j} \xi |^{\sigma -n} \, \vp^0 (2^{-j} \xi ), \qquad j\in \ganz,
\end{\eq}
\begin{\eq}   \label{3.63}
\big( \vp^j (\xi) \, \wh{h_\sigma} (\xi) \big)^\vee (x) = c' \, 2^{j\sigma} \, \big( \vp^0 \, \wh{h_\sigma} \big)^\vee (2^j x), \qquad j \in \ganz,
\end{\eq}
and 
\begin{\eq}   \label{3.64}
\big\| (\vp^j \wh{h_\sigma} )^\vee \, | L_p (\rn) \big\| = c' \, 2^{j(\sigma - \frac{n}{p})} \| (\vp^0 \wh{h_\sigma})^\vee \, | L_p (\rn) \|, \quad
j\in \ganz.
\end{\eq}
Inserted in \eqref{2.87} (what is now justified by Step 1) one obtains 
\begin{\eq}  \label{3.65}
h_\sigma \in \Bas (\rn) \quad \text{if, and only if,} \quad s= \frac{n}{p} -\sigma, \quad q=\infty.
\end{\eq}
By \eqref{3.5} with $v=p <\infty$ one has $h_\sigma \not\in \Fa{}^{\frac{n}{p} -\sigma}_{p,q} (\rn)$. Part (i) follows now from our comments after
\eqref{3.52}.
\\[0.1cm]
{\em Step 3.} We prove part (ii). In the starting terms in \eqref{2.7}, \eqref{2.9} one can replace $\vp_0$ according to \eqref{2.3} with $\vp_0 (x)
= \vp_0 (-x) \ge 0$, $x\in \rn$, by the convolution $\vp = \vp_0 * \vp_0$. Then $\wh{\vp} (x) = c \, \wh{\vp_0}^2 (x) \ge 0$ for some $c>0$. One has
\begin{\eq}   \label{3.66}
\big( \vp \wh{h_\sigma} \big)^\vee (x) = c' \int_{\rn} \vp^\vee (y) \frac{1}{|x-y|^\sigma} \, \di y \ge c'' \, |x|^{-\sigma}, \qquad |x| \ge 1,
\end{\eq}
$c'>0, c''>0$. This shows that $(\vp  \wh{h_\sigma} )^\vee \in L_p (\rn)$ requires $\sigma p >n$ (in particular $p>1$). By \eqref{3.64} with
$j\in \nat$ it follows that $h_\sigma \in \Bs (\rn)$ requires
\begin{\eq}   \label{3.67}
\text{either} \quad s< \frac{n}{p} - \sigma \quad \text{or} \quad s= \frac{n}{p} - \sigma,\ q=\infty, \quad \text{in addition to} \quad \frac{n}{p} < \sigma.
\end{\eq}
From \eqref{3.6} with $v=p<\infty$ (and what already know) follows
$h_\sigma \not\in F^{\frac{n}{p} - \sigma}_{p,q} (\rn)$, $0<p<\infty$, $0<q \le \infty$. On
the other hand using $n \big( \frac{1}{p} -1 \big) < \frac{n}{p} -\sigma <0$ one has by part (i) and the embedding \eqref{2.34}
\begin{\eq}   \label{3.687}
h_\sigma \in B^{\frac{n}{p} - \sigma}_{p,\infty} (\rn) \qquad 0<p \le \infty, \quad \frac{n}{p} < \sigma <n.
\end{\eq}
This justifies the upper line in \eqref{3.56}. The lower line in \eqref{3.56}  follows from \eqref{3.67} and elementary embeddings of inhomogeneous
spaces.
\end{proof}

\begin{remark}   \label{R3.10}
By the above theorem and \eqref{2.88}
\begin{\eq}   \label{3.69}
h_\sigma \in \Ba{}^{\frac{n}{p} - \sigma}_{p,\infty} (\rn) \hra B^{\frac{n}{p} - \sigma}_{p,\infty} (\rn), \qquad \frac{n}{p} <\sigma <n,
\end{\eq}
are the only cases where $h_\sigma$ belongs both to a tempered homogeneous space and its inhomogeneous counterpart.
\end{remark}

So far we excluded the spaces $\Fa{}^s_{\infty,q} (\rn)$ in the above theorem. We rely on the spaces of negative smoothness as introduced in Definition
\ref{D2.2}. The homogeneity \eqref{2.35} applied to \eqref{3.52} shows that $h_\sigma \in \Aa{}^s_{\infty,q} (\rn)$ requires $s= -\sigma$, where again
$0<\sigma <n$. It comes out that $\Ca{}^{-\sigma} (\rn) = \Ba{}^{-\sigma}_{\infty, \infty} (\rn) = \Fa{}^{-\sigma}_{\infty, \infty} (\rn)$ is the only
space with this property. We formulate a corresponding assertion and outline a proof.

\begin{corollary}   \label{C3.11}
Let $n \in \nat$ and $0<\sigma <n$. Let $0<q\le \infty$. Then
\begin{\eq}   \label{3.70}
h_\sigma \in \Aa{}^{-\sigma}_{\infty, q} (\rn) \qquad \text{if, and only if,} \quad q=\infty.
\end{\eq}
\end{corollary}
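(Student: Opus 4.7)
My plan has three steps: first a scaling reduction, second the easy endpoint $q=\infty$, third the divergences for $q<\infty$.

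First, I would apply the homogeneity identity \eqref{2.35} for $\Aa{}^s_{\infty,q}(\rn)$ (note that $n/p=0$ when $p=\infty$) to the identity $h_\sigma(\lambda\cdot)=\lambda^{-\sigma}h_\sigma$. Equating the two scalings forces $s=-\sigma$, so only the spaces $\Aa{}^{-\sigma}_{\infty,q}(\rn)$ are candidates. The ``if'' direction at $q=\infty$ is then immediate: by \eqref{3.61} one has $h_\sigma\in\Ca{}^{-\sigma}(\rn)$, which via \eqref{3.0a} and \eqref{2.28} coincides with both $\Ba{}^{-\sigma}_{\infty,\infty}(\rn)$ and $\Fa{}^{-\sigma}_{\infty,\infty}(\rn)$.

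For the ``only if'' direction with $q<\infty$, the decisive ingredient is the self-similarity
\[
W_\tau h_\sigma(y)=\tau^{-\sigma/2}\,g(y/\sqrt\tau),\qquad g:=W_1 h_\sigma\in C^\infty(\rn),
\]
obtained by a linear change of variables in \eqref{2.12}. Since $0<\sigma<n$, $g$ is strictly positive, bounded (so $0<g(0)\le\|g\|_\infty<\infty$), and satisfies $g(y)=|y|^{-\sigma}(1+O(|y|^{-1}))$ at infinity. For the Besov piece this is already enough: $\|W_\tau h_\sigma\mid L_\infty(\rn)\|=\tau^{-\sigma/2}\|g\|_\infty$, so the integrand of \eqref{2.24} with $s=-\sigma$ collapses to the nonzero constant $\|g\|_\infty^q$, and $\int_0^\infty\|g\|_\infty^q\,d\tau/\tau=\infty$; hence $h_\sigma\notin\Ba{}^{-\sigma}_{\infty,q}(\rn)$ for every $q<\infty$.

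The corresponding Triebel--Lizorkin statement is where I expect the main work to lie, since the averaging in \eqref{2.26} smooths out the pointwise divergence that drove the Besov argument. The plan is to run the parallel computation on \eqref{2.26}: the same self-similarity, together with the substitutions $\xi=x/\sqrt{t}$ and $s=\tau/t$, reduces the supremum in \eqref{2.26} to a supremum over the single scale-invariant parameter $\xi\in\rn$ of an integral depending only on $g$. One then chooses $\xi$ (or a sequence $\xi_k$) and restricts the inner double integral to a sub-region where $|g((\xi+\cdot)/\sqrt{s})|$ admits a lower bound independent of scale, using the positivity $g(0)>0$ near the origin together with the tail $g(y)\sim|y|^{-\sigma}$ at infinity. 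The delicate point will be balancing these two regimes so that the $t^{-n/2}$-prefactor in \eqref{2.26} does not swallow the divergence produced by the $d\tau/\tau$-integration; once this lower bound is in place the conclusion $h_\sigma\notin\Fa{}^{-\sigma}_{\infty,q}(\rn)$ follows for every $q<\infty$, completing the corollary.
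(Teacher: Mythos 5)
Your scaling reduction, the endpoint $q=\infty$ via \eqref{3.61}, and the Besov half of the ``only if'' direction are all correct: the self-similarity $W_\tau h_\sigma = \tau^{-\sigma/2}g(\cdot/\sqrt\tau)$ immediately collapses \eqref{2.24} to $\|g\|_\infty^q\int_0^\infty \di\tau/\tau = \infty$. This is in fact more direct than the paper's own route, which establishes $h_\sigma\notin\Ba{}^{-\sigma}_{\infty,q}(\rn)$ only indirectly, via \eqref{3.54} of Theorem \ref{T3.9}. But the Triebel--Lizorkin half is where the proposal stops, and the gap there is genuine rather than technical.

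Concretely, if you carry out the substitutions you propose in \eqref{2.26}, you obtain the scale-invariant quantity
\[
\sup_{\xi\in\rn}\;\int_0^1\!\!\int_{|\xi-z|\le 1} g(z/\sqrt{s})^q\,\di z\,\frac{\di s}{s},
\]
and the supremum is attained near $\xi=0$. Writing the inner integral as $s^{n/2}\int_{|w|\le s^{-1/2}}g(w)^q\,\di w$ and using $g(w)\sim|w|^{-\sigma}$ at infinity, the $z$-integral is of order $s^{\min(n,\sigma q)/2}$ (up to a logarithm when $\sigma q = n$), and the outer $\di s/s$ integration then converges. In other words, the balance you flag as ``delicate'' tips the wrong way: the $t^{-n/2}$-normalization and the ball of radius $\sqrt{t}$ do smooth out the divergence, exactly because they integrate $g(\cdot/\sqrt\tau)$ over a region much larger than its effective support $\{|y|\lesssim\sqrt\tau\}$. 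So the plan, as sketched, will not produce $h_\sigma\notin\Fa{}^{-\sigma}_{\infty,q}(\rn)$, and the $F$-half of the corollary remains unproved in your proposal. The paper avoids this computation altogether: it first shows $\delta\notin\Fa{}^{-n}_{\infty,q}(\rn)$ for $q<\infty$ (eq.\ \eqref{3.73}--\eqref{3.74}), then transports this to $h_{n-\sigma}$ by the lift $\dot I_\sigma$ together with the duality \eqref{3.76}, passing through the homogeneous framework $(\dot S,\dot S')$ --- a detour that Remark \ref{R3.12} itself characterizes as ``not really satisfactory.'' You should compare the lower bound claimed in \eqref{3.73} with the actual behaviour of $e^{-|y|^2/4\tau}$ on the fixed ball $|y|\le\sqrt t$ before committing to either route.
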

\begin{proof}
Let $\delta$ be the $\delta$-distribution. By \eqref{2.12} and \eqref{2.24} one has
\begin{\eq}   \label{3.71}
W_t \delta (x) = c \, t^{-n/2} \, e^{-\frac{|x|^2}{4t}}, \qquad t>0, \quad x\in \rn,
\end{\eq}
$c>0$, and
\begin{\eq}   \label{3.72}
\delta \in \Ba{}^{-n}_{\infty, q} (\rn) \qquad \text{if, and only if,} \quad q=\infty.
\end{\eq}
Hence $\delta \in \Ca{}^{-n} (\rn)$. Inserting \eqref{3.71} in \eqref{2.26} with $q<\infty$ one obtains for $t>0$ and some $c'>0$
\begin{\eq}   \label{3.73}
\big\| \delta \, | \Fa{}^{-n}_{\infty, q} (\rn) \big\|^q \ge c' \, t^{-n/2} \int^t_0 \int_{|y| \le \sqrt{t}} \tau^{nq/2} \, \tau^{-nq/2} \, \di y \,
\frac{\di \tau}{\tau} =\infty.
\end{\eq}
Hence
\begin{\eq}  \label{3.74}
\delta \not\in \Fa{}^{-n}_{\infty,q} (\rn), \qquad 0<q<\infty.
\end{\eq}
We extend now the lift $\dot{I}_\sigma$ according to \eqref{3.16} to the usual homogeneous spaces as described in \cite[Proposition 2.18, p.\,23]{T15}.
In particular,
\begin{\eq}  \label{3.75}
\dot{I}_\sigma \, \dot{F}^s_{1,q} (\rn) = \dot{F}^{s-\sigma}_{1,q} (\rn).
\end{\eq}
Recall the duality
\begin{\eq}  \label{3.76}
\dot{F}^s_{1,q} (\rn)' = \dot{F}^{-s}_{\infty, q'} (\rn), \quad s\in \real, \quad 1<q<\infty, \quad \frac{1}{q} + \frac{1}{q'} = 1,
\end{\eq}
covered by \cite[(5.2), p.\,70]{FrJ90}. This gives the possibility to shift the lifting \eqref{3.75} by duality to
\begin{\eq}   \label{3.77}
\dot{I}_\sigma \, \Fa{}^s_{\infty,q} (\rn) = \Fa{}^{s-\sigma}_{\infty, q} (\rn), \qquad s\in \real, \quad 1<q<\infty.
\end{\eq}
Then one has by \eqref{3.50} and $\wh{\delta} =c \not= 0$,
\begin{\eq}  \label{3.78}
h_{n-\sigma} (x) = c \, h^\vee_\sigma (x) = c' \, \big( h_\sigma \, \wh{\delta} \big)^\vee (x) = c' \, \dot{I}_{-\sigma} \delta (x).
\end{\eq}
By \eqref{3.74} one obtains
\begin{\eq}  \label{3.79}
h_{n-\sigma} \not\in \Fa{}^{\sigma -n}_{\infty,q} (\rn), \qquad 1<q<\infty
\end{\eq}
what can be extended by  the embedding \eqref{3.1} to $0<q<\infty$. Then \eqref{3.70} follows from \eqref{3.54} and \eqref{3.79}.
\end{proof}

\begin{remark}   \label{R3.12}
The above sketchy proof is not really satisfactory. We switched from the tempered homogeneous spaces $\Aas (\rn)$ considered in $\big( S\rn), S'(\rn)
\big)$ to their homogeneous counterparts in the framework of $\big( \dot{S} (\rn), \dot{S}' (\rn) \big)$. It would be desirable to prove the above
corollary in the context of tempered homogeneous spaces.
\end{remark}

\begin{remark}   \label{R3.13}
By Definition \ref{D2.2}(iii) and Theorem \ref{T2.6}
\begin{\eq}   \label{3.80}
\BMO^s (\rn) = \Fa{}^s_{\infty, 2} (\rn), \qquad s<0,
\end{\eq}
are tempered homogeneous spaces. Further details may be found in \cite[Remark 3.4, p.\,51 and (3.107), p.\,64]{T15}. One has by \eqref{3.70}
\begin{\eq}  \label{3.81}
|x|^{-\sigma} = h_\sigma (x) \not\in \BMO^{-\sigma} (\rn), \qquad 0<\sigma <n.
\end{\eq}
In particular,
\begin{\eq}   \label{3.82}
|x|^{-1} \not\in \BMO^{-1} (\rn), \qquad \text{but} \quad |x|^{-1} \in \Ca{}^{-1} (\rn),
\end{\eq}
$n \ge 2$. The space $\BMO^{-1} (\rn)$ plays a central role in the recent theory of the Navier-Stokes equations. But it is not so clear whether the
observation \eqref{3.82} is of any interest in this context.
\end{remark}

\subsection{Examples II: Mixed Riesz kernels}    \label{S3.6}
We replace the Riesz kernels $h_\sigma$ according to \eqref{3.49} by
\begin{\eq}   \label{3.83}
h_{\sigma,\vk} (x) = \frac{1}{|x|^\sigma} + \frac{1}{|x|^{\vk}}, \qquad 0< \sigma < \vk <n, \quad x \in \rn,
\end{\eq}
asking for a counterpart of Theorem \ref{T3.9}.

\begin{theorem}   \label{T3.14}
Let $n\in \nat$ and $0<\sigma <\vk <n$. 
\\[0.1cm]
{\upshape (i)} Then
\begin{\eq}   \label{3.84}
h_{\sigma, \vk} \not\in \Aas (\rn)
\end{\eq}
for all spaces $\Aas (\rn)$ as introduced in Definition \ref{D2.8}.
\\[0.1cm]
{\upshape (ii)} Let $\As (\rn)$ with
\begin{\eq} \label{3.85}
0<p \le \infty, \qquad 0 <q\le \infty, \qquad s\in \real,
\end{\eq}
$(p<\infty$ for $F$-spaces$)$ be the inhomogeneous spaces as introduced in \eqref{2.6}--\eqref{2.9}. 
Then $h_{\sigma, \vk} \in \As (\rn)$ if, and only if, $n/p <\sigma <\vk <n$ and
\begin{\eq}   \label{3.86}
\begin{cases}
either &\As (\rn) = B^{\frac{n}{p} - \vk}_{p, \infty} (\rn), \\
or     &0<q\le \infty, \ s< \frac{n}{p} - \vk.
\end{cases}
\end{\eq}
\end{theorem}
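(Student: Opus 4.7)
For part (i) the plan is to reduce to Theorem \ref{T3.9}(i) by combining the homogeneity \eqref{2.83} with linearity. Dilating gives
\begin{equation*}
h_{\sigma,\vk}(\lambda x) = \lambda^{-\sigma} h_\sigma(x) + \lambda^{-\vk} h_\vk(x), \qquad \lambda>0,
\end{equation*}
so if $h_{\sigma,\vk}\in\Aas(\rn)$ then every combination $v_\lambda := \lambda^{-\sigma} h_\sigma + \lambda^{-\vk} h_\vk$ belongs to the same space. Picking two distinct values $\lambda\ne\mu$, the $2\times 2$ linear system expressing $(v_\lambda,v_\mu)$ in the basis $(h_\sigma,h_\vk)$ has determinant proportional to $\mu^{\sigma-\vk}-\lambda^{\sigma-\vk}\ne 0$ (since $\sigma\ne\vk$), so both $h_\sigma$ and $h_\vk$ individually belong to $\Aas(\rn)$. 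The pure scaling identity \eqref{3.52} applied separately to $h_\sigma$ and $h_\vk$ then forces simultaneously $s-\tfrac{n}{p}=-\sigma$ and $s-\tfrac{n}{p}=-\vk$, a contradiction. This rules out every space in Definition \ref{D2.8}.

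For the necessity in part (ii) I would follow the Fourier-analytic strategy of Theorem \ref{T3.9}(ii). Using $\widehat{h_{\sigma,\vk}}(\xi)=c_\sigma|\xi|^{\sigma-n}+c_\vk|\xi|^{\vk-n}$ and the scaling identities \eqref{3.63}, \eqref{3.64} summand by summand,
\begin{equation*}
\bigl\|(\vp^j\widehat{h_{\sigma,\vk}})^\vee\,\big|\,L_p(\rn)\bigr\|\sim 2^{j(\sigma-n/p)}+2^{j(\vk-n/p)},
\end{equation*}
which is dominated by $2^{j(\vk-n/p)}$ for $j$ large and positive. Substituting into the Besov/Triebel--Lizorkin quasi-norms \eqref{2.7}, \eqref{2.9} reproduces the high-frequency summability conditions known for $h_\vk$ alone, forcing either $s<n/p-\vk$ or the borderline $s=n/p-\vk$ with $q=\infty$; the borderline $F$-case is excluded via the sharp embedding \eqref{3.6} exactly as in \eqref{3.67}. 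For the low-frequency block I would replace $\vp_0$ by the convolution square $\vp=\vp_0\ast\vp_0$ (so $\vp^\vee\ge 0$), as in \eqref{3.66}, to obtain
\begin{equation*}
(\vp\widehat{h_{\sigma,\vk}})^\vee(x) = c'\int_\rn \vp^\vee(y)\bigl(|x-y|^{-\sigma}+|x-y|^{-\vk}\bigr)\,\di y \gtrsim |x|^{-\sigma}, \qquad |x|\ge 1,
\end{equation*}
the $\sigma$-tail being dominant since $\sigma<\vk$. Membership in $L_p(\rn)$ thus forces $\sigma p>n$, i.e.\ $n/p<\sigma$, which combined with $\sigma<\vk<n$ yields $n/p<\sigma<\vk<n$.

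Sufficiency follows directly from Theorem \ref{T3.9}(ii): under the stated conditions both $h_\sigma$ and $h_\vk$ separately belong to $\As(\rn)$ (using the lower line of \eqref{3.56} for $h_\sigma$ in the borderline case $\As(\rn)=B^{n/p-\vk}_{p,\infty}(\rn)$, since then $s=n/p-\vk<n/p-\sigma$), and linearity of $\As(\rn)$ closes the argument. The main obstacle I anticipate is the low-frequency analysis: since $|x|^{-\sigma}$ has the slower tail, one must verify that convolution against $\vp_0^\vee$ does not cancel it, which is precisely why the convolution-square trick \eqref{3.66} (producing a nonnegative kernel) is required in order to certify the necessary $L_p$-lower bound.
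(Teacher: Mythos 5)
Your part (i) is correct but takes a genuinely different route from the paper. The paper's proof assumes $h_{\sigma,\vk}\in\Bas(\rn)$, uses the domestic quasi-norm \eqref{2.87} together with \eqref{3.64} applied to each summand, and computes \eqref{3.88} to get the incompatible double inequality $s-\tfrac{n}{p}\le-\vk<-\sigma\le s-\tfrac{n}{p}$. Your argument instead exploits the exact dilation invariance \eqref{2.83}: since $h_{\sigma,\vk}(\lambda\cdot)=\lambda^{-\sigma}h_\sigma+\lambda^{-\vk}h_\vk$ must lie in $\Aas(\rn)$ for every $\lambda>0$, two distinct dilations let you solve a nonsingular $2\times 2$ linear system and recover $h_\sigma$ and $h_\vk$ individually, whence the scaling identity \eqref{3.52} forces $s-\tfrac{n}{p}=-\sigma$ and $s-\tfrac{n}{p}=-\vk$ simultaneously. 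This is cleaner in that it bypasses the Littlewood--Paley computation entirely and reduces the claim to the single-kernel result via pure linear algebra and homogeneity; the paper's route is more computational but self-contained and produces the quantitative norm equivalence \eqref{3.88} as a by-product (which it then reuses in part (ii)). One small point to make explicit in your write-up: the passage from $h_\sigma\in\Aas(\rn)$ to $s-\tfrac{n}{p}=-\sigma$ uses that $\|h_\sigma\,|\Aas(\rn)\|\neq 0$, which holds because $h_\sigma\neq 0$ in $S'(\rn)$ and $\Aas(\rn)\hra S'(\rn)$.

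Your part (ii) follows essentially the same Fourier-analytic strategy as the paper: the low-frequency block via the convolution square $\vp=\vp_0\ast\vp_0$ and the lower bound $(\vp\wh{h_{\sigma,\vk}})^\vee(x)\gtrsim|x|^{-\sigma}$ for $|x|\ge1$ (the $\sigma$-tail dominating), forcing $n/p<\sigma$; the high-frequency block via \eqref{3.63}, \eqref{3.64}, where the $\vk$-term dominates as $j\to+\infty$, giving $s<n/p-\vk$ or the borderline $s=n/p-\vk$, $q=\infty$, with the borderline $F$-case excluded through the Franke embedding \eqref{3.6}. The one place you are more explicit than the paper is sufficiency: you obtain $h_{\sigma,\vk}\in\As(\rn)$ by verifying membership of $h_\sigma$ and $h_\vk$ separately from Theorem \ref{T3.9}(ii) and invoking linearity, whereas the paper derives the upper line of \eqref{3.86} directly from the first sum in \eqref{3.88} together with the initial-term analysis and leaves the lower line to elementary embeddings. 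Both variants are valid.
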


\begin{proof}
{\em Step 1.} We prove part (i). By \eqref{3.50} one has
\begin{\eq}   \label{3.87}
\wh{h_{\sigma, \vk}} (\xi) = \frac{c_\sigma}{|\xi |^{n-\sigma}} + \frac{c_{\vk}}{|\xi|^{n - \vk}}, \qquad 0 \not= \xi \in \rn.
\end{\eq}
Assuming $h_{\sigma, \vk} \in \Bas (\rn)$ then it follows from Theorem \ref{T2.10} and \eqref{3.64} that
\begin{\eq}   \label{3.88}
\| h_{\sigma, \vk} \, | \Bas (\rn) \| \sim \Big( \sum^\infty_{j=1} 2^{jq (s+\vk - \frac{n}{p})} \Big)^{1/q} + \Big(\sum_{j=-\infty}^0 2^{jq (s +\sigma
- \frac{n}{p})} \Big)^{1/q}
\end{\eq}
must be finite, that is
\begin{\eq}   \label{3.89}
s - \frac{n}{p} \le - \vk < - \sigma \le s - \frac{n}{p}.
\end{\eq}
This is not possible. As for $\Fas (\rn)$ one can rely on $\Fas (\rn) \hra \Ba{}^s_{p,\infty} (\rn)$.
\\[0.1cm]
{\em Step 2.} We prove part (ii).  By \eqref{3.66} we have for the starting terms in \eqref{2.7}, \eqref{2.9}
\begin{\eq}   \label{3.90}
\big( \vp \,\wh{h_{\sigma, \vk}} \big)^\vee (x) \ge c |x|^{-\sigma}, \qquad |x| \ge 1,
\end{\eq}
for some $c>0$. The converse follows from
\begin{\eq}   \label{3.91}
\int_{|x-y| \le |x|/2} \frac{\wh{\vp} (y) }{|x-y|^\sigma} \, \di y + \int_{|x-y| \ge |x|/2}  \frac{\wh{\vp} (y) }{|x-y|^\sigma} \, \di y \le
c\, |x|^{-N -\sigma +n} + c\, |x|^{-\sigma}
\end{\eq}
where $N$ can be chosen arbitrarily large, in particular $N=n$. Hence
\begin{\eq}   \label{3.92}
\big( \vp \, \wh{h_{\sigma, \vk}} \big)^\vee (x) \sim |x|^{-\sigma}, \qquad |x| \ge 1.
\end{\eq}
Then one has $\big(\vp \wh{h_{\sigma, \vk}} \big)^\vee \in L_p (\rn)$ if, and only if,
$\sigma >n/p$. The  upper line in \eqref{3.86} follows from \eqref{3.6} with $v=p$ and the first term on the right-hand side of \eqref{3.88} based on
\eqref{2.7}. The lower line in \eqref{3.86} is again a matter of elementary embedding.
\end{proof}

\subsection{Wavelets}   \label{S7}
In Section \ref{S3.8} we deal with cocompact embeddings and nonlinear approximation. We rely on wavelet expansions for $\Aas (\rn)$ in the distinguished
strip
\begin{\eq}   \label{3.93}
0<p,q \le \infty, \qquad n \big( \frac{1}{p} - 1 \big) <s< \frac{n}{p} \, ,
\end{\eq}
in the framework of the dual pairing $\big( S(\rn), S' (\rn) \big)$. For this purpose we transfer corresponding characterizations of the inhomogeneous
spaces $\As (\rn)$ to the above tempered homogeneous spaces $\Aas (\rn)$ excluding limiting cases. Under the restriction \eqref{3.93} there are no
topological  problems and one can carry over the final version for wavelet expansions of  $\As (\rn)$ according to \cite[Theorem 1.18, pp.\,10/11]{T10}.
There one finds also some further references. Corresponding wavelet expansions for homogeneous spaces $\dot{A}^s_{p,q} (\rn)$ are discussed in 
\cite[Section 3.5.1, pp.\,82--88]{T14} based again on some references. We adapt what follows to our later needs.

Let again
\begin{\eq}   \label{3.94}
\Ca{}^r (\rn) = \Ba{}^r_{\infty, \infty} (\rn), \qquad -n<r<0,
\end{\eq}
be the {\em anchor space} of the spaces $\Aas (\rn)$ with $s- \frac{n}{p} =r$, normed by \eqref{3.0b} = \eqref{2.29}. One has by Definition \ref{D2.8}
and Theorem \ref{T3.1}
\begin{\eq}   \label{3.95}
\Aas (\rn) \hra \Ca{}^r (\rn), \qquad s- \frac{n}{p} =r,
\end{\eq}
for all $s,p,q$ with \eqref{3.93}. We take over some material from \cite[Section 1.1.4]{T10}.  Let
\begin{\eq}   \label{3.96}
\psi_F \in C^{u} (\real), \quad \psi_M \in C^{u} (\real), \qquad u \in \nat,
\end{\eq}
be real compactly supported Daubechies wavelets with
\begin{\eq}    \label{3.97}
\int_{\real} \psi_M (x) \, x^v \, \di x =0 \qquad \text{for all $v\in \no$ with $v<u$.}
\end{\eq}
We extend these wavelets from $\real$ to $\rn$ by the usual multiresolution procedure. Let
\begin{\eq}   \label{3.98}
G = (G_1, \dots, G_n ) \in G^n = \{ F,M \}^{n*}
\end{\eq}
where  $G_l$ is either $F$ or $M$ and where $*$ indicates that at least one of the $2^n -1$ components of $G$ must be an $M$. Let
\begin{\eq}   \label{3.99}
\Psi^{j,r}_{G,m} (x) = 2^{-jr} \prod^n_{l=1} \psi_{G_l} \big( 2^j x_l - m_l), \qquad j\in \ganz, \quad G\in G^n, \quad m\in \zn,
\end{\eq}
with $r$ as in \eqref{3.94}. These are normalized building blocks in $\Ca{}^r (\rn)$ if $u > |r|$ in \eqref{3.96}. This applies also to the spaces
$\Aas (\rn)$ in \eqref{3.95} if $u> |s|$ (and $u> n (\frac{1}{q} -1) -s$ in case of $F$-spaces). As a consequence we adapt the sequence spaces 
$b^s_{p,q}$ and $f^s_{p,q}$ in \cite[p.\,9]{T10} to the above situation as follows. Let again $Q_{j,m} = 2^{-j} m + 2^{-j} (0,1)^n$ with $j\in \ganz$
and $m \in \zn$ be  as in Section \ref{S2.1}. Let $\chi_{j,m}$ be the characteristic function of $Q_{j,m}$ and $\chi^{(p)}_{j,m} (x) = 2^{j \frac{n}{p}}
\chi_{j,m} (x)$, $0<p\le \infty$, its $p$-normalization. Let $0<q\le \infty$. Then $\ba$ is the collection of all sequences
\begin{\eq}   \label{3.100}
\lambda = \big\{ \lambda^{j,G}_m \in \comp: \ j\in \ganz, \ G \in G^n, \ m \in \zn \big\}
\end{\eq}
such that
\begin{\eq}   \label{3.101}
\| \lambda \, | \ba \| = \bigg( \sum_{j\in \ganz} \sum_{G\in G^n} \Big( \sum_{m \in \zn} |\lambda^{j,G}_m |^p \Big)^{q/p} \bigg)^{1/q} <\infty
\end{\eq}
and $\fa$ is the collection of all sequences $\lambda$ in \eqref{3.100} such that
\begin{\eq}   \label{3.102}
\| \lambda \, | \fa \| = \Big\| \Big( \sum_{j,G,m} | \lambda^{j,G}_m \chi^{(p)}_{j,m} (\cdot)|^q \Big)^{1/q} | L_p (\rn) \Big\| <\infty
\end{\eq}
with the usual modification if $p=\infty$ and/or $q=\infty$. This is the adapted homogeneous counterpart of \cite[Definition 1.17, p.\,9]{T10}. In 
\cite[Theorem 1.18, pp.\,10/11]{T10} we characterized the elements of the inhomogeneous spaces $\As (\rn)$ in the framework of $\big( S(\rn), S'(\rn)
\big)$ by their expansions in terms of inhomogeneous wavelets. This point of view cannot be extended to the above tempered homogeneous spaces. As
already indicated in front of Theorem \ref{T2.10} and in \cite[Section 1.3, pp.\,5/6]{T15} we ask for {\em domestic} characterizations and {\em 
domestic quasi-norms} within an already fixed space $\Aas (\rn)$. As in the case of the inhomogeneous spaces we justify first that the dual pairing
\begin{\eq}   \label{3.103}
(f,\psi), \qquad f \in \Aas (\rn), \quad \psi \in C^{u} (\rn), \quad \text{$\supp \psi \ $ compact},
\end{\eq}
with $p,q,s$ as in \eqref{3.93} and $|s|<u \in \nat$ makes sense. If, in addition, $s<0$ then \eqref{3.103} follows from \eqref{2.88} and a well-known
corresponding assertion for related inhomogeneous spaces. The extension of this assertion to all admitted spaces with $p,q,s$ as in \eqref{3.93} is
now a matter of the embedding
\begin{\eq}   \label{3.103a}
\Aas (\rn) \hra \Ba{}^\sigma_{\tilde{p}, \infty} (\rn), \qquad s - \frac{n}{p} = \sigma - \frac{n}{\tilde{p}} =r
\end{\eq}
with $s>\sigma$ and $\sigma <0$. Recall that $\Psi^{j,r}_{G,m}$ is given by \eqref{3.99} based on \eqref{3.96}, \eqref{3.97}.

\begin{theorem}   \label{T3.15}
Let $n\in \nat$ and $-n<r<0$. 
\\[0.1cm]
{\upshape (i)} Let $0<p\le \infty$, $0<q\le \infty$ and $s - \frac{n}{p} =r$. Let $|s| <u \in \nat$. Let
\begin{\eq}   \label{3.104}
\lambda^{j,G}_m = \lambda^{j,G}_m (f) = 2^{j(n+2r)} \big( f, \Psi^{j,r}_{G,m} \big),
\end{\eq}
$f \in \Bas (\rn)$, with $j\in \ganz$, $G\in G^n$, $m\in \zn$. Then $f$ can be represented as
\begin{\eq}   \label{3.105}
f = \sum_{j,G,m} \lambda^{j,G}_m (f) \, \Psi^{j,r}_{G,m}
\end{\eq}
unconditional convergence being in $S'(\rn)$. Furthermore $\lambda = \lambda (f) \in \ba$ and $\| \lambda (f) \, | \ba \|$ is an equivalent {\em
domestic} quasi-norm in $\Bas (\rn)$. If $p<\infty$, $q< \infty$ then $\{ \Psi^{j,r}_{G,m} \}$ is a  normalized unconditional basis in $\Bas (\rn)$.
\\[0.1cm]
{\upshape (ii)} Let $0<p<\infty$, $0<q\le \infty$ and $s- \frac{n}{p} =r$. Let $\max \big( s, n (\frac{1}{q} -1) -s \big) <u \in \nat$. Then $f\in 
\Fas (\rn)$ can be represented by \eqref{3.105} with $\lambda^{j,G}_m (f)$ as in \eqref{3.104}, unconditional convergence being in $S'(\rn)$. 
Furthermore, $\lambda = \lambda (f) \in \fa$ and $\| \lambda (f) \, | \fa \|$ is an equivalent {\em domestic} quasi-norm in $\Fas (\rn)$. If $q<\infty$
then $\{ \Psi^{j,r}_{G,m} \}$ is a normalized unconditional basis.
\end{theorem}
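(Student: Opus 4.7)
The plan is to transfer the inhomogeneous wavelet characterization in \cite[Theorem 1.18]{T10} to the tempered homogeneous setting, exploiting the fact that the domestic Fourier-analytic quasi-norms in \eqref{2.85} and \eqref{2.87} already run over all $j\in\ganz$ and therefore mirror the range of dyadic scales used for the homogeneous Daubechies wavelets $\Psi^{j,r}_{G,m}$. The normalizing factor $2^{-jr}$ in \eqref{3.99} is precisely the one that matches the homogeneity \eqref{2.83} of $\Aas(\rn)$, so the wavelet characterization is invariant under the natural scaling, which in turn allows the same argument to handle high and low frequencies symmetrically.

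First I would check that the pairing $(f,\Psi^{j,r}_{G,m})$ in \eqref{3.104} is meaningful. For $s<0$ this is immediate from $\Aas(\rn)\hra\As(\rn)$ in \eqref{2.88} together with the standard action of $\As(\rn)$ on compactly supported $C^u$-functions. For general $s$ in the distinguished strip \eqref{3.93} I would use the embedding \eqref{3.103a} into a Besov space with negative smoothness to reduce to the previous case. This also shows that any $f\in\Aas(\rn)$ lies in the anchor space $\Ca{}^r(\rn)$ via \eqref{3.95}, which will provide the correct topological framework for convergence in $S'(\rn)$.

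The core step is the two-sided norm equivalence $\|\lambda(f)\,|\,\ba\|\sim\|f\,|\,\Bas\|$ (and the corresponding $\fa$/$\Fas$ version). I would prove $\|\lambda(f)\,|\,\ba\|\lesssim\|f\,|\,\Bas\|$ by writing $f=\sum_{k\in\ganz}(\vp^k\wh{f})^\vee$ in $S'(\rn)$ and estimating $|(\,(\vp^k\wh{f})^\vee,\Psi^{j,r}_{G,m}\,)|$ through the compact support plus vanishing moments \eqref{3.97} of $\psi_M$: the Fourier-side localization of $(\vp^k\wh{f})^\vee$ to frequencies $\sim 2^k$ combined with the $u$-th order zeros of $\Psi^{j,r}_{G,m}$ yields the standard almost-diagonal decay $c\,2^{-|j-k|u}$ times a maximal function of $(\vp^k\wh{f})^\vee$. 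Summing over $k$ via a Peetre-type maximal function and using $u>|s|$ (respectively $u>n(1/q-1)-s$ in the $F$-case) gives the required estimate when inserted into \eqref{2.87} (respectively \eqref{2.85}). For the converse direction I would, given $\lambda\in\ba$, verify that $\sum_{j,G,m}\lambda^{j,G}_m\Psi^{j,r}_{G,m}$ converges unconditionally in $S'(\rn)$ by testing against Schwartz functions and using $|s|<u$ plus the Tauberian-type decay of the wavelet sums at both low and high frequencies; the resulting distribution lies in $\Bas(\rn)$ with $\|\cdot\,|\,\Bas\|\lesssim\|\lambda\,|\,\ba\|$ by reinserting the expansion into the domestic quasi-norm \eqref{2.87} and using orthogonality of $\{\Psi^{j,r}_{G,m}\}$ with itself across distant scales. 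Combining the two estimates with an identification argument (the distribution $f-\sum\lambda^{j,G}_m(f)\Psi^{j,r}_{G,m}$ has vanishing pairings with all $\Psi^{j,r}_{G,m}$ and lies in $\Ca{}^r(\rn)$, hence is zero) yields \eqref{3.105}.

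The main obstacle will be the handling of the low-frequency scales $j\to-\infty$: unlike the inhomogeneous case of \cite[Theorem 1.18]{T10}, there is no scaling function absorbing the coarse scales, and one must ensure both unconditional convergence in $S'(\rn)$ and the correct topological interpretation of the synthesis. The restriction $r=s-\tfrac{n}{p}>-n$ inherent in the distinguished strip is exactly what makes the anchor space $\Ca{}^r(\rn)$ contain the partial sums uniformly, so the synthesis stays within $S'(\rn)$; testing against $\vp\in S(\rn)$, the contributions of low scales decay like $2^{j(n+r)}$ times $\|\vp\,|\,L_1\|$, which is summable for $j\to-\infty$ because of $r>-n$. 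The remaining claims---unconditional convergence (since no reordering affects the estimates above) and the normalized unconditional basis property when $p,q<\infty$---follow from the density of finite wavelet expansions, which is in turn a consequence of the Fatou property and the density argument explained in Remark \ref{R2.11}.
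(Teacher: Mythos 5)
The paper itself does not prove Theorem~\ref{T3.15}: Remark~\ref{R3.16} states explicitly that the result is transferred from the inhomogeneous and homogeneous literature (\cite{T06,T08,T10,LSUYY12}) via the domestic-quasi-norm framework of \cite{T15}, and ``we do not go into detail.'' Your sketch supplies essentially the standard almost-diagonal argument that underlies those references, so it is consistent with the approach the paper has in mind; the only points worth sharpening are (a) the identity $f=\sum_{k\in\ganz}(\vp^k\wh f)^\vee$ in $S'(\rn)$ should be flagged as a consequence of Theorem~\ref{T2.10} (it is exactly what the distinguished-strip restriction buys, and is false for general $f\in\dot S'(\rn)$), and (b) the low-frequency decay rate you quote, $2^{j(n+r)}$, should really include the moment gain $2^{ju}$ from \eqref{3.97}, giving $2^{j(n+r+u)}$; this is stronger but your conclusion ($r>-n$ forcing summability as $j\to-\infty$) is the same.
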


\begin{remark}   \label{R3.16}
A detailed proof of the inhomogeneous counterpart maybe found in \cite[Section 3.1.3, Theorem 3.5, pp.\,153--156]{T06} and \cite[Theorem 1.20,
pp.\,15--17]{T08}, repeated in \cite[Theorem 1.18, pp.\,10/11]{T10}. This can be transferred to the above tempered homogeneous spaces based on related
homogeneous counterparts as discussed in \cite{T15}. We do not go into detail. Otherwise the above theorem is more or less known, at least  on the level
of mathematical folklore, in the framework of $\big( \dot{S} (\rn), \dot{S}'  (\rn) \big)$ and then in the full range of $0 <p,q \le \infty$ and $s\in
\real$. We used already wavelet expansions of homogeneous spaces $\dot{A}^s_{p,q} (\rn)$ in \cite[p.\,84/85]{T14}, \cite[p.\,62]{T15} with a reference 
to \cite{LSUYY12}.
\end{remark}

\subsection{Cocompactness and greedy approximation}   \label{S3.8}
First we complement Theorem \ref{T3.1}. Let
\begin{\eq}   \label{3.106}
0<p_0 <p_1 <\infty, \qquad -n <s_0 - \frac{n}{p_0} = s_1 - \frac{n}{p_1} =r <0.
\end{\eq}
Then
\begin{\eq}   \label{3.107}
\Fa{}^{s_0}_{p_0, q_0} (\rn) \hra \Fa{}^{s_1}_{p_1, q_1} (\rn) \hra \Ca{}^r (\rn)
\end{\eq}
where again $\Ca{}^r (\rn)$ is the anchor space \eqref{3.94}, \eqref{3.95} and $0<q_0, q_1 \le \infty$. The proof of \eqref{3.107} can be based on the
inhomogeneous counterpart according to \cite[Theorem 2.7.1, p.\,129]{T83} and the same homogeneity argument as in Step 3 of the proof of Theorem 
\ref{T3.1}. Using the wavelet representation according to Theorem \ref{T3.15} one finds a bounded sequence $\{ f_k \}^\infty_{k=1} \subset 
\Fa{}^{s_0}_{p_0, q_0} (\rn)$ such that $\supp f_k \subset Q = (0,1)^n$ and
\begin{\eq}   \label{3.108}
1 \le \| f_{k_1} - f_{k_2} \, | \Ca{}^r (\rn) \| \le \| f_{k_1} - f_{k_2} \, | \Fa{}^{s_1}_{p_1, q_1} (\rn) \|, \qquad k_1 \not= k_2.
\end{\eq}
This shows that neither the embeddings in \eqref{3.107} nor their restrictions to bounded domains are compact. But they are {\em cocompact} with
respect to the isomorphic maps generated by the wavelet system
\begin{\eq}   \label{3.109}
\Psi = \big\{ \Psi^{j,r}_{G,m}: \ j\in \ganz, \ G\in G^n, \ m\in \zn \}
\end{\eq}
according to \eqref{3.99} in the following sense: Let 
\begin{\eq}   \label{3.110}
\big( I^{j,r}_m f \big)(x) = 2^{jr} \, f \big( 2^{-j} (x+m) \big), \qquad j\in \ganz \quad m \in \zn,
\end{\eq}
in the distributional understanding. With $\Aas (\rn)$ as in \eqref{3.95} one has by \eqref{1.3}
\begin{\eq}   \label{3.111}
\| I^{j,r}_m f \, | \Aas (\rn) \| = 2^{jr} \, 2^{-j (s- \frac{n}{p})} \| f \, | \Aas (\rn) \| = \| f \, | \Aas (\rn) \|.
\end{\eq}
Let $\psi_G (x)= \prod^n_{l=1} \psi_{G_l} (x_l)$ with $G\in G^n$ be a basic function in \eqref{3.99}. By \eqref{3.104} one has
\begin{\eq}   \label{3.106a}
\begin{aligned}
\big( I^{j,r}_m f, \psi_G \big) &= 2^{jr} \int_{\rn} f \big( 2^{-j} (x+m) \big) \, \psi_G (x) \, \di x \\
&= 2^{j(r+n)} \int_{\rn} f(y) \, \psi_G \big( 2^j y -m \big) \, \di y \\
&= \lambda^{j,G}_m (f)
\end{aligned}
\end{\eq}
(distributional interpretation). The (first or second) embedding in \eqref{3.107} is called {\em cocompact} (with respect to the isomorphisms 
$I^{j,r}_m$) if every bounded sequence $\{f_k \}^\infty_{k=1}$ in the source space such that
\begin{\eq}   \label{3.107a}
\big( I^{j_k,r}_{m_k} f_k, \vp \big) \to 0 \qquad \text{for any choice of $j_k$, $m_k$}
\end{\eq}
and any $\vp \in S(\rn)$ converges to zero in the target space (weak* convergence). First we justify that \eqref{3.107a} remains valid for compactly
supported functions $\psi \in C^{u} (\rn)$ with $|r| <u \in \nat$: By the same arguments as in \cite[pp.\,63/64]{T15} one has the duality
\begin{\eq}   \label{3.108a}
\Ba{}^{|r|}_{1,1} (\rn)' = \Ca{}^r (\rn)
\end{\eq}
as in the inhomogeneous counterpart, \cite[Theorem 2.11.2, p.\,178]{T83}. By \cite[Proposition 3.52, p.\,107]{T15} the spaces $B^{|r|}_{1,1} (\rn)$
and $\Ba{}^{|r|}_{1,1} (\rn)$ coincide locally. Recall that $S(\rn)$ is dense in $B^{|r|}_{1,1} (\rn)$. Then \eqref{3.107a} with $\psi$ in place of
$\vp$ follows from
\begin{\eq}   \label{3.109a}
\big| \big( I^{j_k,r}_{m_k} f_k, \psi \big) \big| \le \big| \big( I^{j_k,r}_{m_k} f_k, \vp \big) \big| + c \, \| \vp - \psi \, | B^{|r|}_{1,1} (\rn) \|
\end{\eq}
and suitable approximations. Applying this observation to \eqref{3.106a} one has by \eqref{3.107a} and Theorem \ref{T3.15}
\begin{\eq}   \label{3.110a}
\| f_k \, | \Ca{}^r (\rn) \| \sim \sup_{j,G,m} | \lambda^{j,G}_m (f_k) | \to 0 \qquad \text{if} \quad k \to \infty.
\end{\eq}
Hence the embedding
\begin{\eq}   \label{3.111a}
\Fa{}^{s_0}_{p_0, q_0} (\rn) \hra \Ca{}^r (\rn)
\end{\eq}
is cocompact. We extend this assertion to the first embedding in \eqref{3.107}. Let  $-\infty <t_1 <t_2 <\infty$, $0<q\le \infty$, $0<\theta <1$ and
$t = (1-\theta) t_1 + t \theta_2$. Then for $\{ a_j \}^\infty_{j=-\infty} \subset \comp$ one has
\begin{\eq} \label{3.112}
\Big( \sum^\infty_{j=-\infty} 2^{jtq} |a_j |^q \Big)^{1/q} \le C \, \Big( \sup_{j\in \ganz} 2^{j t_1} |a_j | \Big)^{1-\theta} \Big( \sup_{j\in \ganz}
2^{j t_2} |a_j | \Big)^\theta
\end{\eq}
(usual modification if $q=\infty$), \cite[Lemma 3.7, p.\,394]{BrM01}. The proof uses the same splitting technique as in \cite[pp.\,129/130]{T83}, based
on \cite{Jaw77}, resulting in the first embedding in \eqref{3.107}. Let
\begin{\eq}   \label{3.113}
0<p_1 <p_2 <\infty, \qquad -n < s_1 - \frac{n}{p_1} = s_2 - \frac{n}{p_2} =r <0,
\end{\eq}
$0<\theta <1$,
\begin{\eq}   \label{3.114}
\frac{1}{p} = \frac{1-\theta}{p_1} + \frac{\theta}{p_2}, \qquad s = (1-\theta)s_1 + \theta s_2
\end{\eq}
and $0<q, q_1, q_2 \le \infty$. Then   
\begin{\eq}   \label{3.115}
\| f \, |\Fas (\rn) \| \le c \, \| f \, | \Fa{}^{s_1}_{p_1, q_1} (\rn) \|^{1-\theta} \| f \, | \Fa{}^{s_2}_{p_2, q_2} (\rn) \|^\theta,
\end{\eq}
$f\in \Fa{}^{s_1}_{p_1, q_1} (\rn)$, follows from \eqref{3.112} (with $t_1 = \frac{n}{p_1}$, $t_2 = \frac{n}{p_2}$, $t = \frac{n}{p}$) inserted in the
wavelet representation according to Theorem \ref{T3.15}(ii), based on \eqref{3.102}, H\"{o}lder's inequality and $\Fas (\rn) \hra \Fa{}^s_{p,\infty}
(\rn)$. This assertion can be complemented by
\begin{\eq}  \label{3.116}
\| f \, | \Fas (\rn) \| \le c \, \| f \, | \Fa{}^{s_0}_{p_0, q_0} (\rn) \|^{1-\theta} \| f \, | \Ca{}^r (\rn) \|^\theta,
\end{\eq}
$f \in \Fa{}^{s_0}_{p_0, q_0} (\rn)$, with
\begin{\eq}  \label{3.117}
0<p_0 <\infty, \qquad -n <s_0 - \frac{n}{p_0} = r <0,
\end{\eq}
$0< \theta <1$,
\begin{\eq}   \label{3.118}
\frac{1}{p} = \frac{1-\theta}{p_0}, \qquad s = (1-\theta)s_0 + \theta r
\end{\eq}
and $0<q, q_0 \le \infty$. Both \eqref{3.115} and \eqref{3.116} are Gagliardo--Nirenberg refinements of \eqref{3.107}. They go back to \cite[Lemma 3.1,
p.\,393]{BrM01} (with a reference to Oru) and have also been used in \cite{HaS11} by the same wavelet arguments as above, \cite[Lemma 5,
p.\,938]{HaS11}. The cocompactness of the first embedding in \eqref{3.107} follows now from the cocompactness of the second embedding and \eqref{3.116}.

Cocompactness may be considered as a substitute if embeddings  between spaces are continuous but not compact. A discussion from the point of view
of function spaces may be found in the recent paper \cite{Tin16} and the literature within (one of the proofs presented there is quite similar as the
above arguments based on wavelet representations). Cocompactness is closely related to so-called profile decompositions of critical embeddings between
function  spaces which are continuous but not compact playing a role in the recent theory of non-linear PDE's, including Navier-Stokes equations.
Related details may be found in \cite{Jaf99, BCK11}. We do not discuss these topics here although they might be of some use in connection with our
approach to Navier-Stokes and Keller-Segel equations in \cite{T13, T14, T16}. 

We are interested in so-called best $k$-term approximations and greedy bases which may be regarded as quantitative aspects of cocompactness. What
follows can also be considered as a comment to \cite{Kyr01} and, in particular, \cite{HaS11} restricting ourselves exclusively to the embeddings
\eqref{3.106}, \eqref{3.107}, that is
\begin{\eq}   \label{3.119}
\id: \quad F_0 (\rn) = \Fa{}^{s_0}_{p_0, q_0} (\rn) \hra F_1 (\rn)
\end{\eq}
where $F_1 (\rn)$ is either $\Fa{}^{s_1}_{p_1, q_1} (\rn)$ or $\Ca{}^r (\rn)$. Let $\Psi$ be the wavelet system \eqref{3.109} (fixed once and for all
in the sequel). Let $k\in \nat$. Then
\begin{\eq}   \label{3.120}
\sigma_k (f,F_1 (\rn) ) = \inf \Big\{ \big\| f - \sum_{(j,G,m) \in K} \mu^{j,G}_m \Psi^{j,r}_{G,m} \, | F_1 (\rn) \big\|: \ \text{card }K=k, \quad
\mu^{j,G}_m \in \comp \Big\}
\end{\eq}
is the {\em best $k$-term approximation} of $f \in \Fa{}^{s_1}_{p_1, q_1} (\rn)$ and
\begin{\eq}   \label{3.121}
\sigma_k (\id) = \sup \big\{ \sigma_k \big( f, F_1 (\rn) \big): \ f \in F_0 (\rn), \ \| f \, | F_0 (\rn) \| \le 1 \big\}.
\end{\eq}
Let the coefficients  of the wavelet expansion of $f \in F_1 (\rn)$ in \eqref{3.105} be ordered by magnitude,
\begin{\eq}   \label{3.122}
\big| \lambda^{j_1, G^{(1)}}_{m^{(1)}} (f)\big| \ge \big| \lambda^{j_2, G^{(2)}}_{m^{(2)}} (f) \big| \ge \cdots .
\end{\eq}
Then
\begin{\eq}   \label{3.123}
g_k (f) = \sum^k_{l=1} \lambda^{j_l, G^{(l)}}_{m^{(l)}} (f) \, \Psi^{j_l,r}_{G^{(l)}, m^{(l)}}, \qquad k \in \nat,
\end{\eq}
is called the {\em greedy algorithm} in $F_1 (\rn)$. The system $\Psi$ is said to be {\em greedy} in $F_1 (\rn)$ if there is a constant $C$ ($\ge 1$)
such that for all $f\in F_1 (\rn)$ and all $k\in \nat$
\begin{\eq}   \label{3.124}
\| f - g_k (f) \, | F_1 (\rn) \| \le C \, \sigma_k \big(f, F_1 (\rn) \big).
\end{\eq}
The converse inequality is obvious by definition.

\begin{proposition}    \label{P3.17}
Let $F_1 (\rn) = \Fa{}^{s_1}_{p_1, q_1} (\rn)$ be as in \eqref{3.106}, \eqref{3.107} with $q_1 <\infty$. Then $\Psi$ is a greedy basis in $F_1 (\rn)$.
\end{proposition}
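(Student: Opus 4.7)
The plan is to apply the Konyagin--Temlyakov characterization of greedy bases: a normalized unconditional basis in a quasi-Banach space is greedy if and only if it is democratic, that is, if the quasi-norm of $\sum_{\gamma \in K} e_\gamma$ depends (up to multiplicative constants) only on the cardinality of the finite index set $K$, and not on its location.

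First, Theorem \ref{T3.15}(ii) applies because $p_1<\infty$ (part of \eqref{3.106}) and $q_1<\infty$ (by assumption). It provides that $\Psi$ is a normalized unconditional basis in $F_1(\rn) = \Fa{}^{s_1}_{p_1,q_1}(\rn)$ and that $f \mapsto \lambda(f)$ is an isomorphism of $F_1(\rn)$ onto the sequence space $\fa$ with parameters $(s_1,p_1,q_1)$. Since both ``greedy'' and ``unconditional'' are preserved under bounded isomorphisms of bases, it suffices to show that the canonical unit-vector basis $\{ e_{j,G,m}: j\in\ganz,\ G\in G^n,\ m\in \zn\}$ is a greedy basis in $\fa$.

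The remaining step is democracy: one has to establish that there exist constants $c_1,c_2>0$ such that
\begin{equation*}
c_1 \, k^{1/p_1} \le \Big\| \sum_{\gamma \in K} e_\gamma \, | \, f^{s_1}_{p_1,q_1} \Big\| \le c_2 \, k^{1/p_1}
\end{equation*}
for every finite $K$ with $\mathrm{card}\, K = k$. The lower bound is the easy direction: one restricts the $L_{p_1}$-integral defining \eqref{3.102} to a single cube $Q_{j_0,m_0}$ with $(j_0,G_0,m_0)\in K$ and uses that the normalization $\chi^{(p_1)}_{j,m}$ makes $\|\chi^{(p_1)}_{j,m}\,|\,L_{p_1}(\rn)\|=1$, then iterates over the $k$ indices. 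The upper bound is the core estimate and follows the wavelet-based argument of Hansen--Sickel \cite{HaS11} (in particular their Lemma 5), which handles the $L_{p_1}(\ell_{q_1})$-structure of \eqref{3.102} by a rearrangement of the indicators $\chi^{(p_1)}_{j,m}$ and a maximal-function bound; the argument is local in scale and transfers verbatim from the inhomogeneous sequence space $f^{s_1}_{p_1,q_1}$ (with $j\in\no$) to the present tempered homogeneous sequence space $\fa$ (with $j\in\ganz$).

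The main obstacle is precisely this upper democracy estimate when the indices in $K$ are spread across many scales $j\in \ganz$: the $\ell_{q_1}$-sum inside $L_{p_1}$ must be shown not to accumulate beyond what a single indicator of a set of measure $\sim k$ would contribute. The positivity of $s_1 - n/p_1 = r < 0$ has no sign restriction issue here because the normalizations in \eqref{3.99} and \eqref{3.102} absorb the smoothness index, so the scaling is the same as in the Triebel--Lizorkin case treated in \cite{HaS11}. Combining unconditionality from Theorem \ref{T3.15}(ii) with this democracy, the Konyagin--Temlyakov criterion yields \eqref{3.124} and completes the proof.
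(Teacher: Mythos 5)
Your overall strategy --- transfer to the sequence space $\fa$ via Theorem \ref{T3.15}(ii) and then apply the Konyagin--Temlyakov criterion (normalized unconditional + democratic $\Rightarrow$ greedy) --- is exactly the argument the paper invokes by reference to \cite[Section 6.3]{T08}, and the target estimate $\big\|\sum_{\gamma\in K}e_\gamma\,|\,\fa\big\|\sim k^{1/p_1}$ is the correct one; it reappears in this very paper as the equivalence $\sim$ in \eqref{3.126}. So the structure of your proof matches the paper's.

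There is, however, a genuine gap in your treatment of the democracy estimate, specifically the lower bound. Restricting the $L_{p_1}$-integral in \eqref{3.102} to a single cube $Q_{j_0,m_0}$ with $(j_0,G_0,m_0)\in K$ only gives $\big\|\sum_{\gamma\in K}e_\gamma\,|\,\fa\big\|\ge 1$; since cubes at different scales $j\in\ganz$ nest and overlap, you cannot simply ``iterate over the $k$ indices'' and add these contributions to reach $\gtrsim k^{1/p_1}$. A correct argument must use the dyadic tree structure. One route: if $q_1\le p_1$, use $(a+b)^{p_1/q_1}\ge a^{p_1/q_1}+b^{p_1/q_1}$ termwise in the integrand, which decouples the cubes and yields $\ge k$ for the $p_1$-th power. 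In the general case one needs the (elementary but non-obvious) lemma that for any finite family $\mathcal K$ of dyadic cubes, $\int_{\bigcup\mathcal K}\ell(Q^*(x))^{-n}\,\di x\ge c\,|\mathcal K|$, where $Q^*(x)$ denotes the smallest cube of $\mathcal K$ containing $x$ --- precisely the estimate obtained after bounding the $\ell_{q_1}$-sum from below by the dominant (finest) scale. Your ``easy direction'' sketch omits exactly this point. Secondarily, the citation of \cite[Lemma 5]{HaS11} for the upper bound is misplaced: as the present paper uses it after \eqref{3.116}, that lemma is a Gagliardo--Nirenberg interpolation inequality, not a democracy estimate. The dominant-scale argument you describe for the upper bound is sound, but it should be credited to the wavelet democracy literature (Frazier--Jawerth, Garrig\'os--Hern\'andez, or \cite[pp.\,212--213]{T08}), where both democracy bounds are worked out for $f^s_{p,q}$-type sequence spaces and, as you note, are indeed scale-local and transfer unchanged from $j\in\no$ to $j\in\ganz$.
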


\begin{remark}   \label{R3.18}
We dealt in \cite[Section 6.3, pp.\,210--215]{T08} with greedy bases in several types of spaces \As. The above proposition can be proved by the same
arguments as there. In \cite{T08} one finds also further information and references. In particular it comes out that the wavelet basis $\Psi$ is not greedy in $\Bas (\rn)$ if $p \not= q$.
\end{remark}

It is the main aim of this Section \ref{S3.8} to justify the following assertion.

\begin{theorem}  \label{T3.19}
Let $F_0 (\rn)$ and $F_1 (\rn)$ be as in \eqref{3.106}, \eqref{3.107} and \eqref{3.119}. Then there are two constants $0<c_1 <c_2 <\infty$ such that
\begin{\eq}   \label{3.125}
c_1 \, k^{\frac{1}{p_1} - \frac{1}{p_0}} \le \sigma_k (\id ) \le c_2 \, k^{\frac{1}{p_1} - \frac{1}{p_0}}, \qquad k\in \nat,
\end{\eq}
$($with $p_1 = \infty$ if $F_1 (\rn) = \Ca{}^r (\rn))$.
\end{theorem}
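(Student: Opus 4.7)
The strategy is to transfer the whole problem to the sequence spaces $\fa$ via the wavelet isomorphism of Theorem \ref{T3.15}, and then perform a greedy / Stechkin-type analysis there. Because $r = s_0 - \frac{n}{p_0} = s_1 - \frac{n}{p_1}$, the very same wavelets $\Psi^{j,r}_{G,m}$ serve as an unconditional basis both for $F_0(\rn)$ and for $F_1(\rn)$: in the latter case either as $\fa$ with $p=p_1$, $q=q_1$, or, if $F_1 = \Ca{}^r(\rn)$, as the $b_{\infty,\infty}$-type sequence space. Consequently $\sigma_k(\id)$ equals, up to equivalence, the corresponding $k$-term approximation number of the sequence-space embedding $f_{p_0,q_0}\hra f_{p_1,q_1}$ (or $f_{p_0,q_0}\hra b_{\infty,\infty}$), where now the canonical ``unit vectors'' play the role of the dictionary.

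For the upper bound I would use Proposition \ref{P3.17} (and the trivial fact that the wavelet system is greedy in $\Ca{}^r(\rn)$, since its norm is the supremum of $|\lambda^{j,G}_m|$ up to normalization). Thus $\sigma_k(f,F_1)$ is equivalent to the error of the greedy algorithm $\|f - g_k(f)\, |F_1(\rn)\|$. Given $\lambda=\lambda(f)$ with $\|\lambda\,|f_{p_0,q_0}\|\le 1$, order the coefficients decreasingly as $|\lambda_{(1)}|\ge |\lambda_{(2)}|\ge \cdots$. A standard rearrangement estimate in $f_{p_0,q_0}$ (following from the pointwise bound underlying \eqref{3.102}) gives $|\lambda_{(k)}| \le c\, k^{-1/p_0}$; then, writing the tail in dyadic blocks $\{j: 2^{\ell}\le j<2^{\ell+1}\}$ and applying the Gagliardo-Nirenberg-type inequality \eqref{3.115}/\eqref{3.116} (rewritten at the sequence level) yields
\[
\|\lambda-g_k(\lambda)\,|f_{p_1,q_1}\| \lesssim k^{\frac{1}{p_1}-\frac{1}{p_0}},
\]
which is the required upper estimate (the case $p_1=\infty$ being immediate since the tail is bounded by $|\lambda_{(k+1)}|$).

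For the lower bound I would exhibit an explicit test sequence. Fix one scale $j_0$ and one $G_0\in G^n$, choose $N\sim k$ lattice points $m\in\zn$ inside a single dyadic cube (so that the $N$ supports $Q_{j_0,m}$ are disjoint and packed in an $N^{1/n}$-side sublattice), and set $\lambda^{j_0,G_0}_m = N^{-1/p_0}$ on this set, zero elsewhere. A direct computation with \eqref{3.102} shows $\|\lambda\,|f_{p_0,q_0}\|\sim 1$. Any $k$-term approximation with $k\le N/2$ must leave at least $N/2$ of these coefficients untouched, and evaluating the $f_{p_1,q_1}$-norm (or $b_{\infty,\infty}$-norm) of the residual gives a lower bound of order $N^{1/p_1}\cdot N^{-1/p_0}\sim k^{1/p_1-1/p_0}$, as required; absorbing the factor $1/2$ into $c_1$ finishes the proof.

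The main technical obstacle is the accurate sequence-space computation, in particular handling the mixed-norm structure of $\fa$ (which is \emph{not} rearrangement-invariant when $p\neq q$) so as to deduce the rearrangement estimate $|\lambda_{(k)}|\le c\,k^{-1/p_0}$ and to make the dyadic-block summation on the tail go through; once this is in place, the argument follows the template of \cite{HaS11}, and the homogeneity \eqref{2.83} combined with the Fatou property (Remark \ref{R2.11}) removes any remaining topological issue caused by the homogeneous setting.
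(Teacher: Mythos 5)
Your proposal follows essentially the same route as the paper: transfer everything to sequence spaces via the wavelet isomorphism of Theorem \ref{T3.15}, derive the rearrangement estimate $|\lambda_{(k)}|\lesssim k^{-1/p_0}$ from the $f_{p_0,q_0}$-bound (this is exactly \eqref{3.126} and gives Step~1 of the paper, the anchor-space case $F_1(\rn)=\Ca{}^r(\rn)$), upgrade to $F_1(\rn)=\Fa{}^{s_1}_{p_1,q_1}(\rn)$ by the Gagliardo--Nirenberg inequality \eqref{3.116}, and close the lower bound with a one-scale test sequence of $\sim k$ equal coefficients of size $k^{-1/p_0}$. The one cosmetic difference is that you propose an additional dyadic-block splitting of the tail before invoking \eqref{3.115}/\eqref{3.116}; the paper simply applies \eqref{3.116} directly to $f-g_k(f)$, using that the greedy remainder is uniformly bounded in $\Fa{}^{s_0}_{p_0,q_0}(\rn)$ by unconditionality, so that step is unnecessary but harmless.
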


\begin{proof} {\em Step 1.} Let $F_1 (\rn) = \Ca{}^r (\rn)$ and $F_0 (\rn) = \Fas (\rn) = \Fa{}^{s_0}_{p_0, q_0} (\rn)$ as above. Let $f\in \Fas (\rn)$
with $\| f \, | \Fas (\rn) \| \le 1$ be expanded by \eqref{3.105}. We assume that the $k$ largest coefficients according to \eqref{3.122} are larger
than or equal to $\ve_k >0$. Let $\chi^{(p)}_l$ be the $p$-normalized characteristic functions of the related cubes $Q^l = Q_{j_l,m_l}$ in
\eqref{3.102}. Then
\begin{\eq}   \label{3.126}
\ve^p_k \, k = \sum^k_{l=1} \ve^p_k \int_{\rn} \chi^{(p)}_l (x)^p \, \di x \sim \int_{\rn} \Big( \sum^k_{l=1} \ve^q_k \, \chi^{(p)q}_l (x) \Big)^{p/q}
\, \di x \le c,
\end{\eq}
where the equivalence $\sim$ is covered by corresponding arguments in \cite[p.\,213]{T08} and the last estimate comes from \eqref{3.102}. One has 
$\ve_k \le c \, k^{-1/p}$. Then
\begin{\eq}   \label{3.127}
\sigma_k \big( \id: \ \Fas (\rn) \hra \Ca{}^r (\rn) \big) \le c \, k^{-1/p}, \qquad k \in \nat,
\end{\eq}
follows from \eqref{3.120}, \eqref{3.121} and the norm of $\Ca{}^r (\rn)$ as used in \eqref{3.110a}. The converse can be obtained by choosing suitable
normed functions in $\Fas (\rn)$ consisting of $k+1$ terms with $\ve_k = k^{-1/p}$ in the expansion \eqref{3.105}.
\\[0.1cm]
{\em Step 2.} Let now $\Fa{}^{s_0}_{p_0, q_0} (\rn)$ and $\Fa{}^{s_1}_{p_1, q_1} (\rn) = \Fas (\rn)$ be as in \eqref{3.107} and \eqref{3.116}. Then one
has by Step 1 and \eqref{3.118}
\begin{\eq}   \label{3.128}
\sigma_k \big( \id: \ \Fa{}^{s_0}_{p_0, q_0} (\rn) \hra \Fa{}^{s_1}_{p_1,q_1} (\rn) \big) \le c\, k^{-\frac{\theta}{p_0}} = c \, k^{\frac{1}{p_1} - 
\frac{1}{p_0}}, \qquad k \in \nat.
\end{\eq}
The converse is again a matter of special functions consisting of, say, $2k$ terms in \eqref{3.105} with coefficients $k^{-1/p_0}$ and appropriately
located cubes $Q_{j,m}$ in \eqref{3.102}.
\end{proof}

\begin{remark}   \label{R3.20}
As mentioned above best $k$-term approximations both for $B$-spaces and $F$-spaces attracted some attention over the years. We refer the reader to
\cite{Kyr01, HaS11, BCK11} and the literature within. We dealt here exclusively with $F$-spaces where Theorem \ref{T3.19} complements corresponding
assertions in the just-mentioned papers for homogeneous spaces $\dot{F}^s_{p,q} (\rn)$ and inhomogeneous spaces $\Fs (\rn)$ (with the same outcome as in
\eqref{3.125}). As already indicated in Remark \ref{R3.18} Proposition \ref{P3.17} cannot be extended to $\Bas(\rn)$ (or $\dot{B}^s_{p,q} (\rn)$ or
$\Bs (\rn)$) with $p\not=q$. There are counterparts of Theorem \ref{T3.19} for $B$-spaces but one needs additional restrictions for the $q$-parameters
and also modifications of \eqref{3.125} (caused by the observation that there is no $B$-version of \eqref{3.115} for all $q, q_1, q_2$).
\end{remark}

Theorem \ref{T3.19} covers some interesting special cases. This applies in particular to the source spaces
\begin{\eq}  \label{3.129}
F_0 (\rn) = \Fa{}^l_{p_0,2} (\rn) = \Wa{}^l_{p_0} (\rn), \qquad 1<p_0 <\infty, \quad \frac{n}{p_0} >l \in \nat,
\end{\eq}which can be furnished with the equivalent domestic norms
\begin{\eq}    \label{3.130}
\| f \, | \Wa{}^l_{p_0} (\rn) \| = \sum_{|\alpha| =l} \| D^\alpha f \, | L_{p_0} (\rn) \|,
\end{\eq}
\cite[Remark 3.42, p.\,98]{T15} (tempered homogeneous Sobolev  spaces). It is well known that \eqref{3.129}, \eqref{3.130} cannot be extended to 
$p_0 =1$. But related Sobolev spaces attracted a lot of attention. We indicate how these distinguished spaces can be incorporated in our context and 
how Theorem \ref{T3.19} looks like in these cases. According to Theorem \ref{T3.15} the spaces $\Ba{}^s_{1,1} (\rn) = \Fa{}^s_{1,1} (\rn)$ with $0<s<n$
have the equivalent domestic norms \eqref{3.101} with $\bb = \ell_1$. We replace $\ell_1$ in \eqref{3.101} by $\ell_{1,\infty}$, the {\em weak} 
$\ell_1$-space or Marcinkiewicz space (one may consult \cite[Section 1.18.3, pp.\,125--127]{T78}) and denote the outcome by $w\Ba{}^s_{1,1} (\rn)$. 
These spaces can be introduced by weak {\em admissible} modifications of the first term on the right-hand side of \eqref{2.69} having corresponding
{\em domestic} norms as in \eqref{2.87} and, as said, in \eqref{3.101}, \eqref{3.104}.
This will not be done here in detail. Let $n \ge 2$. One has as a by-product
of \cite{CDPX99, CDDD03} for some $c_1 >0$, $c_2 >0$,
\begin{\eq}   \label{3.131}
c_1 \, \| f \, | w\Ba{}^1_{1,1} (\rn) \| \le \sum_{|\alpha|=1} \| D^\alpha f \, | L_1 (\rn) \| \le c_2 \, \| f \, | \Ba{}^1_{1,1} (\rn) \|,
\end{\eq}
$f\in \Ba{}^1_{1,1} (\rn)$. Then $\Wa{}^1_1 (\rn)$ can be introduced as the collection of all $f\in w\Ba{}^1_{1,1} (\rn)$ such that
\begin{\eq}   \label{3.132}
\| f \, | \Wa{}^1_1 (\rn) \| = \sum_{|\alpha|=1} \| D^\alpha f \, | L_1 (\rn) \|
\end{\eq}
is finite. According to \cite[Proposition 3.41, p.\,97]{T15} one has
\begin{\eq}   \label{3.133}
\| f \, | \Aas (\rn) \| \sim \sum_{|\alpha| =m} \| D^\alpha f \, | \Aa{}^{s-m}_{p,q} (\rn) \|
\end{\eq}
within the distinguished strip \eqref{2.67} (equivalent domestic norms). This applies not only to $\Ba{}^l_{1,1} (\rn)$ but also to $w\Ba{}^l_{1,1}
(\rn)$ with $n>l \in \nat$. Then one can extend \eqref{3.131} to
\begin{\eq}   \label{3.134}
c_1 \, \| f \, | w\Ba{}^l_{1,1} (\rn) \| \le \sum_{|\alpha|=l} \| D^\alpha f \, | L_1 (\rn) \| \le c_2 \, \| f \, | \Ba{}^l_{1,1} (\rn) \|,
\end{\eq}
$f\in \Ba{}^l_{1,1} (\rn)$. Afterwards one can introduce $\Wa{}^l_1 (\rn)$ as the collection of all $f\in w\Ba{}^l_{1,1} (\rn)$ such that
\begin{\eq}   \label{3.135}
\| f \, | \Wa{}^l_1 (\rn) \| = \sum_{|\alpha|=l} \| D^\alpha f \, | L_1 (\rn) \|
\end{\eq}
is finite. These observations can be used to extend Theorem \ref{T3.19} to the source spaces $F_0 (\rn) = \Wa{}^l_1 (\rn)$, $n>l \in \nat$. For this
purpose one needs the Gagliardo--Nirenberg inequality
\begin{\eq}   \label{3.136}
\| f\, | \Fa{}^{l- \frac{n}{p'}}_{p,q} (\rn) \| \le c \, \| f \, | \Wa{}^l_1 (\rn) \|^{1/p} \, \| f \, | \Ca{}^{l-n} (\rn) \|^{1/p'}
\end{\eq}
where $1<p<\infty$, $\frac{1}{p} + \frac{1}{p'} =1$, $0<q\le \infty$ and again $n>l \in \nat$. This substitute of \eqref{3.115} may be of interest for
its own sake. The counterpart of \eqref{3.119} is now given by
\begin{\eq}   \label{3.137}
\id: \quad F_0 (\rn) = \Wa{}^l_1 (\rn) \hra F_1 (\rn), \qquad n>l \in \nat,
\end{\eq}
where $F_1 (\rn)$ is either $\Fa{}^{l- \frac{n}{p'}}_{p,q} (\rn)$ with $1<p<\infty$, $\frac{1}{p} + \frac{1}{p'} =1$, $0<q\le \infty$ or $\Ca{}^{l-n}
(\rn)$. Let $\sigma_k (\id)$ be as in \eqref{3.120}, \eqref{3.121}.

\begin{corollary}   \label{C3.21}
Let $n>l \in \nat$, $1<p<\infty$, $\frac{1}{p} + \frac{1}{p'}= 1$, $0<q\le \infty$. Then the embeddings \eqref{3.137} are continuous with \eqref{3.136}.
Furthermore there are two constants $0<c_1 <c_2 <\infty$ such that
\begin{\eq}   \label{3.138}
c_1 \, k^{-1/p'} \le \sigma_k (\id) \le c_2 \, k^{-1/p'}, \qquad k \in \nat,
\end{\eq}
$($with $p'=1$ if $F_1 (\rn) = \Ca{}^{l-n} (\rn))$.
\end{corollary}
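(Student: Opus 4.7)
The plan is to prove continuity first and then adapt the proof of Theorem~\ref{T3.19} to the new source space $\Wa{}^l_1 (\rn)$, replacing the interpolation step \eqref{3.116} by the Gagliardo--Nirenberg inequality \eqref{3.136}. For continuity, I would chain \eqref{3.134} with the elementary diagonal embedding $w\Ba{}^l_{1,1} (\rn) \hra \Ca{}^{l-n} (\rn)$ (a weak-$\ell_1$ bound on the wavelet coefficients of Theorem~\ref{T3.15} trivially implies an $\ell_\infty$ bound, and $\| \cdot \, | \Ca{}^{l-n}(\rn) \| \sim \sup |\lambda^{j,G}_m|$). This yields $\Wa{}^l_1 (\rn) \hra \Ca{}^{l-n} (\rn)$, and together with \eqref{3.136} it gives the embedding into $\Fa{}^{l-n/p'}_{p,q} (\rn)$.

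For the upper bound in the case $F_1 (\rn) = \Ca{}^{l-n} (\rn)$ (so $p'=1$), I would fix $f \in \Wa{}^l_1 (\rn)$ with $\| f \, | \Wa{}^l_1 (\rn) \| \le 1$. By \eqref{3.134}, $\| f \, | w\Ba{}^l_{1,1} (\rn)\| \le C$, so the decreasing rearrangement of the wavelet coefficients (with $r=l-n$) satisfies $\lambda^*_i \le C/i$. The greedy $k$-term approximation $g_k(f)$ then produces $\| f - g_k(f) \, | \Ca{}^{l-n}(\rn)\| \sim \lambda^*_{k+1} \le C/k$. For $F_1 (\rn) = \Fa{}^{l-n/p'}_{p,q}(\rn)$ with $1<p<\infty$, the strategy is to apply \eqref{3.136} to $f - T$ for a suitable $k$-term approximation $T$:
\[ \| f - T \, | \Fa{}^{l-n/p'}_{p,q}(\rn) \| \le c\, \| f - T \, | \Wa{}^l_1 (\rn) \|^{1/p} \| f - T \, | \Ca{}^{l-n}(\rn) \|^{1/p'} \le c'\, k^{-1/p'}, \]
provided $T$ can be chosen so that both $\| f - T \, | \Wa{}^l_1 (\rn)\|$ is uniformly bounded and $\| f - T \, | \Ca{}^{l-n}(\rn)\| \le C/k$ hold simultaneously.

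The main obstacle is securing these two bounds at once: the plain greedy algorithm has $\| g_k(f) \, | \Wa{}^l_1 (\rn)\|$ that can grow logarithmically in $k$ (since $\sum_{i\le k} \lambda^*_i \lesssim \log k$ under weak-$\ell_1$), which would spoil the exponent. Here I would follow the wavelet-thresholding construction of \cite{CDPX99, CDDD03}: threshold the coefficients at a level $\eta \sim 1/k$ and retain only those above. The weak-$\ell_1$ decay forces at most $O(k)$ surviving coefficients, so after adjusting the constant $T$ is a genuine $k$-term approximation; the tail has $\Ca{}^{l-n}$-norm at most $\eta$, and the crucial boundedness of its $\Wa{}^l_1$-norm follows from a Lorentz-type argument based on \eqref{3.131}--\eqref{3.134}, which is precisely the Sobolev-adapted thresholding result behind those references.

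For the lower bound I would use test functions $f = c \sum_{i=1}^{2k} \Psi^{j, r}_{G, m_i}$ at a single fixed scale $j$ with well-separated lattice points $m_i$ so that the supports are essentially disjoint. Since $\| D^\alpha \Psi^{j, l-n}_{G, m} \, | L_1 (\rn) \| \sim 2^{j(|\alpha| - (l-n) - n)} = 1$ for $|\alpha| = l$, one gets $\| f \, | \Wa{}^l_1 (\rn) \| \sim k|c|$, so I take $c \sim 1/k$. Any $k$-term approximation omits at least $k$ of the wavelets $\Psi^{j,r}_{G, m_i}$; computing the residual wavelet-sequence norm in $\fa$ (respectively $\bc$) on essentially disjoint supports yields a residual of order $|c|\, k^{1/p} \sim k^{-1/p'}$ (respectively $|c| \sim k^{-1}$), matching the upper bound.
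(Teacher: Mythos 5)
Your proposal is on the right track up to the key point you yourself flag: applying \eqref{3.136} to $f-T$ requires a uniform bound on $\| f - T \, | \Wa{}^l_1 (\rn) \|$, and this is exactly where the argument breaks down. The resolution you offer — that a thresholding construction \`a la \cite{CDPX99, CDDD03} plus a ``Lorentz-type argument based on \eqref{3.131}--\eqref{3.134}'' gives that bound — is not justified, and I do not believe it is true. From \eqref{3.145} the $\Wa{}^l_1$-quasi-norm sits strictly between the $\ell_1$- and the $\ell_{1,\infty}$-sequence norms, and after discarding the $k$ largest wavelet coefficients the $\ell_{1,\infty}$-norm of the residual is indeed controlled, but the $\ell_1$-norm of the residual is in general infinite (a weak-$\ell_1$ sequence has divergent $\ell_1$-tail). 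Nothing in \eqref{3.131}--\eqref{3.134} converts a weak-$\ell_1$ bound on a residual sequence into a $\Wa{}^l_1$-bound on the corresponding function; the inclusions go the wrong way, and \cite{CDPX99} does not prove such a stability result for the tail.

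The correct move, which is also what the paper does, is to bypass the $\Wa{}^l_1$-quasi-norm of the residual altogether. The inequality one should use is not \eqref{3.136} with $\Wa{}^l_1 (\rn)$ on the right-hand side, but its \emph{weak} sequence-space version coming from the real interpolation
$\ell_p = (\ell_{1,\infty}, \ell_\infty)_{\theta,p}$, $1-\theta = 1/p$, namely
$\| \mu \, | \ell_p \| \le c \, \| \mu \, | \ell_{1,\infty} \|^{1/p} \, \| \mu \, | \ell_\infty \|^{1/p'}$,
applied to the residual coefficient sequence $\mu = \lambda(f) - \lambda(T)$. Removing terms from a wavelet expansion does not increase the $\ell_{1,\infty}$-norm, so $\| \mu \, | \ell_{1,\infty} \| \le \| \lambda(f) \, | \ell_{1,\infty} \| \le c \, \| f \, | w\Ba{}^l_{1,1} (\rn) \| \le c' \, \| f \, | \Wa{}^l_1 (\rn) \|$ by \eqref{3.134}, while $\| \mu \, | \ell_\infty \| \le c k^{-1}$ from Step~1. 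Together with the wavelet characterization of Theorem~\ref{T3.15} this gives $\| f - T \, | \Ba{}^{l-n/p'}_{p,p} (\rn) \| \le c k^{-1/p'}$, which is \eqref{3.144}; the function-level inequality \eqref{3.143} is then only a byproduct (the special case $T=0$), not the estimate used for $\sigma_k$. The extension to general $q$ and $p$ then follows exactly as you sketch, iterating with the $F$-space Gagliardo--Nirenberg inequality \eqref{3.116}. Your treatment of the continuity of the embeddings, of the upper bound for $F_1 (\rn) = \Ca{}^{l-n} (\rn)$, and of the lower bound via essentially disjoint single-scale wavelet sums is fine; only the Gagliardo--Nirenberg step for the intermediate target spaces needs the repair above.
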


\begin{proof} (Outline) {\em Step 1.} By Theorem \ref{T3.19} one has
\begin{\eq}   \label{3.139}
\sigma_k (\id) \sim k^{-1} \qquad \text{if} \quad \id: \ \Ba{}^l_{1,1} (\rn) \hra \Ca{}^{l-n} (\rn).
\end{\eq}
Recall that the supremum over $\ve_k k$ in \eqref{3.126} gives the $\ell_{1,\infty}$-norm in question, \cite[Lemma 1.18.6, p.\,132]{T78}, \cite[Theorem
3.15, p.\,69]{T15}. 
Then the arguments in Step 1 of the proof of Theorem \ref{T3.19} apply also to $w\Ba{}^l_{1,1} (\rn)$ resulting in
\begin{\eq}   \label{3.140}
\sigma_k (\id) \sim k^{-1} \qquad \text{if} \quad \id: \ w\Ba{}^l_{1,1} (\rn) \hra \Ca{}^{l-n} (\rn).
\end{\eq}
Now one has by \eqref{3.134}
\begin{\eq}   \label{3.141}
\sigma_k (\id) \sim k^{-1} \qquad \text{if} \quad \id: \ \Wa{}^l_1 (\rn) \hra \Ca{}^{l-n} (\rn).
\end{\eq}
This proves \eqref{3.138} with $p' =1$ in case of $F_1 (\rn) = \Ca{}^{l-n} (\rn).$
\\[0.1cm]
{\em Step 2.} By Theorem \ref{T3.15}  one has for $\Ba{}^l_{1,1} (\rn)$ the equivalent norm \eqref{3.101} with $\bb = \ell_1$ and for $\Ca{}^{l-n} 
(\rn)$ the equivalent norm \eqref{3.101} with $\bc =\ell_\infty$. Recall the real interpolation
\begin{\eq}   \label{3.142}
\ell_p = (\ell_1, \ell_\infty )_{\theta,p} = (\ell_{1,\infty}, \ell_\infty )_{\theta, p}, \qquad 1<p<\infty, \quad 1- \theta = \frac{1}{p},
\end{\eq}
\cite[Section 1.18.3, pp.\,125--127]{T78} (extended to $\ell_{1,\infty}$). But $\ell_p = \bd$ is an equivalent norm for $\Ba{}^{l- \frac{n}{p'}}_{p,p}
(\rn)$. This proves \eqref{3.137} with $q=p$ and by \eqref{3.134} also
\begin{\eq}   \label{3.143}
\| f \, | \Ba{}^{l - \frac{n}{p'}}_{p,p} (\rn)\| \le c \, \| f \, | \Wa{}^l_1 (\rn) \|^{1/p} \, \| f \, | \Ca{}^{l-n} (\rn) \|^{1/p'}.
\end{\eq}
By Step 1 and the counterpart of \eqref{3.128} one has
\begin{\eq}   \label{3.144}
\sigma_k \big( \id: \ \Wa{}^l_1 (\rn) \hra \Ba{}^{l- \frac{n}{p'}}_{p,p} (\rn) \big) \le c\, k^{-1/p'}, \qquad k \in \nat.
\end{\eq}
Recall that $\Wa{}^l_1 (\rn)$ is sandwiched between $\Ba{}^l_{1,1} (\rn)$ and $w\Ba{}^l_{1,1} (\rn)$,
\begin{\eq}   \label{3.145}
\Ba{}^l_{1,1} (\rn) \hra \Wa{}^l_1 (\rn) \hra w\Ba{}^l_{1,1} (\rn).
\end{\eq}
Then the converse of \eqref{3.144} follows from Theorem \ref{T3.19} applied to $\Ba{}^l_{1,1} (\rn)$. This proves \eqref{3.137} with $F_1 (\rn) =
\Fa{}^{l- \frac{n}{p'}}_{p,p} (\rn) = \Ba{}^{l- \frac{n}{p'}}_{p,p} (\rn)$ and also \eqref{3.138}.
\\[0.1cm]
{\em Step 3.} Let $1<p_0 <p<\infty$ and $0<q\le \infty$. Then one has by \eqref{3.116} with $\frac{1}{p} = \frac{1-\theta}{p_0}$
\begin{\eq}    \label{3.146}
\| f \, | \Fa{}^{l- \frac{n}{p'}}_{p,q} (\rn) \| \le c\, \| f \, | \Ba{}^{l- \frac{n}{p'_0}}_{p_0, p_0} (\rn) \|^{1-\theta} \, \| f \, | \Ca{}^{l-n} (\rn)\|^\theta.
\end{\eq}
Inserting \eqref{3.143} with $p_0$ in place of $p$ in \eqref{3.146} one obtains \eqref{3.137}, \eqref{3.136} and the right-hand side of \eqref{3.138}.
The left-hand side is again covered by \eqref{3.145} and Theorem \ref{T3.19}.
\end{proof}

\begin{remark}   \label{R3.22}
Some special cases of \eqref{3.136} are already known. They have some history. In particular if $p= \frac{n}{n-l}$ then $\frac{n}{p'} =l$ and
\begin{\eq}   \label{3.147}
\| f \, | L_{\frac{n}{n-l}} (\rn) \| \le c \, \| f \, | \Wa{}^l_1 (\rn) \|^{\frac{n-l}{n}} \, \| f \, | \Ca{}^{l-n} (\rn) \|^{\frac{l}{n}},
\end{\eq}
$n>l\in \nat$. With $l=1$ one obtains
\begin{\eq}   \label{3.148}
\| f \, | L_{\frac{n}{n-1}} (\rn) \| \le c \, \| f \, | \Wa{}^1_1 (\rn) \|^{\frac{n-1}{n}} \, \| f \, | \Ca{}^{1-n} (\rn) \|^{\frac{1}{n}}
\end{\eq}
and if, in addition, $n=2$, then one has
\begin{\eq}   \label{3.149}
\| f \, | L_2 (\real^2) \| \le c \, \| f \, | \Wa{}^1_1 (\real^2) \|^{1/2} \, \| f \, | \Ca{}^{-1} (\real^2) \|^{1/2}.
\end{\eq}
Assertions of this type (with $l=1$) go back to \cite{CMO98} and have been generalized in \cite{CDPX99, CDDD03} including also some Besov spaces. 
Further references and discussions may be found in \cite[Section 4.1, pp.\,131--133]{T13}.
\end{remark}

\end{document}